\theoremstyle{plain}
\newtheorem{theorem}{Theorem}[section]
\newtheorem{thm}[theorem]{Theorem}
\newtheorem{prop}[theorem]{Proposition}
\newtheorem{lem}[theorem]{Lemma}
\newtheorem{conj}[theorem]{Conjecture}
\newtheorem{defi}[theorem]{Definition}
\newtheorem{example}[theorem]{Example}
\newcommand{\Glie}{\mathfrak{g}}
\newcommand{\Yim}{\mathcal{Y}}
\newcommand{\ZZ}{\mathbb{Z}}
\newcommand{\CC}{\mathbb{C}}
\newcommand{\QQ}{\mathbb{Q}}
\newcommand{\C}{\mathbb{C}}
\newcommand{\Q}{\mathbb{Q}}
\newcommand{\Z}{\mathbb{Z}}
\newcommand{\g}{\mathfrak{g}}
\newcommand{\bo}{\mathfrak{b}}
\newcommand{\tb}{\mathbf{\mathfrak{t}}}
\newcommand{\ga}{\overline{\alpha}}
\newcommand{\Psib}{\mbox{\boldmath$\Psi$}}
\newcommand{\Psibs}{\scalebox{.7}{\boldmath$\Psi$}}
\newcommand{\qbin}[2]{{\left[
\begin{matrix}{\,\displaystyle #1\,}\\
{\,\displaystyle #2\,}\end{matrix}
\right]
}}
\newcommand{\nc}{\newcommand}
\nc{\on}{\operatorname}
\nc{\la}{\lambda}
\nc{\wh}{\widehat}
\nc{\wt}{\widetilde}
\nc{\sw}{{\mathfrak s}{\mathfrak l}}
\nc{\ghat}{\wh{\g}}
\nc{\hhat}{\wh{\h}}
\nc{\mc}{\mathcal}
\nc{\bi}{\bibitem}
\nc{\pa}{\partial}
\nc{\ppart}{(\!(t)\!)}
\nc{\pparl}{(\!(\la)\!)}
\nc{\zpart}{(\!(z^{-1})\!)}
\nc{\n}{{\mathfrak n}}
\nc{\ol}{\overline}
\nc{\mb}{\mathbf}
\nc{\bb}{{\mathfrak b}}
\nc{\su}{\wh\sw_2}
\nc{\h}{{\mathfrak h}}
\nc{\can}{\on{can}}
\nc{\ntil}{\wt{\n}}
\nc{\pone}{{\mathbb P}^1}
\nc{\bs}{\backslash}
\nc{\al}{\alpha}
\nc{\gt}{{\mathfrak g}'}
\nc{\ds}{\displaystyle}
\theoremstyle{definition}
\newtheorem{rem}{Remark}[section]
\begin{document}

\begin{title}
{Spectra of quantum KdV Hamiltonians, Langlands
  duality, and affine opers}
\end{title}

\author{Edward Frenkel}

\address{Department of Mathematics, University of California,
  Berkeley, CA 94720, USA}

\author[David Hernandez]{David Hernandez}

\address{Sorbonne Paris Cit\'e, Univ Paris Diderot, CNRS Institut de
  Math\'ematiques de Jussieu-Paris Rive Gauche UMR 7586, B\^atiment
  Sophie Germain, Case 7012, 75205 Paris Cedex 13, France.}

\begin{abstract}
  We prove a system of relations in the Grothendieck ring of the
  category ${\mc O}$ of representations of the Borel subalgebra of an
  untwisted quantum affine algebra $U_q(\ghat)$ introduced in
  \cite{HJ}. This system was discovered, under the name
  $Q\wt{Q}$-system, in \cite{MRV1,MRV2}, where it was shown that
  solutions of this system can be attached to certain $^L\ghat$-affine
  opers, introduced in \cite{FF:sol}, where $^L\ghat$ is the Langlands
  dual affine Kac--Moody algebra of $\ghat$. Together with the results
  of \cite{BLZ3,BHK} which enable one to associate quantum $\ghat$-KdV
  Hamiltonians to representations from the category ${\mc O}$, this
  provides strong evidence for the conjecture of \cite{FF:sol} linking
  the spectra of quantum $\ghat$-KdV Hamiltonians and $^L\ghat$-affine
  opers. As a bonus, we obtain a direct and uniform proof of the Bethe
  Ansatz equations for a large class of quantum integrable models
  associated to arbitrary untwisted quantum affine algebras, under a
  mild genericity condition. We also conjecture analogues of these
  results for the twisted quantum affine algebras and elucidate the
  notion of opers for twisted affine algebras, making a connection to
  twisted opers introduced in \cite{FG}.
\end{abstract}

\maketitle

\setcounter{tocdepth}{1}
\tableofcontents

\setcounter{tocdepth}{1}

\section{Introduction}

The purpose of this paper is two-fold. First, it is to elucidate the
link, proposed in \cite{FF:sol}, between (i) the spectra of the {\em
  quantum $\ghat$-KdV Hamiltonians} acting on highest weight
representations of ${\mc W}$-algebras, and (ii) {\em affine opers} --
differential operators in one variable associated to $^L\ghat$, the
affine Kac--Moody algebra that is {\em Langlands dual} to $\ghat$. And
second, it is to prove a system of relations in the Grothendieck ring
$K_0({\mc O})$ of the category ${\mc O}$ of representations of the
Borel subalgebra of the quantum affine algebra $U_q(\ghat)$,
introduced in \cite{HJ}. These relations generalize the ``quantum
Wronskian relation'' in the case of $\ghat=\wh{\sw}_2$ described in
\cite{BLZ3}. As far as we know, for general affine algebras $\ghat$
these relations, viewed as relations in $K_0({\mc O})$, are new. Among
other things, these relations enable one to quickly derive the Bethe
Ansatz equations in a uniform fashion and under minimal assumptions.

Remarkably, these relations were discovered on the other side of the
above KdV-oper correspondence -- namely, in the domain of affine opers
-- in the striking recent papers \cite{MRV1, MRV2} by Masoero,
Raimondo, and Valeri, which provided a catalyst for the present
work. Following \cite{MRV1, MRV2}, we call these relations the
$Q\wt{Q}$-{\em system}.\footnote{This $Q\wt{Q}$-system should not be
  confused with the $Q$-system satisfied by (ordinary) characters of
  Kirillov--Reshetikhin modules, see \cite{HCrelle} and references
  therein.} According to \cite{MRV1,MRV2}, solutions of the
$Q\wt{Q}$-system can be attached to affine $^L\ghat$-opers of special
kind -- precisely the simplest $^L\ghat$-affine opers proposed in
\cite{FF:sol} to describe the spectra of the quantum $\ghat$-KdV
Hamiltonians. Now let's look on the KdV side: following the
construction of Bazhanov, e.a. \cite{BLZ2,BLZ3,BHK}, one can attach
non-local quantum $\ghat$-KdV Hamiltonians to elements of $K_0({\mc
  O})$. Therefore, our results (in the present paper) imply that the
spectra of these Hamiltonians should also yield solutions of this
$Q\wt{Q}$-system. Thus, we obtain strong evidence for the conjecture
of \cite{FF:sol} linking the spectra of these Hamiltonians to
$^L\ghat$-opers:

\medskip

$$
\xymatrix{\boxed{\begin{matrix} \text{spectra of quantum} \\
      \text{$\ghat$-KdV
      Hamiltonians} \end{matrix}} \ar[rd] \ar[rr] & &
\boxed{\begin{matrix} \text{$^L\ghat$-affine} \\
    \text{opers} \end{matrix}} \ar[ll]
 \ar[ld] \\
& \boxed{\begin{matrix}\text{solutions of} \\ \text{the
      $Q\wt{Q}$-system} \end{matrix}} &
}
$$

\bigskip

We now explain the interrelations between these topics in more detail.

In the case of $\ghat = \wh{\sw}_2$ the link between the spectra of
the quantum $\ghat$-KdV Hamiltonians and ordinary differential
operators (the precursors of affine opers of \cite{FF:sol}) was
discovered and investigated in \cite{DT,BLZ4,BLZ}. It was further
generalized and studied in the case of $\ghat = \wh{\sw}_3$ in
\cite{BHK}, and $\ghat = \wh{\sw}_r$ in \cite{DDT,Dorey1}.

Motivated by those works, Feigin and one of the authors of the present
paper interpreted in \cite{FF:sol} the quantum $\ghat$-KdV integrable
system as a generalization of the Gaudin model, in which a simple Lie
algebra $\g$ is replaced by the affine Kac--Moody algebra $\ghat$. It
is known \cite{FFR,F:icmp,FFT} that the spectra of the Hamiltonians of
the Gaudin model associated to $\g$ can be encoded by differential
operators known as $^L\g$-opers, where $^L\g$ is the Langlands dual
Lie algebra of $\g$. (This follows from an isomorphism between the
center of the completed enveloping algebra of $\ghat$ at the critical
level and the algebra of functions on $^L\g$-opers on the formal disc
\cite{FF:gd,F:book}.)  Therefore, by analogy with the simple Lie
algebra case, it was conjectured in \cite{FF:sol} that the spectra of
the quantum $\ghat$-KdV Hamiltonians should be encoded by what was
called in \cite{FF:sol} the affine opers associated to the Langlands
dual {\em affine} algebra $^L\g$, or $^L\ghat$-affine opers
($^L\ghat$-opers for short). Moreover, it was shown in
\cite{FF:sol} that this proposal is consistent with the results
obtained in the papers cited above for $\ghat = \wh{\sw}_r =
{}^L\ghat$. In the present paper we give more details on the structure
of the relevant $^L\ghat$-opers in the case that $^L\ghat$ is a
twisted affine algebra, making a connection to the twisted opers
introduced in \cite{FG} (Section \ref{nslcase}).

Already in the pioneering works \cite{DT,BLZ4,DT1,DDT,BHK} various
systems of functional equations were constructed for $\ghat =
\wh{\sw}_r$ with the property that its solution could be attached to
an $r$th order differential operator of a special kind. From the
perspective of \cite{FF:sol}, those differential operators are the
same as $\wh{\sw}_r$-affine opers corresponding to the highest weight
vectors for $r>2$, and to all eigenvectors for $r=2$. On the other
hand, the eigenvalues of certain non-local quantum $\wh{\sw}_r$-KdV
Hamiltonians were shown to satisfy the same relations; namely, for
$\wh{\sw}_2$, in \cite{BLZ4,BLZ}; for $\wh{\sw}_3$, in \cite{BHK}; and
for $\wh{\sw}_r$, in \cite{Ko}. Therefore, in the case of $\ghat =
\wh{\sw}_r$ these relations provided a link between the spectra of
quantum $\ghat$-KdV Hamiltonians and $^L\ghat$-affine opers (because
$^L\wh{\sw}_r = \wh{\sw}_r$).

Unfortunately, these systems were either analogues of the Baxter's
$TQ$-relation -- and so they involved the classes of finite-dimensional
representations of $U_q(\ghat)$, with the number of terms in the
relation growing as the dimensions of those representations -- or
Wronskian-type relations with the number of terms growing as
$r!$. Analogues of such systems were unknown for a general affine
algebra $\ghat$, and this impeded further progress in understanding
the link between the spectra of quantum $\ghat$-KdV Hamiltonians and
$^L\ghat$-affine opers, beyond the case of $\widehat{\sw}_r$.

That's why an elegant and uniform $Q\wt{Q}$-system proposed for an
arbitrary untwisted affine Kac--Moody algebra $\ghat$ in the papers
\cite{MRV1,MRV2} is such an important development. However, the
$Q\wt{Q}$-system was constructed in \cite{MRV1,MRV2} only on the
affine oper side of the KdV-oper correspondence. In fact, it was shown
in \cite{MRV1,MRV2} that solutions of the $Q\wt{Q}$-system can be
attached to the simplest $^L\ghat$-affine opers of the kind proposed
in \cite{FF:sol} (those are in fact the $^L\ghat$-affine opers that
are supposed to encode the eigenvalues of the quantum $\ghat$-KdV
Hamiltonians on a highest weight vector of a representation of the
${\mc W}$-algebra). We note that some partial results in this
direction were obtained earlier in \cite{S}.

If the solutions of this $Q\wt{Q}$-system could also be constructed
using the methods of \cite{MRV1,MRV2} for more general
$^L\ghat$-affine opers from \cite{FF:sol} and the present paper (which
are supposed to encode other eigenvalues of the $\ghat$-KdV
Hamiltonians on representations of ${\mc W}$-algebras), this would
open the possibility of using this $Q\wt{Q}$-system to establish the
link between the spectra of the quantum $\ghat$-KdV Hamiltonians and
$^L\ghat$-affine opers proposed in \cite{FF:sol} (analogously to the
link established in \cite{BLZ} in the case of $\wh\sw_2$).

\vspace*{4mm}

However, in order to do that, we first need to understand the meaning
of the $Q\wt{Q}$-system on the side of the quantum KdV
Hamiltonians. In other words, we need to answer the following
question: Can a solution of the $Q\wt{Q}$-system from \cite{MRV1,MRV2}
be attached to each joint eigenvector of the quantum $\ghat$-KdV
Hamiltonians?

In this paper we show that in fact the $Q\wt{Q}$-system of
\cite{MRV1,MRV2} is a {\em universal system} of relations in the
(commutative) Grothendieck ring $K_0({\mc O})$ of the category ${\mc
  O}$ of representations of the Borel subalgebra of the quantum affine
algebra $U_q(\ghat)$ introduced by Jimbo and one of the authors in
\cite{HJ}. Since one can attach non-local quantum $\ghat$-KdV
Hamiltonians to elements of $K_0({\mc O})$ using the construction of
\cite{BLZ2,BLZ3,BHK} (modulo some convergence issues discussed in
Section \ref{nlham} below), this gives us a way to attach a solution
of the $Q\wt{Q}$-system to each joint eigenvector of the quantum
$\ghat$-KdV Hamiltonians in a representation of the corresponding
${\mc W}$-algebra.

An interesting aspect of the $Q\wt{Q}$-system is that it involves two
sets of variables, denoted in \cite{MRV1,MRV2} by $Q_i$ and
$\wt{Q}_i$, where $i$ runs over the set of simple roots of $\g$. The
challenge is then to find the corresponding two sets of
representations of $U_q(\ghat)$ from the category ${\mc O}$ whose
classes satisfy the $Q\wt{Q}$-system.

We find these representations in the present paper. Namely, the first
set of representations, corresponding to the $Q_i$, are the
representations denoted by $L^+_{i,a}$ and called prefundamental in
\cite{FH}. They were first constructed for $\ghat = \wh{sl}_2$ in
\cite{BLZ2,BLZ3}, for $\ghat = \wh{sl}_3$ in \cite{BHK}, and for
$\ghat = \wh{sl}_{n+1}$ with $i = 1$ in \cite{Ko}. For general
$\ghat$, these representations were introduced and studied in
\cite{HJ}, and they were further investigated in \cite{FH}.

The representations of the second set, corresponding to the
$\wt{Q}_i$, have not been previously studied for general affine
algebras, as far as we know. We denote these representations by
$X_{i,a}$ in Section \ref{QwtQ}. If $L^+_{i,a}$ in some sense
corresponds to the $i$th fundamental weight $\omega_i$ of $\g$, then
$X_{i,a}$ corresponds to the weight $\omega_i-\al_i$. We prove that
together, $Q_i=[L^+_{i,a}]$ and $\wt{Q}_i=[X_{i,a}]$ satisfy the
$Q\wt{Q}$-system of \cite{MRV1,MRV2}, up to some scalar multiples
(which are inessential normalization constants from the point of view
of the eigenvalues of the quantum KdV Hamiltonians).

Thus, we prove that the $Q\wt{Q}$-system of \cite{MRV1,MRV2} appears
naturally in representation theory of the Borel subalgebra of the
quantum affine algebra $U_q(\ghat)$ for an arbitrary untwisted affine
Kac--Moody algebra $\ghat$. Furthermore, we conjecture an analogous
$Q\wt{Q}$-system in the Grothendieck ring $K_0({\mc O})$ for an
arbitrary twisted affine Kac--Moody algebra $\ghat$ (Conjecture
\ref{QQ t}), as well as an analogue of the result of \cite{MRV1,MRV2}
that solutions of this system can be attached to $^L\ghat$-opers
(Conjecture \ref{tw aff op}).

It follows from out results that the $Q\wt{Q}$-system arises whenever
there is an action of the Grothendieck ring $K_0({\mc O})$ on a vector
space, as the relation between the joint eigenvalues of the commuting
operators corresponding to the classes of the representations
$L^+_{i,a}$ and $X_{i,a}$ in $K_0({\mc O})$. For instance, let $V$ be
the tensor product a finite number of irreducible finite-dimensional
representation of $U_q(\ghat)$. The well-known transfer-matrix
construction yields an action of $K_0({\mc O})$ on $V$ (see
\cite{FH}). Therefore we obtain that the joint eigenvalues of the
transfer-matrices corresponding to $L^+_{i,a}$ and $X_{i,a}$ satisfy
the $Q\wt{Q}$-system. There is also a similar relation in the dual
category ${\mc O}^*$, in which the role of $L^+_{i,a}$ is played by
its dual representation denoted by $R^+_{i,a}$ in
\cite{FH}. Furthermore, in \cite{FH} it was proved that every
eigenvalue of the transfer-matrix of $R^+_{i,a}$ on such a $V$ is
equal, up to a common scalar factor that depends only on $V$ and $i$,
to a polynomial in the spectral parameter (this was originally
conjectured in \cite{Fre}). These polynomials are the generalizations
of the celebrated Baxter's polynomials (see \cite{FH} for more details
and references).

\vspace*{4mm}

As an immediate consequence of this $Q\wt{Q}$-system, one can derive a
system of equations on the roots of these polynomials, which are
nothing but the {\em Bethe Ansatz} equations for the quantum
integrable systems associated to $U_q(\ghat)$. Previously, for a
general $\ghat$ these equations were essentially guessed from the
relations between these eigenvalues and the eigenvalues of the
transfer-matrices corresponding to finite-dimensional representations
of $U_q(\ghat)$, as explained in \cite{Fre,FH}; these relations are
generalizations of Baxter's $TQ$-relation (see, e.g., \cite{FH}). This
argument, which goes back to Reshetikhin's analytic Bethe Ansatz
method \cite{R1,R2,R3} (see also \cite{BR,KS}), gives strong evidence
for the Bethe Ansatz equations, but short of a proof. On the other
hand, the $Q\wt{Q}$-system gives us a direct proof (albeit under a
genericity assumption) of the Bethe Ansatz equations for the roots of
the generalized Baxter polynomials arising from every joint
eigenvector of the transfer-matrices on $V$. This is explained in
Section \ref{bethe} below, following \cite{MRV1,MRV2}.

The non-local quantum $\ghat$-KdV Hamiltonians give us another example
of an action of the Grothendieck ring $K_0({\mc O})$ via the
construction of \cite{BLZ2,BLZ3,BHK} -- in this case, on
representations of the corresponding ${\mc W}$-algebra (modulo the
convergence issues discussed in Section \ref{nlham}). Thus, for each
common eigenvector of the quantum $\ghat$-KdV Hamiltonians we also
obtain a solution of the above $Q\wt{Q}$-system. In this case, the
$Q_i$ and $\wt{Q}_i$, viewed as functions of the spectral parameter,
are expected to be entire functions on the complex plane with a
particular asymptotic behavior at infinity
\cite{BLZ2,BLZ3,BLZ4,BLZ,BHK}. The corresponding Bethe Ansatz
equations are then the equations on the positions of the zeros of the
functions $Q_i$. In the case of $\ghat=\wh{\sw}_r$ these equations are
equivalent to the ones that have been previously considered in the
literature.

In a similar way, solutions of the $Q\wt{Q}$-system can be attached to
joint eigenvalues of the non-local quantum Hamiltonians of the ``shift
of argument'' affine Gaudin model introduced in \cite[Section
3]{FF:laws}. The zeros of the corresponding functions $Q_i$ satisfy
the same Bethe Ansatz equations, but these functions have analytic
properties different from the analytic properties of the functions
$Q_i$ corresponding to the joint eigenvalues of the non-local quantum
$\ghat$-KdV Hamiltonians.

We also want to note that Baxter's $Q$-operators $Q_i$ arise in other
important quantum integrable models, such as the ones studied in
\cite{NPS} that appear in the $\Omega$-deformations of the
five-dimensional supersymmetric quiver gauge theories. It is natural
to expect that the construction of these $Q$-operators can be extended
to the entire Grothendieck ring $K_0({\mc O})$. Then the operators
$\wt{Q}_i$ can be constructed in those models as well, so that
together with the Baxter's operators $Q_i$ they satisfy the
$Q\wt{Q}$-system. The corresponding Bethe Ansatz equations may then be
derived from the $Q\wt{Q}$-system.

\vspace*{5mm}

As we mentioned above, according to \cite{MRV1,MRV2}, a solution of
the $Q\wt{Q}$-system can be obtained from special $^L\ghat$-affine
opers; namely, the ones that correspond to highest weight vectors of
the representations of the ${\mc W}$-algebra (they are automatically
eigenvectors of the quantum $\ghat$-KdV Hamiltonians because the
Virasoro operator $L_0$ is one of these Hamiltonians). Thus, the
$Q\wt{Q}$-system links a $^L\ghat$-affine oper of this kind and the
joint eigenvalues of the $\ghat$-KdV Hamiltonians on the highest
weight vector. (This generalizes the earlier results for $\wh\sw_r$
\cite{DT,BLZ4,DT1,DDT,BHK,BLZ}).

Davide Masoero and Andrea Raimondo informed us that they expect that
their construction can be generalized to the more general
$^L\ghat$-affine opers from \cite{FF:sol} (see also Section \ref{kdv
  quantum} below) that were conjectured to correspond to other
eigenvectors of the quantum $\ghat$-KdV Hamiltonians. If this is
indeed the case, then the $Q\wt{Q}$-system will provide a link between
the $^L\ghat$-affine opers and eigenvalues of the quantum $\ghat$-KdV
Hamiltonians (similarly to the case of $\wh\sw_2$ in \cite{BLZ}).

Finally, we note that an analogue of the $Q\wt{Q}$-system also exists
for the quantum ${\mathfrak g}{\mathfrak l}_1$ toroidal algebra. We
present it in Section \ref{toroidal}. We also note that for quantum
affine algebras another system of relations in $K_0({\mc O})$ was
established in \cite{HL}. It arises naturally from a cluster algebra
structure introduced in \cite{HL}, as the first step of the
Fomin--Zelevinsky mutation relations. We call it the $QQ^*$-system and
discuss it in Section \ref{QQstar}. This system gives rise to the same
Bethe Ansatz equations as the $Q\wt{Q}$-system (see Section
\ref{bethe}). Furthermore, a system analogous to the $QQ^*$-system has
been recently defined in \cite{FJMM} for the quantum ${\mathfrak
  g}{\mathfrak l}_1$ toroidal algebra. The corresponding Bethe Ansatz
equations were also proved in \cite{FJMM} (unconditionally). We show
in Section \ref{bethe} that the same Bethe Ansatz equations also
follow from the $Q\wt{Q}$-system of Section \ref{toroidal} (under a
genericity assumption).

\vspace*{5mm}

The paper is organized as follows. In Section \ref{back}, we present
the necessary definitions and results concerning quantum affine
algebras and the category ${\mc O}$. In Section \ref{QwtQ} we
introduce the irreducible representations $X_{i,a}$ and state the
$Q\wt{Q}$-system (Theorem \ref{relation}). We describe
\footnote{Vladimir Bazhanov drew out attention to a system of
  relations in the case of $\sw_{n+1}$ which were introduced in
  \cite[Equation (1.3)]{BFLMS}. Moreover, a referee gave us an
  explicit comparison between this system and the $Q\tilde{Q}$-system,
  see Remark \ref{exbf} below for more details.} the examples of the
$Q\wt{Q}$-system for $\g=\sw_2$ and $\sw_3$, connecting the
$Q\wt{Q}$-system in these cases to relations found in earlier works
\cite{BLZ3,BHK}. We also conjecture an analogue of the
$Q\wt{Q}$-system for twisted affine algebras (Section \ref{QQ
  twisted}), state the $QQ^*$-system for quantum affine algebras
(Section \ref{QQstar}) and an analogue of the $Q\wt{Q}$-system for the
quantum ${\mathfrak g}{\mathfrak l}_1$ toroidal algebra (Section
\ref{toroidal}). In Section \ref{proof} we prove the $Q\wt{Q}$-system
for untwisted affine algebras using the theory of $q$-characters.  We
then derive the Bethe Ansatz equations from the $Q\wt{Q}$-system in
Section \ref{bethe} and discuss applications of the Bethe Ansatz
equations in various situations.

After that, we shift our focus to the KdV system. In Section
\ref{ckdv} we recall the definition of the classical KdV system and
the corresponding spaces of opers for both twisted and untwisted
affine algebras. Then we discuss the quantization of the KdV
Hamiltonians in Section \ref{qkdv}. We explain the construction of
\cite{BLZ1,BLZ2,BLZ3,BHK} assigning non-local quantum KdV Hamiltonians
to elements of $K_0({\mc O})$. In Section \ref{duality} we state
Conjecture \ref{commute} that the quantum $\ghat$- and $^L\ghat$-KdV
Hamiltonians commute with each other. If true, this should yield a
somewhat surprising correspondence between solutions of the
$Q\wt{Q}$-systems (as well as other equations stemming from $K_0({\mc
  O})$ such as the $QQ^*$-system) for $U_q(\ghat)$ and
$U_{\check{q}}({}^L\ghat)$, where $q=e^{\pi i \beta^2}$ and $\check{q}
= e^{\pi i \check{r}/\beta^2}$.

In Section \ref{kdv quantum} we discuss and give more details on the
conjecture of \cite{FF:sol} linking the spectra of quantum $\ghat$-KdV
Hamiltonians and $^L\ghat$-opers of a certain kind, elucidating a
number of points, including a more precise interpretation of the
$^L\ghat$-opers in the non-simply laced case. We also discuss the
results of \cite{MRV1,MRV2} associating a solution of the
$Q\wt{Q}$-system (for $U_q(\ghat)$) to the simplest $^L\ghat$-opers of
this kind and conjecture a generalization of these results to the case
of twisted affine algebras $\ghat$ (Section \ref{QQ tw aff}). Finally,
we discuss a conjectural duality between $\ghat$- and $^L\ghat$-affine
opers (Section \ref{duality opers}) and comment on the appearance of
opers associated to two Langlands dual Lie algebras in the classical
and the quantum pictures, which still largely remains a mystery
(Section \ref{last}).

\vspace*{4mm}

\noindent{\bf Acknowledgments.} We are grateful to Davide Masoero and
Andrea Raimondo for fruitful discussions and explanations about their
work \cite{MRV1,MRV2}, which was the main motivation for this
paper. We also thank them for raising a question about the meaning of
our formula for $^L\ghat$-opers in the non-simply laced case, which
helped us to formulate it more precisely (Section \ref{nslcase}).

We thank Bernard Leclerc for his comments on the first version of this
paper, and the referees for their thorough reading of the paper and
useful comments.

E. Frenkel was supported by the NSF grant DMS-1201335. D. Hernandez
was supported in part by the European Research Council under the
European Union's Framework Programme H2020 with ERC Grant Agreement
number 647353 QAffine.

\section{Background on quantum affine algebras}    \label{back}

In this section we collect some definitions and results on quantum
affine algebras and their representations. We refer the reader to
\cite{CP} for a canonical introduction, and to \cite{CH,L} for more
recent surveys on this topic. We also discuss representations of the
Borel subalgebra of a quantum affine algebra, see \cite{HJ, FH} for
more details.

\subsection{Quantum affine algebras and Borel algebras}

Let $C=(C_{i,j})_{0\leq i,j\leq n}$ be an indecomposable Cartan matrix
of untwisted affine type.  We denote by $\wh{\Glie}$ the Kac--Moody
Lie algebra associated with $C$.  Set $I=\{1,\ldots, n\}$, and denote
by $\Glie$ the finite-dimensional simple Lie algebra associated with
the Cartan matrix $(C_{i,j})_{i,j\in I}$.  Let $\{\alpha_i\}_{i\in
  I}$, $\{\alpha_i^\vee\}_{i\in I}$, $\{\omega_i\}_{i\in I}$,
$\{\omega_i^\vee\}_{i\in I}$, and ${\mathfrak{h}}$ be the simple
roots, the simple coroots, the fundamental weights, the fundamental
coweights, and the Cartan subalgebra of $\Glie$, respectively.  We set
$Q=\oplus_{i\in I}\Z\alpha_i$, $Q^+=\oplus_{i\in I}\Z_{\ge0}\alpha_i$,
$P=\oplus_{i\in I}\Z\omega_i$.  Let $D=\mathrm{diag}(d_0\ldots, d_n)$
be the unique diagonal matrix such that $B=DC$ is symmetric and
$d_i$'s are relatively prime positive integers.  We will also use
$P_\Q = P\otimes \Q$ with its partial ordering defined by $\omega\leq
\omega'$ if and only if $\omega'-\omega\in Q^+$.  We denote by
$(~,~):Q\times Q\to\Z$ the invariant symmetric bilinear form such that
$(\alpha_i,\alpha_i)=2d_i$.  We use the numbering of the Dynkin
diagram as in \cite{ka}.  Let $a_0,\cdots,a_n$ stand for the Kac
labels (\cite{ka}, pp.55-56). We have $a_0 = 1$ and we set $\alpha_0 =
-(a_1\alpha_1 + a_2\alpha_2 + \cdots + a_n\alpha_n)$.  We set
$$i\sim j\text{ if }C_{i,j} < 0.$$

Throughout this paper, we fix a non-zero complex number $q$ which is
not a root of unity.  We set $q_i=q^{d_i}$. We also set $h\in\CC$ such
that $q = e^h$, so that $q^r$ is well-defined for any $r\in\QQ$. We
will use the standard symbols for $q$-integers
\begin{align*}
[m]_z=\frac{z^m-z^{-m}}{z-z^{-1}}, \quad
[m]_z!=\prod_{j=1}^m[j]_z,
 \quad 
\qbin{s}{r}_z
=\frac{[s]_z!}{[r]_z![s-r]_z!}. 
\end{align*}

The quantum loop algebra $U_q(\wh{\Glie})$ is the $\C$-algebra defined
by generators
$e_i,\ f_i,\ k_i^{\pm1}$ ($0\le i\le n$) 
and the following relations for $0\le i,j\le n$.
\begin{align*}
&k_ik_j=k_jk_i,\quad k_0^{a_0}k_1^{a_1}\cdots k_n^{a_n}=1,\quad
&k_ie_jk_i^{-1}=q_i^{C_{i,j}}e_j,\quad k_if_jk_i^{-1}=q_i^{-C_{i,j}}f_j,
\\
&[e_i,f_j]
=\delta_{i,j}\frac{k_i-k_i^{-1}}{q_i-q_i^{-1}},
\\
&\sum_{r=0}^{1-C_{i.j}}(-1)^re_i^{(1-C_{i,j}-r)}e_j e_i^{(r)}=0\quad (i\neq j),
&\sum_{r=0}^{1-C_{i.j}}(-1)^rf_i^{(1-C_{i,j}-r)}f_j f_i^{(r)}=0\quad (i\neq j)\,.
\end{align*}
Here we have set $x_i^{(r)}=x_i^r/[r]_{q_i}!$ ($x_i=e_i,f_i$). 
The algebra $U_q(\wh{\Glie})$ has a Hopf algebra structure such that
\begin{align*}
&\Delta(e_i)=e_i\otimes 1+k_i\otimes e_i,\quad
\Delta(f_i)=f_i\otimes k_i^{-1}+1\otimes f_i,
\quad
\Delta(k_i)=k_i\otimes k_i\,,
\end{align*}
where $i=0,\cdots,n$. 

The algebra $U_q(\wh{\Glie})$ can also be presented in terms of the Drinfeld
generators \cite{Dri2,bec}
\begin{align*}
  x_{i,r}^{\pm}\ (i\in I, r\in\Z), \quad \phi_{i,\pm m}^\pm\ (i\in I,
  m\geq 0), \quad k_i^{\pm 1}\ (i\in I).
\end{align*}
We will use the generating series $(i\in I)$: 
$$\phi_i^\pm(z) = \sum_{m\geq 0}\phi_{i,\pm m}^\pm z^{\pm m} =
k_i^{\pm 1}\text{exp}\left(\pm (q_i - q_i^{-1})\sum_{m > 0} h_{i,\pm
    m} z^{\pm m} \right).$$
We also set $\phi_{i,\pm m}^\pm = 0$ for $m < 0$, $i\in I$.

The algebra $U_q(\wh{\Glie})$ has a $\ZZ$-grading defined by
$\on{deg}(e_i) = \on{deg}(f_i) = \on{deg}(k_i^{\pm 1}) = 0$ for $i\in
I$ and $\on{deg}(e_0) = - \on{deg}(f_0) = 1$.  It satisfies
$\on{deg}(x_{i,m}^\pm) = \on{deg}(\phi_{i,m}^\pm) = m$ for $i\in I$,
$m\in\ZZ$. For $a\in\CC^\times$, there is a corresponding automorphism
$$\tau_a : U_q(\wh{\Glie})\rightarrow U_q(\wh{\Glie})$$ 
such that an element $g$ of degree
$m\in\ZZ$ satisfies $\tau_a(g) = a^m g$.

\begin{defi} The Borel algebra $U_q(\bo)$ is 
the subalgebra of $U_q(\wh{\Glie})$ generated by $e_i$ 
and $k_i^{\pm1}$ with $0\le i\le n$. 
\end{defi}
This is a Hopf subalgebra of $U_q(\wh{\Glie})$. 
The algebra $U_q(\mathfrak{b})$ 
contains 
the Drinfeld generators
$x_{i,m}^+$, $x_{i,r}^-$, $k_i^{\pm 1}$, $\phi_{i,r}^+$ 
where $i\in I$, $m \geq 0$ and $r > 0$. When $\Glie = \sw_2$, these
elements generate $U_q(\mathfrak{b})$.

Denote $\tb \subset U_q(\bo)$ the subalgebra generated by $\{k_i^{\pm 1}\}_{i\in I}$. 
Set $\tb^\times=\bigl(\C^\times\bigr)^I$, and endow it with a group
structure by pointwise multiplication.  We define a group morphism
$\overline{\phantom{u}}:P_\Q \longrightarrow \tb^\times$ by setting
$\overline{\omega_i}(j)=q_i^{\delta_{i,j}}$. We shall use the standard
partial ordering on $\tb^\times$:
\begin{align}
\omega\leq \omega' \quad \text{if $\omega \omega'^{-1}$ 
is a product of $\{\ga_i^{-1}\}_{i\in I}$}.
\label{partial}
\end{align}

\subsection{Category $\mathcal{O}$ for representations of Borel
  algebras}    \label{catO}

For a $U_q(\mathfrak{b})$-module $V$ and $\omega\in \tb^\times$, we set
\begin{align}
V_{\omega}=\{v\in V \mid  k_i\, v = \omega(i) v\ (\forall i\in I)\}\,,
\label{wtsp}
\end{align}
and call it the weight space of weight $\omega$. 
For any $i\in I$, $r\in\ZZ$ we have $\phi_{i,r}^\pm (V_\omega)\subset V_\omega$
and $x_{i,r}^\pm (V_{\omega}) \subset V_{\omega \ga_i^{\pm 1}}$.
We say that $V$ is Cartan-diagonalizable 
if $V=\underset{\omega\in \tb^\times}{\bigoplus}V_{\omega}$.

\begin{defi} A series $\Psib=(\Psi_{i, m})_{i\in I, m\geq 0}$ 
of complex numbers such that 
$\Psi_{i,0}\neq 0$ for all $i\in I$ 
is called an $\ell$-weight. 
\end{defi}

We denote by $\tb^\times_\ell$ the set of $\ell$-weights. 
Identifying $(\Psi_{i, m})_{m\geq 0}$ with its generating series we shall
write
\begin{align*}
\Psib = (\Psi_i(z))_{i\in I},
\quad
\Psi_i(z) = \underset{m\geq 0}{\sum} \Psi_{i,m} z^m.
\end{align*}
Since each $\Psi_i(z)$ is an invertible formal power series,
$\tb^\times_\ell$ has a natural group structure.
We have a surjective morphism of groups
$\varpi : P_\ell\rightarrow P_\Q$ given by 
$\Psi_i(0) = q_i^{\varpi(\Psib)(\alpha_i^\vee)}$.
 
\begin{defi} A $U_q(\mathfrak{b})$-module $V$ is said to be 
of highest $\ell$-weight 
$\Psib\in \tb^\times_\ell$ if there is $v\in V$ such that 
$V =U_q(\mathfrak{b})v$ 
and the following hold:
\begin{align*}
e_i\, v=0\quad (i\in I)\,,
\qquad 
\phi_{i,m}^+v=\Psi_{i, m}v\quad (i\in I,\ m\ge 0)\,.
\end{align*}
\end{defi}

The $\ell$-weight $\Psib\in \tb^\times_\ell$ is uniquely determined by $V$. 
It is called the highest $\ell$-weight of $V$. 
The vector $v$ is said to be a highest $\ell$-weight vector of $V$.

\begin{prop}\label{simple} 
For any $\Psib\in \tb^\times_\ell$, there exists a simple 
highest $\ell$-weight module $L(\Psib)$ of highest $\ell$-weight
$\Psib$. This module is unique up to isomorphism.
\end{prop}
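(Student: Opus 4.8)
The plan is to realize $L(\Psib)$ as the unique irreducible quotient of a universal, Verma-type highest $\ell$-weight module, and then to obtain uniqueness from the standard observation that every simple module of highest $\ell$-weight $\Psib$ is such a quotient. Write $\mathcal{U}^{<}\subset U_q(\bo)$ for the subalgebra generated by the negative Drinfeld generators $x_{i,r}^-$ ($i\in I$, $r>0$), and $\mathcal{U}^{\geq 0}\subset U_q(\bo)$ for the one generated by $x_{i,m}^+$ ($i\in I$, $m\geq 0$), $\phi_{i,r}^+$ ($i\in I$, $r>0$) and $k_i^{\pm 1}$ ($i\in I$). The structural input I will need, which follows from Beck's PBW theorem \cite{bec} for $U_q(\wh{\Glie})$ and is the point I expect to require the most care, is the triangular decomposition: the multiplication map $\mathcal{U}^{<}\otimes\mathcal{U}^{\geq 0}\to U_q(\bo)$ is a linear isomorphism, where $\mathcal{U}^{<}$ is $(-Q^+)$-graded (in terms of the $\ga_i$) with finite-dimensional graded components and one-dimensional degree-zero component $\C\cdot 1$.

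First I would build the one-dimensional $\mathcal{U}^{\geq 0}$-module $\C_{\Psib}$ on which $x_{i,m}^+\mapsto 0$, $k_i\mapsto \Psi_{i,0}$ and $\phi_{i,r}^+\mapsto \Psi_{i,r}$; the hypothesis $\Psi_{i,0}\neq 0$ makes the $k_i$ act invertibly, and the module is well defined because each defining relation of $\mathcal{U}^{\geq 0}$ either involves only the mutually commuting $\phi$'s and $k$'s, which act by scalars, or contains a factor $x_{j,l}^+$ on each side and so specializes to $0=0$. Setting
$$M(\Psib)=U_q(\bo)\otimes_{\mathcal{U}^{\geq 0}}\C_{\Psib},$$
the triangular decomposition gives $M(\Psib)\cong\mathcal{U}^{<}\otimes\C_{\Psib}$ as vector spaces. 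Hence $M(\Psib)\neq 0$; it is Cartan-diagonalizable with finite-dimensional weight spaces and all weights $\leq\overline{\varpi(\Psib)}$; its top weight space is the line spanned by the image $v$ of $1\otimes 1$; and by construction $e_iv=0$ and $\phi_{i,m}^+v=\Psi_{i,m}v$, so $M(\Psib)$ is of highest $\ell$-weight $\Psib$.

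Next I would note that any proper submodule $N\subsetneq M(\Psib)$, being Cartan-diagonalizable, must avoid the line $M(\Psib)_{\overline{\varpi(\Psib)}}=\C v$ (otherwise $v\in N$ and $N=U_q(\bo)v=M(\Psib)$); therefore the sum $M'(\Psib)$ of all proper submodules is again proper, so it is the unique maximal proper submodule, and $L(\Psib):=M(\Psib)/M'(\Psib)$ is simple of highest $\ell$-weight $\Psib$. For uniqueness, let $L$ be simple of highest $\ell$-weight $\Psib$ with highest $\ell$-weight vector $w$. A short induction on $m$ shows $x_{i,m}^+w=0$ for all $i\in I$ and $m\geq 0$: indeed $x_{i,m+1}^+$ is, up to a nonzero scalar (as $q$ is not a root of unity), the commutator $[h_{i,1},x_{i,m}^+]$, while $h_{i,1}\in\mathcal{U}^{\geq 0}$ acts on $w$ by a scalar determined by $\Psi_{i,0},\Psi_{i,1}$, so $x_{i,m+1}^+w=h_{i,1}(x_{i,m}^+w)-x_{i,m}^+(h_{i,1}w)=0$ by the inductive hypothesis together with $x_{i,0}^+w=e_iw=0$. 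Thus $\C w\subset L$ is a $\mathcal{U}^{\geq 0}$-submodule isomorphic to $\C_{\Psib}$, so by the universal property $u\mapsto uw$ factors through a surjection $M(\Psib)\twoheadrightarrow L$ whose kernel is proper (as $w\neq 0$), hence contained in $M'(\Psib)$, hence equal to it by simplicity of $L$; therefore $L\cong L(\Psib)$.
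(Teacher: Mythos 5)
The paper offers no proof of this proposition at all: it is background recalled from \cite{HJ}, and your strategy (induce a Verma-type module $M(\Psib)$ from a one-dimensional module over a ``non-negative'' subalgebra, quotient by the unique maximal proper submodule, deduce uniqueness from the universal property) is exactly the standard one used there. Several of your steps are fine as written: the separation of weight spaces showing that every proper submodule misses $\C v$, the induction $x_{i,m+1}^+w=[2]_{q_i}^{-1}\bigl(h_{i,1}x_{i,m}^+w-x_{i,m}^+h_{i,1}w\bigr)=0$ (using that $[2]_{q_i}\neq 0$ since $q$ is not a root of unity and that $h_{i,1}$ lies in $U_q(\bo)$ and acts on $w$ by a scalar), and the final identification $L\cong L(\Psib)$.

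However, the structural input you single out as ``the point requiring the most care'' is false as stated as soon as $n\geq 2$. By Beck's PBW theorem \cite{bec}, $U_q(\bo)$ contains the root vectors $E_{\delta-\alpha}$ for \emph{every} positive root $\alpha$ of $\Glie$, not only the simple ones. Take $i\sim j$, so that $\alpha_i+\alpha_j$ is a root. The vector $E_{\delta-\alpha_i-\alpha_j}\in U_q(\bo)$ has finite weight $-\alpha_i-\alpha_j$ and degree $1$ for the $\Z$-grading of Section 2.1 (the coefficient of $\alpha_0$). On the other hand, any element of $\mathcal{U}^{<}\,\mathcal{U}^{\geq 0}$ of finite weight $-\alpha_i-\alpha_j$ has degree $\geq 2$: the $\mathcal{U}^{\geq 0}$ factor has weight in $Q^+$ and degree $\geq 0$, so the $\mathcal{U}^{<}$ factor must have weight $-\alpha_i-\alpha_j-\beta$ with $\beta\in Q^+$ and is therefore a sum of monomials in at least two generators $x_{k,r}^-$, each of degree $r\geq 1$. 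Hence the multiplication map $\mathcal{U}^{<}\otimes\mathcal{U}^{\geq 0}\to U_q(\bo)$ is not surjective, and the identification $M(\Psib)\cong\mathcal{U}^{<}\otimes\C_{\Psib}$, on which the nonvanishing of $M(\Psib)$ and the one-dimensionality of its top weight space rest, collapses. (The paper's remark that the listed Drinfeld generators generate $U_q(\bo)$ ``when $\Glie=\sw_2$'' is precisely a warning that this fails in general.) The repair is to take for $\mathcal{U}^{<}$ the intersection $U_q(\bo)\cap U_q(\wh{\Glie})^-$, spanned by PBW monomials in the $E_{-\alpha+m\delta}$ with $\alpha>0$ and $m\geq 1$; this subspace is still $(-Q^+)$-graded with degree-zero component $\C\cdot 1$, which is all your argument actually uses. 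Note that its graded components are infinite-dimensional (all $x_{i,m}^-$, $m\geq 1$, have weight $-\alpha_i$), so your parenthetical claim that $M(\Psib)$ has finite-dimensional weight spaces is also wrong --- harmlessly, since only the one-dimensionality of the top weight space is needed.
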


The submodule of $L(\Psib)\otimes L(\Psib')$ generated by the tensor
product of the highest $\ell$-weight vectors is of highest
$\ell$-weight $\Psib\Psib'$. In particular, $L(\Psib\Psib')$ is a
subquotient of $L(\Psib)\otimes L(\Psib')$.

\begin{defi}\cite{HJ}
For $i\in I$ and $a\in\CC^\times$, let 
\begin{align}
L_{i,a}^\pm = L(\Psib_{i,a})
\quad \text{where}\quad 
(\Psib_{i,a})_j(z) = \begin{cases}
(1 - za)^{\pm 1} & (j=i)\,,\\
1 & (j\neq i)\,.\\
\end{cases} 
\label{fund-rep}
\end{align}
\end{defi}
We call $L_{i,a}^+$ (resp. $L_{i,a}^-$) a positive (resp. negative)
prefundamental representation in the category $\mathcal{O}$.

\begin{defi}\label{oned}\cite{HJ}
For $\omega\in \tb^\times$, let 
$$[\omega] = L(\Psib_\omega)
\quad \text{where}\quad 
(\Psib_\omega)_i(z) = \omega(i) \quad (i\in I).$$
\end{defi}
Note that the representation $[\omega]$ is $1$-dimensional with a
trivial action of $e_0,\cdots, e_n$. For $\lambda\in P$, we will simply use
the notation $[\lambda]$ for the representation
$[\overline{\lambda}]$.

For $a\in\CC^\times$, the subalgebra $U_q(\mathfrak{b})$ is stable
under $\tau_a$.  Denote its restriction to $U_q(\mathfrak{b})$ by the
same letter.  Then the pullbacks of the $U_q(\mathfrak{b})$-modules
$L_{i,b}^\pm$ by $\tau_a$ is $L_{i,ab}^\pm$.

For $\lambda\in \tb^\times$, we set $D(\lambda )=
\{\omega\in \tb^\times \mid \omega\leq\lambda\}$. The following category $\mathcal{O}$ is introduced in \cite{HJ}.

\begin{defi} A $U_q(\mathfrak{b})$-module $V$ 
is said to be in category $\mathcal{O}$ if:

i) $V$ is Cartan-diagonalizable,

ii) for all $\omega\in \tb^\times$ we have 
$\dim (V_{\omega})<\infty$,

iii) there exist a finite number of elements 
$\lambda_1,\cdots,\lambda_s\in \tb^\times$ 
such that the weights of $V$ are in 
$\underset{j=1,\cdots, s}{\bigcup}D(\lambda_j)$.
\end{defi}

The category $\mathcal{O}$ is a monoidal category. 

\newcommand{\mfr}{\mathfrak{r}}
Let 
$\mfr$
be the subgroup of $\tb^\times_\ell$
consisting of $\Psib$ such that 
$\Psi_i(z)$ is rational for any $i\in I$.

\begin{thm}\label{class}\cite{HJ} Let $\Psib\in\tb^\times_\ell$. 
The simple module $L(\Psib)$ is in category 
$\mathcal{O}$ if and only if $\Psib\in \mfr$. 
\end{thm}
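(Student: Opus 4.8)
\medskip
\noindent\emph{Strategy.} The plan is to prove the two implications separately. For the ``only if'' direction I would perform an $\sw_2$-reduction at each node $i\in I$; for the ``if'' direction I would factor a rational $\ell$-weight into elementary pieces and invoke monoidality of $\mathcal O$.

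\smallskip
\noindent\emph{Necessity.} Suppose $L(\Psib)\in\mathcal O$, with highest $\ell$-weight vector $v$ of weight $\omega\in\tb^\times$, and fix $i\in I$. First one checks that $x^+_{i,m}v=0$ for all $m\ge 0$: for $m=0$ this is the defining relation $e_iv=0$, and the general case follows by induction, using the Drinfeld relation that expresses $[h_{i,1},x^+_{i,r}]$ as a nonzero multiple of $x^+_{i,r+1}$ together with the fact that $h_{i,1}$ acts on $v$ by a scalar. (Equivalently, the triangular decomposition of $U_q(\bo)$ shows that every weight of $L(\Psib)=U_q(\bo)v$ is $\le\omega$, so $L(\Psib)_{\omega\ga_i}=0$.) Now condition (ii) in the definition of $\mathcal O$ forces $\dim L(\Psib)_{\omega\ga_i^{-1}}<\infty$, and since $x^-_{i,s}v\in L(\Psib)_{\omega\ga_i^{-1}}$ for every $s\ge 1$, there is a finite set $S\subset\Z_{\ge1}$ and scalars $(c_s)_{s\in S}$, not all zero, with $\sum_{s\in S}c_s\,x^-_{i,s}v=0$. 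Applying $x^+_{i,r}$ with $r\ge 0$, using $x^+_{i,r}v=0$, the Drinfeld relation $[x^+_{i,r},x^-_{i,s}]=(\phi^+_{i,r+s}-\phi^-_{i,r+s})/(q_i-q_i^{-1})$, and $\phi^-_{i,r+s}=0$ since $r+s>0$, one obtains
$$\sum_{s\in S}c_s\,\Psi_{i,r+s}=0\qquad(r\ge 0).$$
Hence the sequence $(\Psi_{i,m})_{m\ge1}$ obeys a constant-coefficient linear recurrence, so its generating series $\Psi_i(z)$ is the expansion of a rational function. As $i\in I$ was arbitrary, $\Psib\in\mfr$.

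\smallskip
\noindent\emph{Sufficiency.} Conversely, let $\Psib\in\mfr$. Each $\Psi_i(z)$ is an invertible power series that equals a rational function, hence is regular and nonzero at $z=0$; therefore
$$\Psi_i(z)=\Psi_{i,0}\,\frac{\prod_{k}(1-u_{i,k}z)}{\prod_{l}(1-v_{i,l}z)}$$
for finitely many $u_{i,k},v_{i,l}\in\CC^\times$. Choosing $\omega\in\tb^\times$ with $\omega(i)=\Psi_{i,0}$, this identity exhibits $\Psib$ as a finite product, in the group $\tb^\times_\ell$, of the highest $\ell$-weights of the one-dimensional module $[\omega]$ and of the prefundamental modules $L^+_{i,u_{i,k}}$ and $L^-_{i,v_{i,l}}$. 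Each of these modules lies in $\mathcal O$: $[\omega]$ is one-dimensional, and the $L^\pm_{i,a}$ lie in $\mathcal O$ as recalled above (following \cite{HJ}). Since $\mathcal O$ is monoidal, $L(\Psib'\Psib'')$ is a subquotient of $L(\Psib')\otimes L(\Psib'')$, and $\mathcal O$ is closed under subquotients (a subquotient of a module in $\mathcal O$ inherits conditions (i)--(iii) from the ambient module), an induction on the number of factors gives $L(\Psib)\in\mathcal O$.

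\smallskip
\noindent\emph{Main obstacle.} The only genuinely non-formal ingredient is the one invoked in the sufficiency argument: that the infinite-dimensional prefundamental modules $L^\pm_{i,a}$ really do lie in $\mathcal O$, i.e.\ have weights bounded above and finite-dimensional weight spaces. Proving this requires an explicit construction of these modules --- for $L^+_{i,a}$, as a suitably twisted and normalized limit of Kirillov--Reshetikhin modules of $U_q(\wh{\Glie})$, with a verification that the matrix coefficients stabilize on each weight space, and an analogous construction for $L^-_{i,a}$ --- and this is the technical heart of \cite{HJ}. Everything else above is elementary; the one subtlety in the necessity direction is the vanishing $x^+_{i,m}v=0$, which rests on the Drinfeld presentation (or the triangular decomposition) of $U_q(\bo)$.
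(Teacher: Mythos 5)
This theorem is quoted from \cite{HJ} and the paper gives no proof of its own, so there is nothing internal to compare against; your argument is, in substance, the proof given in \cite{HJ} itself. Both directions are correct: the necessity argument (finite-dimensionality of $L(\Psib)_{\omega\ga_i^{-1}}$ forcing a linear recurrence on $(\Psi_{i,m})$ via $[x^+_{i,r},x^-_{i,s}]$ and the vanishing of $\phi^-_{i,r+s}$) and the sufficiency argument (factoring $\Psib$ into $\Psib_\omega$ and the $\ell$-weights of $L^{\pm}_{i,a}$, then using that $L(\Psib'\Psib'')$ is a subquotient of $L(\Psib')\otimes L(\Psib'')$ and that $\mathcal O$ is monoidal and closed under subquotients) are exactly the right steps, and you correctly identify the one non-formal input --- that the prefundamental modules $L^{\pm}_{i,a}$ themselves lie in $\mathcal O$, established in \cite{HJ} via the asymptotic limit of Kirillov--Reshetikhin modules --- as the point that cannot be obtained by soft arguments.
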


Let $\mathcal{E}$ 
be the additive group of maps $c : P_\Q \rightarrow \ZZ$ 
whose support 
$$\text{supp}(c) = \{\omega\in P_\Q,c(\omega) \neq 0\}$$ 
is contained in 
a finite union of sets of the form $D(\mu)$. 
For $\omega\in P_\Q$, we define $[\omega] = \delta_{\omega,.}\in\mathcal{E}$.
For $V$ in the category $\mathcal{O}$ we define the character of $V$ to be
an element of $\mathcal{E}$
\begin{align}
\chi(V) = \sum_{\omega\in\tb^\times} 
\text{dim}(V_\omega) [\omega]\,.
\label{ch}
\end{align}

As for the category $\mathcal{O}$ of a classical Kac--Moody algebra,
the multiplicity of a simple module in a module of our category
$\mathcal{O}$ is well-defined (see \cite[Section 9.6]{ka}) and we have
the corresponding Grothendieck ring $K_0(\mathcal{O})$ (see also
\cite[Section 3.2]{HL}). Its elements are the formal
sums
$$\chi = \sum_{\Psib\in \mfr} \lambda_{\Psib} [L(\Psib)]$$ 
where the $\lambda_{\Psib}\in\ZZ$ are set so that $\sum_{\Psib\in
  \mfr, \omega\in P_\Q} |\lambda_{\Psib}|
\text{dim}((L(\Psib))_\omega) [\omega]$
is in $\mathcal{E}$.

 We naturally identify $\mathcal{E}$ with 
the Grothendieck ring of the category of representations of
$\mathcal{O}$ with constant $\ell$-weights,
the simple objects of which are the $[\omega]$, $\omega\in P_\Q$. Thus
as in \cite[Section 9.7]{ka} we will regard elements of $\mathcal{E}$
as formal sums
\[
 c = \sum_{\omega\in \text{Supp}(c)} c(\omega)[\omega].
\]
The multiplication is given by $[\omega][\omega'] = [\omega+\omega']$ and $\mathcal{E}$
is regarded as a subring of $K_0(\mathcal{O})$. 

The character defines a ring morphism $\chi : K_0(\mathcal{O})\rightarrow \mathcal{E}$ which is not injective.

\subsection{Monomials and finite-dimensional representations}\label{fdrep}

Following \cite{Fre}, consider the ring of Laurent polynomials $\Yim =
\ZZ[Y_{i,a}^{\pm 1}]_{i\in I,a\in\CC^\times}$ in the indeterminates
$\{Y_{i,a}\}_{i\in I, a\in \C^\times}$.  Let $\mathcal{M}$ be the group of
monomials of $\Yim$. For example, for $i\in I, a\in\CC^\times$, define
$A_{i,a}\in\mathcal{M}$ to be
$$
Y_{i,aq_i^{-1}}Y_{i,aq_i}
\Bigl(\prod_{\{j\in I|C_{j,i} = -1\}}Y_{j,a}
\prod_{\{j\in I|C_{j,i} = -2\}}Y_{j,aq^{-1}}Y_{j,aq}
\prod_{\{j\in I|C_{j,i} =
-3\}}Y_{j,aq^{-2}}Y_{j,a}Y_{j,aq^2}\Bigr)^{-1}\,.
$$
For a monomial 
$m = \prod_{i\in I, a\in\CC^\times}Y_{i,a}^{u_{i,a}}$, 
we consider its `evaluation on $\phi^+(z)$'. 
By definition it is an element 
$m(\phi(z))\in\mfr$  
given by
$$m\bigl(\phi(z))=
\prod_{i\in I, a\in\CC^\times}
\left(Y_{i,a}(\phi(z))\right)^{u_{i,a}}\text{ where }
\Bigl(Y_{i,a}\bigl(\phi(z)\bigr)\Bigr)_j
=\begin{cases}
\displaystyle{q_i\frac{1-a q_i^{-1}z}{1-aq_iz}}& (j=i),\\
1 & (j\neq i).\\
\end{cases}$$
This defines an injective group morphism 
$\mathcal{M}\rightarrow \mfr$. 
We identify a monomial $m\in\mathcal{M}$ with its image in 
$\mfr$. 
Note that $\varpi(Y_{i,a}) = \omega_i$.

Let $\mathcal{C}$ be the category of (type $1$) finite-dimen\-sional
representations of $U_q(\wh{\Glie})$.

A monomial $M\in\mathcal{M}$ is said to be dominant if
$M\in\ZZ[Y_{i,a}]_{i\in I, a\in\CC^\times}$. 
Then $L(M)$ is finite-dimensional.  Moreover, the action of
$U_q(\mathfrak{b})$ can be uniquely extended to an action of the full
quantum affine algebra $U_q(\wh{\Glie})$, and any simple object in the
category $\mathcal{C}$ is of this form. By \cite{CP} and \cite[Remark
3.11]{FH}, for $L(\Psib)$ a finite-dimensional module in the
  category $\mathcal{O}$, there is $M$ as above and
  $\omega\in\tb^\times$ such that $$L(\Psib) \simeq L(M)\otimes [\omega].$$ 
Note that if $\Psib$ is a monomial in the variables
$$\wt{Y}_{i,a} = [-\omega_i]Y_{i,a},$$ 
then $L(\Psib)$ is finite-dimensional. We will also use in the
following notation:
$$\wt{A}_{i,a} =
\Psib_{i,aq_i^{-2}}\Psib_{i,aq_i^2}^{-1}\left(\prod_{j\sim i, r_j >
    1}\Psib_{j,aq_j^{-1}}^{-1}\Psib_{j,aq_j}\right)
\left(\prod_{j\sim i, r_j =
    1}\Psib_{j,aq_i^{-1}}^{-1}\Psib_{j,aq_i}\right) =
[-\alpha_i]A_{i,a}.$$

For $i\in I$, $a\in\CC^\times$ and $k\geq 0$, we have the
Kirillov--Reshetikhin (KR) module
\begin{align}
W_{k,a}^{(i)} = L(Y_{i,a}Y_{i,aq_i^2}\cdots Y_{i,aq_i^{2(k-1)}})\,.
\label{KRmod}
\end{align}
The representations $W_{1,a}^{(i)} = L(Y_{i,a})$ are called
fundamental representations. The simple tensor product of a KR-module
by a one-dimensional representation $[\omega]$, $\omega\in P$, will
also be called a KR-module.

\subsection{Example}\label{dualex}

%The dual category $\mathcal{O}^*$ is considered in \cite[Section 3.6]{HJ} as
%the category of $U_q(\bo)$-modules whose duals are in the category $\mathcal{O}$.
%In particular it contains the prefundamental representations $R_{i,a}^+$ defined by 
%$(R_{i,a}^+)^*\simeq L_{i,a}^+$ whose structure has been studied in \cite{HJ, FH}.

The prefundamental representations have a relatively simple structure
in comparison to the general simple representations in the category
$\mathcal{O}$.

As an example, let us consider the case $\Glie=B_2$ and the
representation $L_{2,1}^+$. From \cite{HJ} we know the action of a
large number of generators of the Borel algebra on this
representation. For $r > 0$, $\phi_{1,r}^+$, $\phi_{2,r + 1}^+$,
$x_{1,r}^+, x_{2,r}^+$, $x_{1,r}^-$, $x_{2,r+1}^-$ act by $0$ on this
representation; $k_2^{-1}\phi_{2,1}^+$ is the operator
$-\text{Id}$; the root operators $E_{-\alpha_1 -\alpha_2 +
  (r+1)\delta}$, $E_{-2\alpha_1-\alpha_2 + (r+1)\delta}$ act by $0$.

As this representation $L_{2,1}^+$ is constructed in \cite{HJ} as a
limit of finite-dimensional Kirillov-Reshetikhin modules whose
structure is well-known (see \cite{HCrelle, h3} and references
therein), it has a basis
$$L_{2,1}^+ = \bigoplus_{T\in\mathcal{T}} \CC v_T$$
of weight vectors parametrized by the semi-infinite tableaux $T =
(T_{i,j})_{i = 1, 2,j\geq 0}$ with coefficients in the ordered set
$$1\prec 2\prec 0 \prec \overline{1}\prec \overline{2}$$ 
satisfying $T_{i,j}\preceq T_{i,j+1}$, $(T_{i,j},T_{i,j+1})\neq (0,0)$
and ($T_{1,j}\prec T_{2,j}$ or $(T_{1,j},T_{2,j}) = (0,0)$). It is
required in addition that $T_{i,j} = i$ for $j$ is large enough.

The weight of $v_T$ is
$$\omega_T = -\sum_{j\geq 0} \alpha(T_{1,j}) + \beta(T_{2,j})$$
where $\alpha(1,2,0,\overline{1}) = (0,\alpha_1,\alpha_1 +
\alpha_2,\alpha_1 + 2\alpha_2)$ and
$\beta(2,0,\overline{1},\overline{2}) =
(0,\alpha_2,2\alpha_2,2\alpha_2+\alpha_1)$.
In particular, we have the explicit character formula 
$$\chi_2 = \chi(L_{2,1}^+) = \sum_{T\in\mathcal{T}} [\omega_T].$$

\section{The $Q\wt{Q}$-system}    \label{QwtQ}

In this section we prove the $Q\wt{Q}$-system of relations in
$K_0({\mc O})$ (Theorem \ref{relation}). This is one of the main
results of this paper.

\subsection{Statement}

Let $\Glie$ be an arbitrary simple Lie algebra. We start by
introducing the following representations.

\begin{defi} For $i\in I$ and $a\in\CC^\times$, we define the
  representation
$$X_{i,a} = L(\wt{\Psib}_{i,a})$$
where
$$\wt{\Psib}_{i,a} = \Psib_{i,a}^{-1}
 \left(\prod_{j|C_{i,j} = -
     1}\Psib_{j,aq_i}\right)\left(\prod_{j|C_{i,j} =
     -2}\Psib_{j,a}\Psib_{j,aq^2}\right)\left(\prod_{j|C_{i,j} =
     -3}\Psib_{j,aq^{-1}}\Psib_{j,aq}\Psib_{j,aq^3}\right).$$
\end{defi}

Note that if we think of $\Psib_{i,a}$ as an analogue of the
fundamental weight $\omega_i$, then $\wt{\Psib}_{i,a}$ is an analogue
of the weight $\omega_i-\al_i$.

\begin{rem} {\rm (i) If $\g$ is simply-laced, then
$$
X_{i,a} = L(\Psib_{i,a}^{-1}\prod_{j\sim i}\Psib_{j,aq}).
$$
 
(ii) The $\ell$-weight $\Psib_{i,a}$ may be written as an infinite product 
$$\Psib_{i,a} = \wt{Y}_{i,aq_i}\wt{Y}_{i,aq_i^3}\wt{Y}_{i,aq_i^5}\cdots.$$
where $\wt{Y}_{i,a}$ as in Section \ref{fdrep} is the analogue of a
fundamental weight.
 Similarly, one may write $\wt{\Psib}_{i,a}$ as an infinite product
 involving the $\wt{A}_{i,a}$ as in Section \ref{fdrep} which is the
 analogue of a simple root. Indeed,
$$\wt{A}_{i,aq_i^2}^{-1}\wt{Y}_{i,aq_i^3}\wt{A}_{i,aq_i^4}^{-1}
\wt{Y}_{i,aq_i^5}\wt{A}_{i,aq_i^6}^{-1}\wt{Y}_{i,aq_i^7}\cdots$$
$$ = \Psib_{i,a}^{-1} \prod_{K\geq 1}\left(\prod_{j\sim i, r_j > 1}\Psib_{j,aq_i^{2K}q_j^{-1}}\Psib_{j,aq_i^{2K}q_j}^{-1}\right)
\left(\prod_{j\sim i, r_j = 1}\Psib_{j,aq_i^{2K}q_i^{-1}}\Psib_{j,aq_i^{2K}q_i}^{-1}\right)$$
$$= \Psib_{i,a}^{-1}\left(\prod_{j\sim i, r_j > 1}\Psib_{j,aq_i^2q^{-r_j}}\Psib_{j,aq_i^2q_j^{-r_j+2r_i}}\cdots \Psib_{j,aq_i^2q^{r_j-2r_i}}\right)\left(\prod_{j\sim i, r_j = 1}\Psib_{j,aq_i}\right) =\wt{\Psib}_{i,a} .$$
So $\wt{\Psib}_{i,a}$ may be indeed be viewed as an analogue of the
difference $\omega_i - \alpha_i$.
}\qed
\end{rem}

We define the $Q$ and $\wt{Q}$ variables as follows:
\begin{equation}    \label{Q and wtQ}
Q_{i,a} = [L_{i,a}^+] \qquad \text{ and } \qquad \wt{Q}_{i,a} =
[X_{i,aq_i^{-2}}]\chi_i^{-1}\left[-\frac{\alpha_i}{2}\right],
\end{equation}
where 
\begin{equation}\label{chii}\chi_i  = \chi(L_{i,a}^+)\in
  \mathcal{E}\end{equation} 
does not depend on $a$ and is seen as an element of $K_0(\mathcal{O})$. 

Now we state the $Q\wt{Q}$-system which is one of the main results of
this paper.

\begin{thm}\label{relation}
For any $i\in I$, $a\in\CC^\times$ we have the following
$Q\wt{Q}$-system:
\begin{multline}    \label{relation1}
\left[\frac{\alpha_i}{2}\right]Q_{i,aq_i^{-1}}\wt{Q}_{i,aq_i} -
\left[-\frac{\alpha_i}{2}\right] Q_{i,aq_i}\wt{Q}_{i,aq_i^{-1}}
= \\
\left(\prod_{j|C_{i,j} = - 1}Q_{j,a}\right)\left(\prod_{j|C_{i,j} =
    -2}Q_{j,aq^{-1}}Q_{j,aq}\right)\left(\prod_{j|C_{i,j} =
    -3}Q_{j,aq^{-2}}Q_{j,a}Q_{j,aq^2}\right)\ .
\end{multline}
\end{thm}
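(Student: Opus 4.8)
The plan is to prove the $Q\wt{Q}$-system \eqref{relation1} by translating it into an identity of $q$-characters and then verifying that identity. The key observation is that all the modules involved --- $L^+_{i,a}$, $X_{i,a}$, the one-dimensional modules $[\omega]$, and the normalization factor $\chi_i$ --- have $q$-characters that are, up to the prefactors $[\pm\alpha_i/2]$, products of the basic generating series $\Psib_{i,a}$ evaluated as rational functions. First I would compute $\chi_q(L^+_{i,a})$: by the results of \cite{HJ,FH} the positive prefundamental module $L^+_{i,a}$ has a $q$-character whose highest $\ell$-weight is $\Psib_{i,a}$ and all of whose $\ell$-weights are obtained by multiplying by (inverse) simple-root-like factors $A^{-1}_{j,b}$; the crucial structural fact is that $\chi_q(L^+_{i,a})$ factors as $\Psib_{i,a}$ times a power series $\chi_i$ in the $A^{-1}_{j,b}$ that is \emph{independent of $a$} (this is exactly the $\chi_i$ of \eqref{chii}). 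Similarly I would compute $\chi_q(X_{i,a}) = \chi_q(L(\wt\Psib_{i,a}))$, whose highest $\ell$-weight is $\wt\Psib_{i,a} = \wt{A}_{i,a}^{-1}\Psib_{i,a}\cdot(\text{stuff})$ --- by the Remark, $\wt\Psib_{i,a}$ is the ``$\omega_i-\alpha_i$'' analogue.

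The heart of the argument is then the following: I expect that $X_{i,a}$ is, as a $U_q(\bo)$-module, closely related to a subquotient of $L^+_{i,aq_i^{-1}}\otimes L^+_{i,aq_i}$ (or of a twist thereof), so that one has a short exact sequence, or at least an identity in $K_0(\mc O)$, of the form
\begin{equation*}
[L^+_{i,aq_i^{-1}}]\,[L^+_{i,aq_i}] \;=\; (\text{prefactor})\,[X_{i,a'}](\text{prefactor}) \;+\; \Big(\prod_j [\cdots]\Big)\prod_{j}[L^+_{j,\cdot}],
\end{equation*}
where the last term accounts for the right-hand side of \eqref{relation1}. The strategy to establish such an identity is to work with $q$-characters: the product $\chi_q(L^+_{i,aq_i^{-1}})\chi_q(L^+_{i,aq_i})$ equals $\Psib_{i,aq_i^{-1}}\Psib_{i,aq_i}\cdot\chi_i^2$, and one must show that this, after multiplying by the appropriate monomials $[\pm\alpha_i/2]$, decomposes as a sum of $\chi_q$ of the modules on both sides of \eqref{relation1}. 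Since $\chi_q$ is injective on $K_0(\mc O)$ when restricted to appropriate subrings --- or more precisely, since an identity of $q$-characters that respects the highest-$\ell$-weight filtration lifts to an identity in $K_0(\mc O)$ --- it suffices to check the $q$-character identity. The prefactors $[\alpha_i/2]$ and $[-\alpha_i/2]$ are the ``$\sw_2$-Wronskian'' shift factors; their role is to make the two terms on the left-hand side have the \emph{same} weight (namely $2\omega_i - \alpha_i$ up to normalization), and their difference produces a telescoping/cancellation leaving only the ``lower'' terms indexed by the neighbors $j\sim i$.

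More concretely, I would reduce to the case of $\sw_2$ attached to the node $i$: restrict attention to the $i$-th Drinfeld subalgebra $U_{q_i}(\wh\sw_2)\subset U_q(\ghat)$ and use the known $\wh\sw_2$ quantum Wronskian relation of \cite{BLZ3} (recalled in the paper as the $\sw_2$ example), which states precisely that for $\sw_2$ one has $[\alpha/2]Q_{aq^{-1}}\wt Q_{aq} - [-\alpha/2]Q_{aq}\wt Q_{aq^{-1}} = 1$ (the right-hand side being empty when there are no neighbors). The passage from $\sw_2$ to general $\g$ then amounts to showing that restriction of $\chi_q$ to the $i$-th node converts the general statement into the $\sw_2$ statement with a twist by the contribution of the neighbors $\prod_{j\sim i}(\cdots)$, which is exactly the right-hand side of \eqref{relation1}; the factors $Q_{j,aq^{-1}}Q_{j,aq}$ (resp.\ the triple product) for $C_{i,j}=-2$ (resp.\ $-3$) come from the defining formula for $\wt\Psib_{i,a}$ and match the way neighbors enter the monomial $A_{i,a}$.

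The main obstacle, I expect, is proving the decomposition of $L^+_{i,aq_i^{-1}}\otimes L^+_{i,aq_i}$ (equivalently, controlling $\chi_q(X_{i,a})$ precisely enough): one needs to know not just the highest $\ell$-weight of $X_{i,a}$ but enough of its $q$-character --- or enough of the structure of the tensor product $L^+\otimes L^+$ and its Jordan--H\"older constituents in $K_0(\mc O)$ --- to pin down all terms. The prefundamental modules are infinite-dimensional, so standard finite-dimensional $q$-character combinatorics do not directly apply; one must use the truncation/limit arguments of \cite{HJ,FH} (realizing $L^+_{i,a}$ as a limit of KR-modules $W^{(i)}_{k,\cdot}$, for which $q$-characters are understood via the Frenkel--Mukhin algorithm) and carefully pass to the limit in $K_0(\mc O)$. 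Once the $q$-character identity is established and shown to lift to $K_0(\mc O)$ (using that $K_0(\mc O)$ is, in the relevant completion, controlled by $q$-characters together with the filtration by highest $\ell$-weight), dividing through by $\chi_i^2$ and absorbing the scalar/one-dimensional factors into the $[\pm\alpha_i/2]$ and $\wt Q$ normalizations of \eqref{Q and wtQ} yields \eqref{relation1}.
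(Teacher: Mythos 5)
Your overall framework---translate \eqref{relation1} into an identity of $q$-characters, use injectivity of $\chi_q$ on $K_0(\mathcal{O})$, and control the infinite-dimensional modules by truncation/limit arguments through KR modules and the Frenkel--Mukhin algorithm---is indeed the paper's strategy, and you correctly locate the main difficulty in pinning down $\chi_q(X_{i,a})$. However, the specific mechanism you propose at the ``heart of the argument'' fails. You posit that $X_{i,a}$ is (close to) a subquotient of $L^+_{i,aq_i^{-1}}\otimes L^+_{i,aq_i}$ and that this tensor product decomposes in $K_0(\mathcal{O})$ as an $X$-term plus a product of neighboring $Q$'s. By Theorem \ref{stensor} any tensor product of positive prefundamental representations is \emph{simple}, so $[L^+_{i,aq_i^{-1}}][L^+_{i,aq_i}]=[L(\Psib_{i,aq_i^{-1}}\Psib_{i,aq_i})]$ is a single simple class and admits no such decomposition; and that simple module is not any $X_{i,a'}$, since its highest $\ell$-weight is a polynomial in the $\Psib_{j,b}$ whereas $\wt{\Psib}_{i,a'}$ contains the factor $\Psib_{i,a'}^{-1}$. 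Note also that the left-hand side of \eqref{relation1} is a difference of two products of the form $Q\wt{Q}$, not a product $QQ$; the shape of the identity you wrote is closer to the $QQ^*$-system of Section \ref{QQstar}, which is a different relation involving a different module. Likewise, your proposed ``reduction to the $i$-th Drinfeld $\wh{\sw}_2$'' is not a step the paper takes and would itself need justification: restriction to $U_{q_i}(\wh{\sw}_2)$ does not interact in any simple way with $q$-characters of infinite-dimensional $U_q(\bo)$-modules.

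What actually carries the proof is the closed formula of Proposition \ref{genfor},
$\chi_q(X_{i,a})=[\wt{\Psib}_{i,a}]\,\chi_{i,a}\prod_{j\neq i}\chi_j^{-C_{i,j}}$ with
$\chi_{i,a}=\sum_{r\ge 0}(A_{i,a}A_{i,aq_i^{-2}}\cdots A_{i,aq_i^{-2(r-1)}})^{-1}$.
This is established by sandwiching: an upper bound comes from realizing $X_{i,a}$ as a subquotient of $L(\wt{\Psib}_i^{(M)})$ tensored with a simple tensor product of \emph{positive} prefundamentals, where the truncated $q$-characters of $L(\wt{\Psib}_i^{(N,M)})$ (tensor products of KR modules) are computed via \cite{HCrelle,H} and the limit $N\to-\infty$ is taken; a matching lower bound comes from tensoring $X_{i,a}$ with \emph{negative} prefundamentals and using Theorem \ref{formuachar}(ii). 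Once this formula is in hand, \eqref{relation1} reduces to the elementary recursion $\chi_{i,aq_i^{-1}}=1+A_{i,aq_i^{-1}}^{-1}\chi_{i,aq_i^{-3}}$ together with a bookkeeping identity matching the factors of $A_{i,aq_i^{-1}}$ against the neighbor contributions on the right-hand side. The $\sw_2$-like structure you hoped to exploit enters only through the factor $\chi_{i,a}$ in this proven formula, not through any restriction functor; so your proposal, as written, is missing the actual content of the proof.
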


\begin{rem}    \label{rem 3 parts}

{\rm (i) This $Q\wt{Q}$-system matches \cite[Formula (5.4)]{MRV2},
    with $E$ replaced by $a$ and $\Omega$ replaced by $q^{-2}$.

(ii) In this simply-laced case, the $Q\wt{Q}$-system specializes to
the following:
$$\left[\frac{\alpha_i}{2}\right]Q_{i,aq^{-1}}\wt{Q}_{i,aq} -
\left[-\frac{\alpha_i}{2}\right] Q_{i,aq}\wt{Q}_{i,aq^{-1}} =
\prod_{j\sim i}Q_{j,a}.$$
This matches \cite[Formula (4.6)]{MRV1}.

(iii) An analogous system may be written for the category
$\mathcal{O}^*$ dual to the category $\mathcal{O}$, which was
defined in \cite[Section 3.6]{HJ}.
It suffices to set $Q_{i,a} = [R_{i,a}^+]$ and $\wt{Q}_{i,a} =
[X_{i,aq_i^{-2}}'](\chi_i')^{-1}\left[\frac{\alpha_i}{2}\right]$ 
and to replace $\left[\pm\frac{\alpha_i}{2}\right]$ by
$\left[\mp\frac{\alpha_i}{2}\right]$. Here $R_{i,a}^+$ and $X_{i,a}'$
are defined by $(R_{i,a}^+)^*\simeq L_{i,a}^+$, $(X_{i,a}')^*\simeq
X_{i,a}$ and $\chi_i' = \chi(R_{i,a}^+)$. Indeed the same proof as for
formula \eqref{relation1} gives us a relation in the category
$\overline{\mathcal{O}}$ of the opposite 
Borel as considered in \cite[Section 3.5]{FH} with the representations
$L_{i,a}^+$, $X_{i,a}$ replaced by the simple representations in
$\overline{\mathcal{O}}$ with the same highest $\ell$-weight. Then it
suffices to twist by the involutive automorphism $\wh{\omega}$ as in \cite{FH}.}
\qed\end{rem}

\subsection{First examples}\label{firstex}

Let $\Glie = \sw_2$. We have the following relation in
$K_0(\mathcal{O})$:
$$[L_a^+][L_a^-] - [-\alpha][L_{aq^2}^+][L_{aq^{-2}}^-] = \chi$$
where as above
$$\chi = \chi(L_a^+) = \sum_{r\geq 0}[-r\alpha]\in K_0(\mathcal{O}).$$
does not depend on $a$.
By setting 
$$Q_a = [L_a^+]\text{ and }\wt{Q}_a = [L_{aq^{-2}}^-]\chi^{-1}
\left[-\frac{\alpha}{2}\right]$$ 
we get the $Q\wt{Q}$-relation:
$$\left[\frac{\alpha}{2}\right]Q_{aq^{-1}}\wt{Q}_{aq} -
\left[-\frac{\alpha}{2}\right] Q_{aq}\wt{Q}_{aq^{-1}} = 1$$
which is essentially the ``quantum Wronskian relation'' \cite[Formula
(9)]{BLZ} (see also \cite{BLZ2, BLZ3}).

Let $\Glie = \sw_3$. We consider $6$ families of representations as in
\cite{BHK}: the prefundamental representations $L_{1,a}^+$,
$L_{2,a}^+$, $L_{1,a}^-$, $L_{2,a}^-$ and the new representations
$$X_{1,a} = L(\Psib_{1,a}^{-1}\Psib_{2,aq})\text{ , }X_{2,a} =
L(\Psib_{2,a}^{-1}\Psib_{1,aq}).$$
Let
$$\chi_1 = \chi(L_{1,a}^+) = \sum_{0\leq r\leq s}[-r\alpha_2 -
s\alpha_1] = \text{ and }\chi_2  =  \chi(L_{2,a}^+)=  \sum_{0\leq
  r\leq s}[-r\alpha_1 - s\alpha_2].$$

\begin{rem} {\rm The representations $X_{1,a}$ and $X_{2,a}$ have
      the same character in this case (this is not true for general
      $\g$):
$$\chi(X_{1,a}) = \chi(X_{2,a}) = \sum_{\lambda,\mu\geq
  0}(1+\text{Min}(\lambda,\mu))[-\lambda\alpha_1-\mu\alpha_2],$$ which
is the character of the Verma module of $\sw_3$ of highest weight
$0$. However, the representations $X_{1,a}$, $X_{2,a}$ are not
evaluation modules of this Verma module as the action of $U_q(\wh\bo)$
can not be extended to the full quantum affine algebra.
}
\qed\end{rem}

We view $\chi_1, \chi_2$ as elements of $K_0(\mathcal{O})$. Now we can
define the $Q$ and $\wt{Q}$ variables:
$$Q_{1,a} = [L_{1,a}^+]\text{ , }  \qquad \wt{Q}_{1,a} = [
X_{1,aq^{-2}}]\chi_1^{-1}\left[-\frac{\alpha_1}{2}\right],$$
$$Q_{2,a} = [L_{2,a}^+]\text{ , }  \qquad \wt{Q}_{2,a} = [
X_{2,aq^{-2}}]\chi_2^{-1}\left[-\frac{\alpha_2}{2}\right] .$$
Then get the $Q\wt{Q}$-system in $\text{Frac}(K_0(\mathcal{O}))$ as
in \cite[Formulas (5.4), (5.5)]{BHK}: 
$$\left[\frac{\alpha_1}{2}\right]Q_{1,aq^{-1}}\wt{Q}_{1,aq} -
\left[-\frac{\alpha_1}{2}\right] Q_{1,aq}\wt{Q}_{1,aq^{-1}} =
Q_{2,a}.$$
$$\left[\frac{\alpha_2}{2}\right]Q_{2,aq^{-1}}\wt{Q}_{2,aq} -
\left[-\frac{\alpha_2}{2}\right] Q_{2,aq}\wt{Q}_{2,aq^{-1}} =
Q_{1,a}.$$

Let $\Glie=B_2$. We have given in section \ref{dualex} an explicit
formula for $\chi_2$ and a description of the representation
$L_{2,a}^+$. The corresponding $Q\wt{Q}$-relation is
$$\left[\frac{\alpha_2}{2}\right]Q_{2,aq^{-1}}\wt{Q}_{2,aq} -
\left[-\frac{\alpha_2}{2}\right] Q_{2,aq}\wt{Q}_{2,aq^{-1}}
= Q_{1,aq^{-1}}Q_{1,aq}$$
where the $Q_{i,a}$ are classes of prefundamental representations and
$\wt{Q}_{2,a} = [X_{2,aq^{-2}}]\chi_2^{-1}
\left[-\frac{\alpha_2}{2}\right]$.

\begin{rem}\label{exbf} Vladimir Bazhanov drew out attention to a system of
  relations in the case of $\sw_{n+1}$ which were introduced in
  \cite[Equation (1.3)]{BFLMS} in the context of finite-dimensional
  representations of the corresponding Yangians, which is equivalent
  to ours (for example, the analogues of the prefundamental
  representations for the Yangians of $sl_{n+1}$ are described in
  \cite{BFLMS} and called there ``partonic'' representations).
  Moreover, an explicit comparison between this system and the
  $Q\tilde{Q}$-system was kindly given to us by the referee which we
  now present (it is worth mentioning that in this case it coincides
  with the Hirota equations, according to \cite{BFLMS}). This is not
  immediately clear as the system of \cite{BFLMS} involves a seemingly
  different set of variables ${\bf Q}_{j_1,\cdots, j_i}$ with
  $j_1,\cdots, j_i\in\{1,\cdots, n\}$.  Let us pick what can be called
  a path in the Hasse diagram
	$$\mathcal{P} : \emptyset\subset\{\iota_1\}\subset
        \{\iota_1,\iota_2\}\subset\cdots \subset
        \{\iota_1,\cdots,\iota_n\} = \{1,2,\cdots,n\}.$$
Then \cite[Equation (1.3)]{BFLMS} with $I = \emptyset$ and $a =
\iota_1$, $b = \iota_2$ is the type $A$ $Q\tilde{Q}$-system with $i =
1$ after the identification of $Q_1$, $Q_2$, $\tilde{Q}_1$ with ${\bf
  Q}_{\iota_1}$, ${\bf Q}_{\iota_1, \iota_2}$, ${\bf
  Q}_{{\iota_1,\iota_2}\setminus\{\iota_1\}}$ (up to a multiplication
by a weight).
More generaly, if we consider a second path
$$\tilde{\mathcal{P}} : \emptyset\subset \{\tilde{\iota}_1\} =
\{\iota_1,\iota_2\}\setminus\{\iota_1\}
\subset \{\tilde{\iota}_1,\tilde{\iota_2}\} =
\{\iota_1,\iota_2,\iota_3\}\setminus\{\iota_2\}\subset\cdots\subset
\{1,\cdots,n\},$$
then \cite[Equation (1.3)]{BFLMS} with $I =
\{\iota_1,\cdots,\iota_{i-1}\}$ and $a = \iota_i$, $b = \iota_{i+1}$
is the $Q\tilde{Q}$-system at $i$ after the identification of $Q_j$,
$\tilde{Q}_j$ with ${\bf Q}_{\iota_1,\cdots,\iota_j}$, ${\bf
  Q}_{\tilde{\iota}_1,\cdots,\tilde{\iota}_j}$ (up to a multiplication
by a weight).
\end{rem}

\subsection{$Q\wt{Q}$-system for the twisted quantum affine
  algebras}    \label{QQ twisted}

There is also a $Q\wt{Q}$-system for the twisted quantum affine
algebras. To explain this, we use the notation of \cite[Section
2.4]{HIMRN} (except that the Lie algebra denoted by $\g$ in
\cite{HIMRN} will now be denoted by $\g'$, and $I$ will be denoted by
$I'$). Let $\sigma$ be an automorphism of the Dynkin diagram of a
simply-laced simple finite-dimensional Lie algebra $\Glie'$; that is,
a bijection $\sigma : I'\rightarrow I'$ of the set $I'$ of nodes of the
Dynkin diagram of $\Glie$ such that $C_{\sigma(i),\sigma(j)} =
C_{i,j}$ for any $i,j\in I'$, where $C$ is the Cartan matrix of
$\Glie'$. Let $r$ be the order of $\sigma$. Consider the twisted case,
so that $r\in\{2,3\}$ (in fact, $\Glie$ must be of type $A_n$ ($n\geq
2$), $D_n$ ($n\geq 4$), or $E_6$). Let $I_{\sigma}$ denote the set of
orbits of $\sigma$ and for $i\in I'$ we denote by $\overline{i}\in
I_{\sigma}$ the orbit of $i$.

Using the Cartan generators of $\gt$, we obtain an automorphism of
$\gt$ of the same order, which we also denote by $\sigma$. The Lie
algebra $\gt$ decomposes into a direct sum of eigenspaces of $\sigma$:
$$
\gt = \bigoplus_{\ol{i} \in \Z/r\Z} \gt_{\ol{i}},
$$
where $\gt_{\ol{0}}$ is the simple Lie algebra corresponding to the
quotient of the Dynkin diagram of $\gt$ by the action of the
automorphism. The twisted affine Kac--Moody algebra $\ghat$ is defined
as the universal central extension of the twisted loop algebra
$$
{\mc L}_\sigma \g = \bigoplus_{n \in \Z} \gt_{\ol{n}} \otimes z^n
$$
Note that its constant part is the simple Lie algebra $\gt_{\ol{0}}$,
the $\sigma$-invariants of $\g'$. The nodes of the Dynkin diagram of
$\gt_{\ol{0}}$ are naturally parametrized by $I_\sigma$.

There is a quantum affine algebra $U_q(\ghat)$ attached to the twisted
affine algebra $\ghat$, whose finite-dimensional representations were
studied by several authors, see \cite{HIMRN} and references therein. 
This algebra has a Borel subalgebra, and one
can define the corresponding category ${\mc O}$ in the same way as in
\cite{HJ} (see Section \ref{catO}). Though this category has not been
studied in the twisted case, it is natural to conjecture that it
contains analogues of the representations $L^+_{i,a}$ and $X_{i,a}$
defined in \cite{HJ} and the present paper, respectively, in the
untwisted case.

More precisely, for each $\overline{i}\in I_{\sigma}$, let us choose a
representative $i\in I'$ in such a way that
that $$(C_{i,j},C_{i,\sigma(j)},\cdots,C_{i,\sigma^{M-1}(j)})\neq
(0,\cdots,0)\Rightarrow C_{i,j} = -1.$$ We fix such a choice and
identify $\overline{i}$ and $i$ using this choice. Hence $C_{i,j}$ is
well-defined for any $i,j\in I_\sigma$. Then we expect that in the
category ${\mc O}$ in the twisted case there are representations
$L^+_{i,a}$ and $X_{i,a}$ for all $i\in I_\sigma$ and $a \in
\C^\times$.

Now let us define $Q_{i,a}$ and $\wt{Q}_{i,a}$ by formulas \eqref{Q
  and wtQ}, and for each $i\in I_{\sigma}$, set $q_i = q^{d_i}$
where
$$d_i = 
\begin{cases}
r    \text{ if $C_{i,\sigma(i)} = 2$,}
\\1  \text{ if $C_{i,\sigma(i)} = 0$,}
\\1/2\text{ if $C_{i,\sigma(i)} = -1$.}
\end{cases}$$

\begin{conj}    \label{QQ t}
  The variables $Q_{i,a}$, $\wt{Q}_{i,a}, i\in I_\sigma, a\in\CC^*$
  satisfy the following $Q\wt{Q}$-system:
\begin{equation*}
\left[\frac{\alpha_i}{2}\right]Q_{i,aq_i^{-1}}\wt{Q}_{i,aq_i}
  - \left[-\frac{\alpha_i}{2}\right] Q_{i,aq_i}\wt{Q}_{i,aq_i^{-1}}
\end{equation*}
\begin{equation}    \label{QQ tw}
\begin{split}
  = \begin{cases} \left(\prod_{ j\sim i, d_j = r}Q_{j,a}\right)
    \left(\prod_{j\sim i,d_j \neq r\text{ and } b,b^r =
        a}Q_{j,b}\right)&\text{ if $d_i = r$,} \\ \left(\prod_{j\sim
        i, d_j = r}Q_{j,a^r}\right)\left(\prod_{j\sim i,d_j \neq
        r}Q_{j,a}\right)&\text{ if $d_i = 1$,} \\Q_{i,-a} \times\left(
      \prod_{j\sim i}Q_{j,a}\right)&\text{ if $d_i =
      1/2$,}\end{cases}\end{split}\end{equation}
where the products run on the $j\in I_\sigma$, $j\sim i$ means
$C_{i,j} < 0$ as above and $\alpha_i$ is a simple root of
$\Glie^\sigma$.
\end{conj}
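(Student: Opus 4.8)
The plan is to adapt the $q$-character proof of Theorem~\ref{relation} from Section~\ref{proof} to the twisted setting, once the foundations of the category $\mc O$ for $U_q(\ghat)$ with $\ghat$ twisted have been laid. The first task is to establish, for twisted $\ghat$, the analogues of Proposition~\ref{simple}, of the classification Theorem~\ref{class}, and of the character homomorphism $\chi:K_0(\mc O)\to\mc E$; the formal structure of these statements is insensitive to the twisting. One then defines $L^+_{i,a}$ as a limit of Kirillov--Reshetikhin modules $W^{(i)}_{k,a}$ exactly as in \cite{HJ}, and $X_{i,a}=L(\wt{\Psib}_{i,a})$ as the simple highest $\ell$-weight module whose highest $\ell$-weight is obtained by folding the untwisted formula, with the node-dependent $q_i=q^{d_i}$ prescribed in the statement. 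One must also verify that $\chi_i=\chi(L^+_{i,a})$ is independent of $a$, as in \eqref{chii}, so that $Q_{i,a}$ and $\wt Q_{i,a}$ are well defined.

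Second, one needs the twisted $q$-character theory: a ring homomorphism $\chi_q$ on $K_0(\mc O)$ recording $\ell$-weight multiplicities, together with the Frenkel--Mukhin/Hernandez-type properties (the image of a KR module is determined by a twisted analogue of the Frenkel--Mukhin algorithm, and $\chi_q$ is injective on the subring generated by the relevant modules). This is available for finite-dimensional twisted modules in the literature and should extend to $\mc O$ by the same limiting procedure used for $L^+_{i,a}$.

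The core of the argument is then the twisted analogue of the three-term identity underlying \eqref{relation1}: after tensoring with the appropriate one-dimensional module $[\pm\alpha_i/2]$ and dividing by $\chi_i$, one shows that $Q_{i,aq_i^{-1}}\wt Q_{i,aq_i}$ is the class of a module possessing a two-step filtration whose subobject has class $[-\alpha_i]\,Q_{i,aq_i}\wt Q_{i,aq_i^{-1}}$ and whose quotient has class equal to the right-hand side of \eqref{QQ tw}; equivalently, one computes $\chi_q$ of both sides and checks that they agree. Tracking how the $\ell$-weights $\wt{\Psib}_{i,a}$ pair with the $\Psib_{j,\,\cdot\,}$ for $j\sim i$ reproduces the three cases: when $d_i=r$ the orbit of $i$ is a fixed point and neighbouring fixed-point nodes contribute $Q_{j,a}$ while orbit-nodes contribute $Q_{j,b}$ with $b^r=a$; when $d_i=1$ the roles of the spectral parameters are interchanged; and when $d_i=1/2$ (the middle node of $A^{(2)}_{2n}$) the self-folding of that node produces the extra factor $Q_{i,-a}$.

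The main obstacle will be precisely this last node. There the highest $\ell$-weight $\wt{\Psib}_{i,a}$ involves data of the form $\Psib_{i,aq^{-1}}\Psib_{i,aq}\Psib_{i,aq^3}$ rather than a single $\Psib_{i,\,\cdot\,}^{-1}$, reflecting $C_{i,\sigma(i)}=-1$; the module $X_{i,a}$ is genuinely new and its $q$-character, together with the corresponding short exact sequence, must be computed directly, in the spirit of the $\su$ case of \cite{BLZ} but with the doubling responsible for the $Q_{i,-a}$ term. A complementary route worth pursuing is to relate \eqref{QQ tw} to Theorem~\ref{relation} by a folding argument: the untwisted $Q\wt Q$-system for $U_q(\ghat')$ carries a natural action of $\sigma$, and restricting it to $\sigma$-symmetric data while identifying spectral parameters along $\sigma$-orbits (this is where $b^r=a$ and $-a$ enter) should yield the twisted system; the subtlety is that the twisted algebra is not a subalgebra of the untwisted one, so the folding must be carried out on the level of $q$-characters rather than of modules.
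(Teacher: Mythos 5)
The statement you are addressing is stated in the paper as Conjecture \ref{QQ t}, and the paper offers no proof of it. The authors only remark that the system \eqref{QQ tw} was obtained by a kind of ``folding'' of the untwisted $Q\wt{Q}$-system of Theorem \ref{relation}, and that they ``expect that the proof of this conjecture can be obtained along the same lines as our proof in Section \ref{proof}\dots This will be discussed in another paper.'' So there is no argument in the paper against which your proposal can be checked; what you have written is essentially an expanded version of the authors' own stated expectation, including the folding heuristic.

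As a proof, however, your proposal has a genuine gap: it is a research program rather than an argument, and every load-bearing ingredient is assumed rather than established. The category ${\mc O}$ for the Borel subalgebra of a twisted quantum affine algebra has not been constructed (the paper itself notes that this category has not been studied in the twisted case and only \emph{conjectures} the existence of the analogues of $L^+_{i,a}$ and $X_{i,a}$); the twisted analogues of Theorem \ref{class}, of the $q$-character homomorphism on $K_0({\mc O})$, of Theorem \ref{formuachar}, and of the simplicity of tensor products of prefundamental representations (Theorem \ref{stensor}) are all invoked with the phrase ``should extend,'' yet these are precisely the results whose untwisted versions required substantial work in \cite{HJ,FH,HCrelle,H}. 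In particular, the proof of Proposition \ref{genfor} --- the technical heart of the untwisted argument --- rests on the truncated $q$-character lemmas, on \cite[Lemma 5.5]{HCrelle} and on \cite[Theorem 5.1]{H}, for none of which you supply a twisted substitute. You correctly flag the node with $d_i=1/2$ in $A^{(2)}_{2n}$ as the hardest case, and you correctly observe that folding cannot be performed at the level of modules since the twisted algebra is not a subalgebra of the untwisted one; but identifying these obstacles is not the same as overcoming them. In short, the proposal is a reasonable plan, consistent with the authors' declared intentions, but it does not constitute a proof, and the paper does not contain one either.
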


We note that we have obtained the system \eqref{QQ tw} by a kind of
``folding'' of the $Q\wt{Q}$-system for the simply-laced Lie algebra
$\g'$ (analogously to how the $T$-system in the twisted case was
written in \cite{KS2} and established in \cite{HIMRN} by ``folding''
the $T$-system in the untwisted case).

We expect that the proof of this conjecture can be obtained along the
same lines as our proof in Section \ref{proof} of the $Q\wt{Q}$-system
in the case of untwisted quantum affine algebras. This will be discussed
in another paper.

\subsection{$QQ^*$-system}    \label{QQstar}

Another system of relations in $K_0({\mc O})$ was established in
\cite[Section 6.1.3]{HL}. It arises naturally in the context of
cluster algebras, as the first step of the Fomin--Zelevinsky mutation
relations. Namely, for $i\in I$ and $a\in\CC^\times$ we have
$$Q_{i,a}Q_{i,a}^*  = \prod_{j,C_{j,i}\neq 0}Q_{j,aq^{-d_jC_{j,i}}} 
+ [-\alpha_i]\prod_{j,C_{j,i}\neq 0}Q_{j,aq^{d_j C_{j,i}}}$$
where 
$$Q_{i,a}^* = [L(\Psib_{i,a}^{-1}\prod_{j,C_{j,i} \neq
  0}\Psib_{j,aq^{-d_jC_{j,i}}})]\text{ , }Q_{i,a} = [L_{i,a}^+].$$ This
  is the relation \cite[Equation 6.14]{HL}. Note that $Q_{i,a}^*$ is
  the mutated cluster variable obtained from $Q_{i,a}$ in a certain
  subring of $K_0({\mc O})$, which was introduced and proved to be a
  cluster algebra in \cite{HL}.

To distinguish them from
  the relations of the $Q\wt{Q}$-system, we call these relations the
  $QQ^*$-system.

  An analogous system of relations was also established in
  \cite[Example 7.8]{HL} in terms of the negative prefundamental
    representations. Taking the duals in those relations, we get the
    $QQ^*$-system in the Grothendieck ring of the dual category
    $\mathcal{O}^*$, with the variables $Q_{i,a} = [R_{i,a^{-1}}^+]$,
    $Q_{i,a}^* = L(\Psib_{i,a^{-1}}^{-1}\prod_{j,C_{j,i} \neq
      0}\Psib_{j,a^{-1}q^{d_jC_{j,i}}})$ and with $[-\alpha_i]$
    replaced by $[\alpha_i]$.

\subsection{$Q\wt{Q}$-system for the quantum ${\mathfrak g}{\mathfrak
    l}_1$ toroidal algebra}    \label{toroidal}

An analogue of the $Q\wt{Q}$-system holds for the quantum ${\mathfrak
  g}{\mathfrak l}_1$ toroidal algebra $\mathcal{E}$ as well. The
quantum parameters of $\mathcal{E}$ are $q_1$, $q_2$, $q_3$ satisfying
$(q_1q_3)^{-1} = q_2 = q^2$ and we use the notations of \cite{FJMM}
except that we replace their spectral parameter $z$ by $z^{-1}$ for
consistency.  For $a\in\CC^\times$ we introduce the representation $L_a$ of
highest $\ell$-weight
$(1-az)^{-1}(1-q_1^{-1}az)(1-q_3^{-1}az)$. $L_a^+$ denotes the
prefundamental representation of highest $\ell$-weight $(1- az)$.

If we set as above $Q_a = [L_a^+]$ and $\wt{Q}_a =
[X_{aq^{-2}}][-\frac{\alpha}{2}]\chi^{-1}$, where $\chi =
\chi(L_a^+)$, then we obtain the following $Q\wt{Q}$-system:
\begin{equation}
[\frac{\alpha}{2}]Q_{aq^{-1}}\wt{Q}_{aq} -
  [-\frac{\alpha}{2}]Q_{aq}\wt{Q}_{aq^{-1}}
=Q_{aq^{-1}q_1^{-1}}Q_{aq^{-1}q_3^{-1}},
\end{equation}
where  $[\pm\frac{\alpha}{2}]$ is the class of the one-dimensional
representation corresponding to $t^{\pm \frac{1}{2}}$.

The proof is analogous to the proof of the $Q\wt{Q}$-system presented
in the next section. One needs to use the theory of $q$-characters of
the category ${\mc O}$ of the quantum
${\mathfrak g}{\mathfrak l}_1$ toroidal algebra, which has recently
been developed in \cite{FJMM} in parallel with the theory for the
quantum affine algebras \cite{Fre,HJ,FH}.

Note that the $QQ^*$-system for quantum affine algebras discussed in
Section \ref{QQstar} also has an analogue for the ${\mathfrak
  g}{\mathfrak l}_1$ quantum toroidal algebra. This is the system of
relations obtained in \cite[Formula (4.24)]{FJMM}.

\begin{rem} We were informed by Michio Jimbo that the above
  $Q\tilde{Q}$-system for the quantum ${\mathfrak g}{\mathfrak l}_1$
  toroidal algebra was known to the authors of \cite{FJMM}.
\qed\end{rem}

\section{Proof of the $Q\wt{Q}$-system}    \label{proof}

In this section we prove the $Q\wt{Q}$-system \eqref{relation1} stated
in Theorem \ref{relation}. One of the crucial tools used in the proof
is the theory of $q$-characters.

\subsection{$q$-characters}\label{qchar}

For a $U_q(\mathfrak{b})$-module $V$ and $\Psib\in\tb_\ell^*$, 
the linear subspace
\begin{align}
V_{\Psibs} =
\{v\in V\mid
\exists p\geq 0, \forall i\in I, 
\forall m\geq 0,  
(\phi_{i,m}^+ - \Psi_{i,m})^pv = 0\}
\label{l-wtsp} 
\end{align}
is called the $\ell$-weight space of $V$ of $\ell$-weight $\Psib$.

\begin{thm}\cite{HJ} For $V$ in category $\mathcal{O}$, $V_{\Psib}\neq
  0$ implies $\Psib\in\mfr$.
\end{thm}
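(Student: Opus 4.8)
The plan is to peel the general statement down, by Jordan--H\"older theory and a factorization of rational $\ell$-weights, to the single assertion that the prefundamental modules $L_{i,a}^{\pm}$ carry only rational $\ell$-weights. First I would reduce to simple modules. The category $\mc{O}$ is closed under subquotients: any submodule $W\subseteq V$ is stable under the $k_i$, hence graded, $W=\bigoplus_\omega (W\cap V_\omega)$, so it inherits Cartan-diagonalizability, finite-dimensional weight spaces, and a bounded set of weights, and the same holds for quotients. Fix $\Psib$ with $V_{\Psib}\neq 0$ and put $\omega=\varpi(\Psib)$, so $V_{\Psib}\subseteq V_\omega$ with $\dim V_\omega<\infty$. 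The commuting operators $\phi_{i,m}^+$ preserve $V_\omega$, so their joint generalized eigenvalues on $V_\omega$ are exactly the $\ell$-weights carried by the finitely many composition factors meeting $V_\omega$ (the Jordan--H\"older multiplicities being well defined, as in \cite[\S 9.6]{ka}). Each such factor is a simple module $L(\Psib^{0})$ lying in $\mc{O}$, and by Theorem \ref{class} its highest $\ell$-weight satisfies $\Psib^{0}\in\mfr$. Thus it suffices to prove the statement for such simple modules.

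Next I would factorize. Each component $\Psi_i^{0}(z)$ is rational with $\Psi_i^{0}(0)=\Psi_{i,0}^{0}\neq 0$, so it has neither zero nor pole at $z=0$ and splits into finitely many linear factors $(1-az)^{\pm1}$ times the constant $\Psi_{i,0}^{0}$. Reading these factors off slot by slot exhibits $\Psib^{0}$ as a product of a constant $\ell$-weight $\Psib_\omega$ and finitely many highest $\ell$-weights $\Psib_{i,a}^{\pm1}$ of prefundamental modules. Applying repeatedly that $L(\Psib\Psib')$ is a subquotient of $L(\Psib)\otimes L(\Psib')$, I realize $L(\Psib^{0})$ as a subquotient of $[\omega]\otimes\bigotimes_k L_{i_k,a_k}^{\pm}$, an object of the monoidal category $\mc{O}$. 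Since the coproduct on the currents is triangular, $\Delta(\phi_i^+(z))\in\phi_i^+(z)\otimes\phi_i^+(z)+(\text{terms strictly raising the weight in one tensor factor})$, the $\ell$-weights of any tensor product are products of $\ell$-weights of the factors. As $\mfr$ is a group and the sole $\ell$-weight of $[\omega]$ is the rational $\Psib_\omega$, the entire theorem collapses to the claim that every $\ell$-weight of $L_{i,a}^{\pm}$ lies in $\mfr$.

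This last point is where the real work lies, and it is the main obstacle. I would deduce it from the asymptotic realization of $L_{i,a}^{\pm}$ as a limit of Kirillov--Reshetikhin modules $W_{k,a}^{(i)}$: the $\ell$-weights of each $W_{k,a}^{(i)}$ are monomials in the $Y_{j,b}^{\pm1}$, hence rational by the $q$-character theory of finite-dimensional representations \cite{Fre}, and the substance is to show that in the limit $k\to\infty$ these monomials assemble into genuine rational functions rather than merely formal power series. Exactly this phenomenon is visible already for the highest $\ell$-weight, where the infinite product $\Psib_{i,a}=\wt{Y}_{i,aq_i}\wt{Y}_{i,aq_i^3}\wt{Y}_{i,aq_i^5}\cdots$ of rational monomials collapses to the polynomial $(1-az)$; the task is to control the analogous collapse for all the lower $\ell$-weights uniformly. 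Establishing that the passage from the $W_{k,a}^{(i)}$ to the prefundamental modules preserves rationality of every $\ell$-weight is the step that genuinely requires the explicit asymptotic analysis of \cite{HJ}, as opposed to the purely formal reductions carried out above.
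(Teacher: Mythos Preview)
The paper does not give its own proof of this statement; it is quoted from \cite{HJ} without argument. So there is no in-paper proof to compare against, and your proposal should be read as a reconstruction of (part of) the argument in \cite{HJ}.

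Your reduction is correct and standard: pass to simple composition factors (legitimate since $\mc O$ is closed under subquotients and the $\ell$-weight spaces sit inside finite-dimensional ordinary weight spaces), invoke Theorem~\ref{class} to get $\Psib^0\in\mfr$, factor $\Psib^0$ into $\Psib_\omega$ and finitely many $\Psib_{i,a}^{\pm1}$, and use the subquotient-of-tensor-product trick together with the triangularity of $\Delta(\phi_i^+(z))$ to conclude that it suffices to treat $[\omega]$ and $L_{i,a}^{\pm}$.

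For the remaining step you are more cautious than necessary. You do not need to redo the asymptotic limit of KR modules from scratch: the paper records the exact $q$-character formulas you need as Theorem~\ref{formuachar}. Part (i) says every $\ell$-weight of $L_{i,a}^+$ is $\Psib_{i,a}$ times a constant $\ell$-weight (hence rational), and part (ii) says every $\ell$-weight of $L_{i,a}^-$ is $\Psib_{i,a}^{-1}$ times a Laurent monomial in the $A_{j,b}$, each of which lies in $\mfr$. So the ``collapse to rational functions'' you flag as the main obstacle is already packaged in Theorem~\ref{formuachar}, and you may simply cite it. With that substitution your argument is complete.
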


Given a map $c : \mfr\rightarrow \ZZ$, consider its support
\[
 \text {supp}(c) = \{\Psib \in  \mfr \mid c(\Psib) \not = 0\}.
\]
Let
$\mathcal{E}_\ell$ 
be the additive group of maps
$c : \mfr\rightarrow \ZZ$  
such that $\varpi(\text{supp}(c))$ is contained in a finite union
of sets of the form $D(\mu)$, and such that 
for every $\omega\in P_\Q$, 
the set $\text{supp}(c) \cap \varpi^{-1}(\{\omega\})$
is finite.
The map $\varpi$ is naturally extended to a surjective homomorphism 
$\varpi : \mathcal{E}_\ell\rightarrow \mathcal{E}$.

For $\Psib\in\mfr$, we define $[\Psib] = \delta_{\Psibs,.}\in\mathcal{E}_\ell$.

Let $V$ be a $U_q(\mathfrak{b})$-module in category $\mathcal{O}$. 
We define \cite{Fre, HJ} the $q$-character of $V$ as
\begin{align}
\chi_q(V) = 
\sum_{\Psibs\in\mfr}  
\mathrm{dim}(V_{\Psibs}) [\Psib]\in \mathcal{E}_\ell\,.
\label{qch}
\end{align}

\begin{example}\label{onedim} For $\omega\in\tb^\times$, the $q$-character
  of the $1$-dimensional representation
  $[\omega]$ is just its $\ell$-highest weight $\chi_q([\omega]) =
  [\omega]$. That is why the use of the same notation $[\omega]$ will
  not lead to confusion.
\end{example}

Note that we have $\chi(V) = \varpi(\chi_q(V))$ for $V$ a representation in the category $\mathcal{O}$.

By \cite[Theorem 3]{Fre} and \cite[Proposition
3.12]{HJ}, we have the following.
  
\begin{prop} The $q$-character morphism 
$$
\chi_q : \text{Rep}(U_q(\mathfrak{b}))\rightarrow
\mathcal{E}_\ell,\quad [V]\mapsto \chi_q(V),
$$
is an injective ring morphism.
\end{prop}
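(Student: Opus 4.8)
The plan is to prove the two assertions — that $\chi_q$ is a ring homomorphism and that it is injective — separately, both resting on the generalized simultaneous eigenspace decomposition of category $\mathcal{O}$ modules under the commuting family $\{\phi_{i,m}^+\}_{i\in I,\,m\geq 0}$. First I would settle well-definedness and additivity. Since the $\phi_{i,m}^+$ pairwise commute, the assignment $V\mapsto V_{\Psibs}$ of \eqref{l-wtsp} is an exact functor (it picks out the canonical direct summand given by the generalized joint eigenspace for the scalars $\Psi_{i,m}$), so $\dim V_{\Psibs}$ is additive on short exact sequences and $\chi_q$ descends to a group homomorphism on $K_0(\mathcal{O})$. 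That its image lands in $\mathcal{E}_\ell$ follows from the defining conditions of category $\mathcal{O}$: condition (iii) places $\varpi(\mathrm{supp}(\chi_q(V)))$ inside a finite union of sets $D(\mu)$, while condition (ii), together with the identity $\chi(V)=\varpi(\chi_q(V))$ and the theorem that $V_{\Psibs}\neq 0\Rightarrow\Psib\in\mfr$, forces each fiber $\mathrm{supp}(\chi_q(V))\cap\varpi^{-1}(\omega)$ to be finite.

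Next I would establish multiplicativity. Here the key structural input is the triangular behavior of the Drinfeld Cartan currents under the coproduct: modulo terms that strictly lower the $\ell$-weight, $\Delta(\phi_i^+(z))$ acts on $V\otimes W$ as $\phi_i^+(z)\otimes\phi_i^+(z)$. Consequently, if $v\in V_{\Psibs}$ and $w\in W_{\Psibs'}$ then $v\otimes w$ lies in the generalized $\ell$-weight space of $\ell$-weight $\Psib\Psib'$ up to contributions of strictly smaller $\varpi$-weight. Ordering the joint $\ell$-weight vectors compatibly with the $\varpi$-order makes the resulting change-of-basis matrix unitriangular, whence $\dim(V\otimes W)_{\Psibs}=\sum_{\Psib_1\Psib_2=\Psibs}\dim V_{\Psibs_1}\,\dim W_{\Psibs_2}$. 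Since the product in $\mathcal{E}_\ell$ is $[\Psib_1][\Psib_2]=[\Psib_1\Psib_2]$, this is exactly $\chi_q(V\otimes W)=\chi_q(V)\chi_q(W)$.

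For injectivity I would exploit unitriangularity of the $q$-characters of the simple modules. Each $L(\Psib)$ has a one-dimensional highest $\ell$-weight space, and all its remaining $\ell$-weights $\Psib'$ satisfy $\varpi(\Psib')<\varpi(\Psib)$ in the root order on $P_\Q$; hence $\chi_q(L(\Psib))=[\Psib]+(\text{strictly lower }\varpi\text{-weight terms})$. Given a class $\chi=\sum_{\Psibs}m_{\Psibs}[L(\Psib)]$ with $\chi_q(\chi)=0$, I would recover the coefficients by peeling: among the $\Psib$ with $m_{\Psibs}\neq 0$ choose one that is maximal for the $\varpi$-order; the coefficient of $[\Psib]$ in $\chi_q(\chi)$ is then precisely $m_{\Psibs}$, forcing $m_{\Psibs}=0$, and one iterates. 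The finiteness axioms defining $\mathcal{E}_\ell$ guarantee that this recursion is well posed at each $\varpi$-level and that no infinite cancellation interferes.

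The main obstacle I expect is the multiplicativity step, as it is the only place requiring genuine structural input rather than bookkeeping: one must control the action of $\Delta(\phi_i^+(z))$ on the possibly infinite-dimensional module $V\otimes W$ and justify the unitriangularity of the joint $\ell$-weight decomposition in that setting. The remaining care lies in the passage from finite-dimensional representations \cite{Fre} to category $\mathcal{O}$ \cite{HJ}: every manipulation must be carried out inside $\mathcal{E}_\ell$, whose finiteness conditions are exactly what make both the tensor-product sum and the triangular inversion of the injectivity argument converge and terminate.
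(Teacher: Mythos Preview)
The paper does not give its own proof of this proposition; it simply cites \cite[Theorem 3]{Fre} for the finite-dimensional case and \cite[Proposition 3.12]{HJ} for the extension to category $\mathcal{O}$. Your outline is a faithful reconstruction of the arguments underlying those references: additivity from exactness of the generalized $\ell$-weight space functor, multiplicativity from the upper-triangularity of $\Delta(\phi_i^+(z))$ with respect to the weight order (this is the content of \cite[Proposition 3.12]{HJ} in the Borel setting), and injectivity from the unitriangularity $\chi_q(L(\Psib))\in[\Psib](1+\text{lower terms})$. Your identification of the multiplicativity step as the only place requiring genuine structural input is exactly right, and the finiteness bookkeeping you describe for the injectivity recursion is what the axioms of $\mathcal{E}_\ell$ and of $K_0(\mathcal{O})$ are designed to guarantee.
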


It is proved in \cite{Fre} that a finite-dimensional
$U_q(\wh{\Glie})$-module $V$ satisfies 
$V = \bigoplus_{m\in\mathcal{M}} V_{m\left(\phi(z)\right)}$.
In particular, $\chi_q(V)$ can be viewed  
as an element of $\Yim$. It is proved in \cite{Fre, Fre2} that
if moreover $V = L(m)$ is simple, then 
$$\chi_q(L(m))\in m (1 + \ZZ[A_{i,a}^{-1}]_{i\in I, a\in\CC^\times}).$$

\begin{thm}\label{formuachar}
(i)  For any $a\in\CC^\times$, $i\in I$ we have
$$\chi_q(L_{i,a}^+) = \left[\Psib_{i,a}\right] \chi(L_{i,a}^+) = \left[\Psib_{i,a}\right] \chi(L_{i,a}^-).$$

(ii)  For any $a\in\CC^\times$, $i\in I$ we have
$$\chi_q(L_{i,a}^-)\in \left[\Psib_{i,a}^{-1}\right] (1 + A_{i,a}^{-1}\ZZ[[A_{j,b}^{-1}]]_{j\in I, b\in\CC^\times}).$$
\end{thm}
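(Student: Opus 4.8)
The plan is to obtain both statements from the asymptotic realization of the prefundamental representations established in \cite{HJ}, combined with the structure of $q$-characters of finite-dimensional modules recalled just above the statement: $\chi_q(L(M))\subset M(1+\ZZ[A^{-1}_{j,c}])$ for $M$ a dominant monomial, and the Frenkel--Mukhin theorem, which for a fundamental representation sharpens this to $\chi_q(L(Y_{i,c}))\in Y_{i,c}(1+A^{-1}_{i,cq_i}\ZZ[A^{-1}_{j,c'}])$.

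For (i), recall that $L^+_{i,a}$ is the limit, as $k\to\infty$, of the Kirillov--Reshetikhin modules $W^{(i)}_{k,aq_i}$ twisted by the one-dimensional representations $[-k\omega_i]$: the highest $\ell$-weight of $[-k\omega_i]\otimes W^{(i)}_{k,aq_i}$ is the truncated product $\wt Y_{i,aq_i}\wt Y_{i,aq_i^3}\cdots \wt Y_{i,aq_i^{2k-1}}$, which converges to $\Psib_{i,a}$ by the factorization recorded in the Remark. In $\chi_q(W^{(i)}_{k,aq_i})$ the highest monomial $M_k$ and every factor $A^{-1}_{j,c}$ that occurs carry spectral parameters that escape (to $0$, or to $\infty$) as $k\to\infty$; matching these against the moving linear factors inside $[-k\omega_i]M_k$, one checks that in the limit each product collapses, for a fixed classical weight $\omega$, to the single constant-shifted $\ell$-weight $\Psib_{i,a}[\omega]$, with multiplicity the dimension of the $\omega$-weight space. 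This is exactly $\chi_q(L^+_{i,a})=[\Psib_{i,a}]\chi(L^+_{i,a})$. The equality $\chi(L^+_{i,a})=\chi(L^-_{i,a})$ is established in \cite{HJ}; it is also visible from the realization, since the classical character of $[-k\omega_i]\otimes W^{(i)}_{k,b}$ does not depend on $b$ and stabilizes with $k$, and $L^-_{i,a}$ arises from an analogous limiting procedure. (Both halves of (i) are in fact proved in \cite{HJ}; we only indicate the mechanism.)

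For (ii), the inclusion $\chi_q(L^-_{i,a})\in[\Psib^{-1}_{i,a}](1+\ZZ[[A^{-1}_{j,b}]])$ is the category-$\mc O$ analogue of the Frenkel--Mukhin statement and can be quoted from \cite{HJ,FH}; the formal power series appear because $L^-_{i,a}$ is infinite-dimensional, convergence being ensured by the finiteness condition built into $\mc E_\ell$. The remaining point is that every nontrivial monomial is divisible by $A^{-1}_{i,a}$, which I would get from the highest-weight structure. Let $v$ be a highest $\ell$-weight vector of $L^-_{i,a}$. Since $(\Psib^{-1}_{i,a})_j(z)=1$ for every $j\ne i$, a standard argument with the Drinfeld relations (as in \cite{HJ}) shows that $x^-_{j,r}v=0$ for all such $j$ and all $r$; hence, as a module over the lowering Drinfeld generators, $L^-_{i,a}$ is spanned by $v$ together with the submodule generated by $x^-_{i,r}v$ for a single $r$ with $x^-_{i,r}v\neq 0$. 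The $\ell$-weight of $x^-_{i,r}v$ is $\Psib^{-1}_{i,a}A^{-1}_{i,a}$ up to a constant $\ell$-weight, the parameter $a$ being the one selected by the Frenkel--Mukhin rule from the $i$-component $(1-az)^{-1}$ of the highest $\ell$-weight. Therefore every $\ell$-weight of $L^-_{i,a}$ other than $\Psib^{-1}_{i,a}$ lies in $\Psib^{-1}_{i,a}A^{-1}_{i,a}\ZZ[[A^{-1}_{j,b}]]$, which is the claim.

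The main obstacle is precisely this divisibility in (ii): one must justify carefully that a trivial $j$-component of the highest $\ell$-weight annihilates the lowering operators $x^-_{j,r}$ (so that everything below the top ``factors through'' the node-$i$ lowering), and that the $\ell$-weight of $x^-_{i,r}v$ is $\Psib^{-1}_{i,a}A^{-1}_{i,a}$ rather than $\Psib^{-1}_{i,a}A^{-1}_{i,c}$ for some other $c$. Everything else is routine bookkeeping with $q$-characters and limits, already in place in \cite{Fre,Fre2,HJ,FH}.
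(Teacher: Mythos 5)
Part (i) and the first assertion of part (ii) are treated exactly as the paper treats them: both are quoted from \cite{HJ,FH}, and your sketch of the limit-of-KR-modules mechanism is consistent with those references. Where you diverge is the key point of (ii), the divisibility of every non-highest $\ell$-weight by $A_{i,a}^{-1}$. The paper gets this by recalling from \cite{HJ} that $[\Psib_{i,a}]\chi_q(L_{i,a}^-)$ is a limit of $q$-characters of KR modules (as a formal power series in the $A_{j,b}^{-1}$) and then invoking \cite[Lemma 4.4]{HCrelle}, which controls \emph{all} monomials of the $q$-character of a KR module and in particular shows that every non-highest monomial is divisible by the appropriate $A_{i,\bullet}^{-1}$; this property survives the limit. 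Your replacement argument via the Drinfeld lowering operators does not reach that far.

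Concretely, there are two gaps. First, even granting that $x^-_{j,r}v=0$ for $j\neq i$ (which is plausible via the triangular decomposition of $U_q(\bo)$ and $[x^+_{j,m},x^-_{j,r}]v=\tfrac{\Psi_{j,m+r}}{q_j-q_j^{-1}}v=0$) and that the $\ell$-weights in the weight space of weight $\varpi(\Psib_{i,a}^{-1})\ol{\alpha}_i^{-1}$ are all $\Psib_{i,a}^{-1}A_{i,a}^{-1}$ up to constants, this only controls depth one. The theorem concerns all $\ell$-weights: at depth two and beyond, the fact that every vector lies in $U_q(\bo)^-\cdot W$ with $W=\mathrm{span}\{x^-_{i,r}v\}$ does not force the corresponding generalized $\phi^+$-eigenvalues to be divisible by $A_{i,a}^{-1}$, since $\ell$-weight spaces are generalized eigenspaces and do not ``remember'' through which chain of Drinfeld generators a vector was produced; a priori a depth-two $\ell$-weight could be $\Psib_{i,a}^{-1}A_{i,c}^{-1}A_{j,d}^{-1}$ with $c\neq a$. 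Second, the identification of the spectral parameter in the depth-one factor as $a$ (rather than some other $c$) is asserted by appeal to ``the Frenkel--Mukhin rule,'' but that rule is formulated for dominant monomials and finite-dimensional modules and does not apply to the highest $\ell$-weight $(1-az)^{-1}$; what is needed there is the explicit rank-one computation of $\chi_q(L_a^-)$ for $\wh{\sw}_2$ from \cite{HJ} (as in Example \ref{ex-calcul}) together with restriction to the $i$-th node. Both gaps are closed simultaneously by the paper's route through KR modules and \cite[Lemma 4.4]{HCrelle}, which is why that detour is not optional here.
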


\begin{rem}{\rm
(i) The statement (i) is proved in \cite{HJ, FH}. 

(ii) The statement (ii) is proved in \cite{HJ}: indeed it is established there that $\left[\Psib_{i,a}\right]\chi_q(L_{i,a}^-)$
is a certain limit of $q$-characters of KR modules as a formal power series in the $A_{j,b}^{-1}$.
It is of the form written in the Theorem by \cite[Lemma 4.4]{HCrelle}.

(iii) As a consequence the $\chi_i\in\mathcal{E}$ defined in formula (\ref{chii}) is equal to
$$\chi_i = \chi(L_{i,a}^+) = \chi(L_{i,a}^-) = \left[\Psib_{i,a}^{-1}\right]\chi_q(L_{i,a}^+).$$}
\qed\end{rem}

\begin{example}\label{ex-calcul}
{\rm
In the case $\wh{\Glie} = \widehat{\sw}_2$, we have:
\[
\chi_q(L_{1,a}^+) = [(1 - za)]\sum_{r\geq 0} [-2r\omega_1]\text{ , }\chi_q(L_{1,a}^-) = \left[\frac{1}{(1-za)}\right]\sum_{r\geq 0}
  A_{1,a}^{-1}A_{1,aq^{-2}}^{-1}\cdots A_{1,aq^{-2(r-1)}}^{-1}. 
\]
}
\end{example}

\begin{example}
{\rm
In the case of $\Glie=B_2$ (see section \ref{dualex}), we have:

$$\chi_q(L_{2,a}^+) = [(1 - za)]\sum_{T\in\mathcal{T}} [\omega_T],$$
$$\chi_q(L_{2,a}^-) =
\left[\frac{1}{(1-za)}\right]\sum_{T\in\mathcal{T}}\prod_{j\geq
  0}(A_j(T_{1,j})B_j(T_{2,j}))^{-1},$$
where $A_j(1,2,0,\overline{1}) =
(1,A_{1,aq^{-4j+2}},A_{1,aq^{-4j+2}}A_{2,aq^{-4j+4}},A_{1,aq^{-4j+2}}
A_{2,aq^{-4j+4}}A_{2,aq^{-4j+2}})$
and $B_j(2,0,\overline{1},\overline{2}) =
(1,A_{2,aq^{-4j}},A_{2,aq^{-4j}}A_{2,aq^{-4j-2}},A_{2,aq^{-4j}}A_{2,aq^{-4j-2}}A_{1,aq^{-4j}})$.
}
\end{example}

\begin{thm}\cite{FH}\label{stensor} Any tensor product of positive
  (resp. negative) prefundamental representations $L_{i,a}^+$
  (resp. $L_{i,a}^-$)
  is simple. 
\end{thm}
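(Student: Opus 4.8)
The plan is to use the injectivity of the $q$-character morphism to turn the statement into an equality in $\mathcal{E}_\ell$, and then to prove that equality by a cyclicity (and co-cyclicity) argument that exploits the special structure of prefundamental representations. Fix $i_1,\dots,i_k\in I$ and $a_1,\dots,a_k\in\CC^\times$, and set $\Psib=\prod_j\Psib_{i_j,a_j}$. Since $\chi_q$ is an injective ring morphism, by Theorem \ref{formuachar}(i) we have $\chi_q\!\left(\bigotimes_j L_{i_j,a_j}^+\right)=\prod_j\chi_q(L_{i_j,a_j}^+)=[\Psib]\prod_j\chi_{i_j}$, an element of $[\Psib]\,\mathcal{E}$; in particular $\Psib$ is the unique maximal $\ell$-weight and occurs with multiplicity one. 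The submodule $M$ generated by the tensor product $v$ of the highest $\ell$-weight vectors is a highest $\ell$-weight module of highest $\ell$-weight $\Psib$, so coefficientwise in $\mathcal{E}_\ell$ one has $\chi_q(L(\Psib))\le\chi_q(M)\le\chi_q\!\left(\bigotimes_j L_{i_j,a_j}^+\right)$. Hence the theorem is equivalent to the equality $\chi_q(L(\Psib))=\chi_q\!\left(\bigotimes_j L_{i_j,a_j}^+\right)$: granting it, $\chi_q(M)=\chi_q(L(\Psib))$ forces $M\simeq L(\Psib)$ (a highest $\ell$-weight module whose $q$-character equals that of its simple quotient is simple), and $\chi_q(M)=\chi_q\!\left(\bigotimes_j L_{i_j,a_j}^+\right)$ forces $M=\bigotimes_j L_{i_j,a_j}^+$. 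The negative case is identical, using Theorem \ref{formuachar}(ii) in place of (i).

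To prove this equality I would establish two things: that $\bigotimes_j L_{i_j,a_j}^+$ is a highest $\ell$-weight module (i.e.\ $M$ exhausts it, equivalently it is cyclic on $v$), and that it is co-cyclic. For the first, I would induct on $k$: by the inductive hypothesis a $(k-1)$-fold tensor product of positive prefundamental representations equals the simple module $L(\Psib')$ for $\Psib'=\prod_{j<k}\Psib_{i_j,a_j}$, so it suffices to prove that $L(\Psib')\otimes L_{i,a}^+$ is generated by the tensor product of highest $\ell$-weight vectors for any $i\in I$, $a\in\CC^\times$. This is the point where the special nature of prefundamental representations enters: using the triangular structure of $L_{i,a}^+$ (it is generated over its highest $\ell$-weight vector by the Drinfeld generators $x_{i,r}^-$) together with the coproduct formulas — equivalently, using that the normalized $R$-matrix $L(\Psib')\otimes L_{i,a}^+\to L_{i,a}^+\otimes L(\Psib')$ is pole-free and invertible — one shows that no resonance occurs for any value of $a$, in sharp contrast with tensor products of finite-dimensional representations, which fail to be cyclic on a codimension-one locus of the spectral parameters. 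Running the same argument in the dual category $\mathcal{O}^*$ (or in the category $\overline{\mathcal{O}}$ of the opposite Borel, twisting by $\wh\omega$ as in Remark \ref{rem 3 parts}(iii), using $R_{i,a}^+$) shows that $\bigotimes_j L_{i_j,a_j}^+$ is co-cyclic, with simple socle $L(\Psib)$.

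Finally, since $L(\Psib)$ occurs with multiplicity one among the composition factors of $\bigotimes_j L_{i_j,a_j}^+$ while being simultaneously its unique simple quotient (by cyclicity) and its socle (by co-cyclicity), it must coincide with the whole module; thus $\bigotimes_j L_{i_j,a_j}^+\simeq L(\Psib)$ is simple, and likewise for the negative prefundamental representations. I expect the genuine difficulty to lie entirely in the cyclicity/co-cyclicity step, namely the absence of resonances between prefundamental representations for \emph{all} spectral parameters — the property that distinguishes them from general objects of category $\mathcal{O}$. Its proof would proceed either by a direct analysis of the $U_q(\bo)$-action via the coproduct and the triangular decomposition, or by realizing $L_{i,a}^\pm$ as limits of Kirillov–Reshetikhin modules, as recalled after Theorem \ref{formuachar}, and passing known cyclicity results for tensor products of the latter to the limit along a direction in which the spectral parameters separate.
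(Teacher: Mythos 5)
You should first note that the paper does not prove Theorem \ref{stensor} at all: it is imported from \cite{FH} (Theorem 4.11 there), so there is no internal proof to match. Judged on its own terms, your reduction and your endgame are fine: simplicity is indeed equivalent to $\chi_q(L(\Psib))=\chi_q\bigl(\bigotimes_j L^+_{i_j,a_j}\bigr)$ by injectivity of $\chi_q$ on $K_0(\mathcal{O})$, and ``cyclic $+$ co-cyclic $+$ multiplicity one of the highest $\ell$-weight'' does imply simplicity. The genuine gap is that you have pushed the entire content of the theorem into the cyclicity/co-cyclicity step and then only named it rather than proved it. None of the three routes you offer closes it: (1) the claim that the normalized intertwiner $L(\Psib')\otimes L^+_{i,a}\to L^+_{i,a}\otimes L(\Psib')$ is pole-free and invertible for \emph{all} $a$ is essentially a restatement of the simplicity you are trying to prove, so invoking it is circular (and the action of the universal $R$-matrix on these infinite-dimensional Borel modules is itself not available without serious completion work); (2) the ``direct analysis via the coproduct and triangular decomposition'' founders because the coproduct of the Drinfeld generators is only known modulo higher-order terms, and the standard cyclicity criteria are theorems about finite-dimensional modules that come with genericity conditions on spectral parameters --- precisely what you would need to show is vacuous here; (3) the limit-of-KR-modules route is delicate because the finite-level approximations $W^{(i)}_{k,\ast}\otimes W^{(j)}_{k,\ast}$ arising in the construction of $L^+_{i,a}\otimes L^+_{j,b}$ do \emph{not} have spectral parameters that separate as $k\to\infty$, and such tensor products need not be cyclic at finite $k$; the failure only ``disappears in the limit,'' which is again the heart of the matter.

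The proof in \cite{FH} takes a different and much more economical route, entirely inside the $q$-character calculus and with no structural analysis of the action. By Theorem \ref{formuachar}(i) and multiplicativity of $\chi_q$, one has $\chi_q(T)=[\Psib]\prod_j\chi_{i_j}$ with $\prod_j\chi_{i_j}\in\mathcal{E}$, so \emph{every} $\ell$-weight of $T$ is a constant multiple of the highest one; hence every simple constituent of $T$ is a one-dimensional twist $[\omega]\otimes L(\Psib)$, and these are eliminated for $\omega\neq 0$ by comparing ($q$-)characters, the conclusion then following from the injectivity of $\chi_q$ on $K_0(\mathcal{O})$. This is the same style of argument the present paper uses to prove Proposition \ref{genfor}. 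Note finally that your remark that ``the negative case is identical, using Theorem \ref{formuachar}(ii)'' is too quick: part (ii) gives far less than part (i) --- the $\ell$-weights of $L^-_{i,a}$ are \emph{not} constant multiples of the top one, so the simple constituents of a tensor product of negative prefundamental representations are not a priori one-dimensional twists of $L(\Psib^{-1})$, and that case requires its own argument (only the multiplicity-one statement for the highest $\ell$-weight carries over unchanged).
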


\subsection{Examples} Let us explain the examples from Section
\ref{firstex} in terms of $q$-characters.

For $\Glie = \sw_2$, the relations follow directly from the
$q$-character explicit formulas given in Example \ref{ex-calcul}.

For $\Glie = \sw_3$, we can prove the following explicit $q$-character
formula (see the general result in Proposition \ref{genfor}):
$$\chi_q(X_{1,a}) = [\Psib_{1,a}^{-1}\Psib_{2,aq}] \chi_2  \sum_{r\geq
  0}(A_{1,a}A_{1,aq^{-2}}\cdots A_{1,aq^{-2(r-1)}})^{-1}$$
%$$\chi_q(X_{2,a}) = [\Psib_{2,a}^{-1}\Psib_{1,aq}] \chi_1  \sum_{r\geq
%  0}(A_{2,a}A_{2,aq^{-2}}\cdots A_{2,aq^{-2(r-1)}})^{-1}.$$
and an analog formula for $\chi_q(X_{2,a})$.

For $\Glie = B_2$, we can prove the following explicit $q$-character
formula:
$$\chi_q(X_{2,a}) = [\Psib_{2,a}^{-1}\Psib_{1,a}\Psi_{1,aq^2}] \chi_2  \sum_{r\geq
  0}(A_{2,a}A_{2,aq^{-2}}\cdots A_{2,aq^{-2(r-1)}})^{-1}.$$
This gives some insights on the structure of the representation
$X_{2,a}$: it has a basis of the $\ell$-weight vectors 
$$X_{2,a} = \bigoplus_{T\in\mathcal{T},r\geq 0} \CC v_{T,r}$$
where $v_{T,r}$ has $\ell$-weight 
$$[\Psib_{2,a}^{-1}\Psib_{1,a}\Psib_{1,aq^2}][-\omega_T]
(A_{2,a}A_{2,aq^{-2}}\cdots A_{2,aq^{-2(r-1)}})^{-1}.$$ 

\subsection{A $q$-character formula}

Our Theorem \ref{relation} is a consequence of the following

\begin{prop}\label{genfor} For any $i\in I$, $a\in\CC^\times$, we have
\begin{equation}\label{adem}\chi_q(X_{i,a} ) = [\wt{\Psib}_{i,a}]
  \chi_{i,a}  \prod_{j\neq i}\chi_j^{-C_{i,j}},\end{equation}
where
$$\chi_{i,a} = \sum_{r\geq 0}(A_{i,a}A_{i,aq_i^{-2}}\cdots
A_{i,aq_i^{-2(r-1)}})^{-1}\in\mathcal{E}_\ell.$$
\end{prop}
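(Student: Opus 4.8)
\textbf{Proof plan for Proposition \ref{genfor}.}
The plan is to identify $\chi_q(X_{i,a})$ by combining the defining highest $\ell$-weight $\wt{\Psib}_{i,a}$ with a limit argument that reduces the computation to the known $q$-characters of Kirillov--Reshetikhin modules, in exact parallel with the proof of Theorem \ref{formuachar}(ii) for $L_{i,a}^-$. First I would observe that $\wt{\Psib}_{i,a} = \Psib_{i,a}^{-1}\cdot \prod_{j}(\cdots)$ can be written as $[\wt{\Psib}_{i,a}] = [\Psib_{i,a}^{-1}]\cdot \prod_{j\neq i}[\text{(dominant monomial piece)}]$, so that the factor $\prod_{j\neq i}\chi_j^{-C_{i,j}}$ appearing on the right-hand side should be traceable to the extra $\Psib_{j,\ast}$ factors in $\wt{\Psib}_{i,a}$, each of which contributes a copy of $\chi_j$ (using Theorem \ref{formuachar}(i), which says $\chi_q(L_{j,b}^\pm) = [\Psib_{j,b}]\chi_j$, hence $\chi_j = [\Psib_{j,b}^{-1}]\chi_q(L_{j,b}^\pm)$). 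Concretely, I expect $X_{i,a}$ to arise as a subquotient of a tensor product $L_{i,a}^- \otimes \bigotimes_{j\neq i}(\text{positive prefundamentals } L_{j,\ast}^+)$ whose highest $\ell$-weight is exactly $\wt{\Psib}_{i,a}$; by the remarks after Proposition \ref{simple}, $L(\wt{\Psib}_{i,a})$ is a subquotient of this tensor product, and by Theorem \ref{stensor} the positive prefundamental part is simple.

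The second and main step is to pin down the $A^{-1}$-series factor $\chi_{i,a} = \sum_{r\geq 0}(A_{i,a}A_{i,aq_i^{-2}}\cdots A_{i,aq_i^{-2(r-1)}})^{-1}$. Here I would follow the KR-limit strategy: express $[\wt{\Psib}_{i,a}]^{-1}\chi_q(X_{i,a})$ as a limit of (suitably normalized) $q$-characters of KR modules $W_{k,\ast}^{(i)}$ as $k\to\infty$, viewed as a formal power series in the $A_{j,b}^{-1}$, exactly as \cite{HJ} does for $L_{i,a}^-$. The point is that $X_{i,a}$ differs from $L_{i,a}^-$ precisely by the extra dominant $\Psib_{j,\ast}$ factors attached to neighbors $j\sim i$, and — crucially — the monomial $\wt{\Psib}_{i,a}$ is built so that lowering by $\wt A_{i,\ast}^{-1} = [-\alpha_i]A_{i,\ast}$ stays within the $\Psib$'s (as spelled out in the Remark following the definition of $X_{i,a}$). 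So the $q$-character should factor as the highest $\ell$-weight times the \emph{same} one-variable $A_i^{-1}$-string that governs $L_{i,a}^-$ (giving $\chi_{i,a}$, with the $q_i$-shifts because $\alpha_i$ has length $2d_i$), times the $\chi_j$ contributions coming from the frozen prefundamental factors $L_{j,\ast}^+$ attached to each neighbor with multiplicity $-C_{i,j}$. Verifying that \emph{no other} $A_{j,b}^{-1}$ with $j\neq i$ enters — i.e. that the KR limit truncates to this shape — is where one invokes \cite[Lemma 4.4]{HCrelle} (as in the proof of Theorem \ref{formuachar}(ii)) together with the fact that $\chi_q(L(M))\in M(1+\Z[A_{i,a}^{-1}])$; the dominance structure of $\wt{\Psib}_{i,a}$ forces the $j\neq i$ directions to be "used up" entirely by the $\chi_j$ factors.

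The final step is bookkeeping: check that the product of the three pieces — $[\wt{\Psib}_{i,a}]$, $\chi_{i,a}$, and $\prod_{j\neq i}\chi_j^{-C_{i,j}}$ — really is an element of $\mathcal{E}_\ell$ (the supports lie in a finite union of cones $D(\mu)$ and each fiber of $\varpi$ is finite), and that applying $\varpi$ recovers the expected ordinary character, which is a consistency test against the $\sw_2$ and $\sw_3$ formulas in Section \ref{firstex}. I then deduce Theorem \ref{relation} from \eqref{adem} by substituting into the definitions \eqref{Q and wtQ} of $Q_{i,a}$ and $\wt{Q}_{i,a}$: the $\chi_j$-factors match the right-hand product $\prod_j Q_{j,\ast}$ of \eqref{relation1} up to the normalizing $\chi_i^{-1}$ and $[\pm\alpha_i/2]$ shifts, and the two terms on the left-hand side of \eqref{relation1} come from comparing $\chi_q(L_{i,aq_i^{-1}}^+)\chi_q(X_{i,a q_i})$ with $\chi_q(L_{i,aq_i}^+)\chi_q(X_{i,a q_i^{-1}})$ — the injectivity of $\chi_q$ then upgrades the $q$-character identity to an identity in $K_0(\mathcal{O})$. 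The main obstacle I anticipate is the second step: rigorously controlling the KR limit so that the $A_{j,b}^{-1}$-expansion of $\chi_q(X_{i,a})$ collapses to exactly $\chi_{i,a}\prod_{j\neq i}\chi_j^{-C_{i,j}}$ and no stray correction terms survive — this requires care with the non-simply-laced $q$-shifts $aq_j^{\pm 1}$ versus $aq_i^{\pm 1}$ in the definition of $\wt{\Psib}_{i,a}$ and $\wt A_{i,a}$.
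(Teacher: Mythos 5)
Your overall toolbox (KR limits, prefundamental tensor factorizations, \cite[Lemma 4.4]{HCrelle}) is the right one, and your first step is sound as far as it goes: $X_{i,a}$ is indeed a subquotient of $L_{i,a}^-\otimes\bigotimes_{j\neq i}(L_{j,*}^+)^{\otimes(-C_{i,j})}$, which yields $\chi_q(X_{i,a})\preceq \chi_q(L_{i,a}^-)\prod_{j\neq i}\chi_j^{-C_{i,j}}$. But this upper bound is not tight outside type $A_1$: by Theorem \ref{formuachar}(ii), $\chi_q(L_{i,a}^-)$ lies in $[\Psib_{i,a}^{-1}](1+A_{i,a}^{-1}\ZZ[[A_{j,b}^{-1}]])$ and its ordinary character is all of $\chi_i$, which is strictly larger than $\varpi(\chi_{i,a})=\sum_{r\geq 0}[-r\alpha_i]$. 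So the series multiplying $[\wt{\Psib}_{i,a}]$ in your bound is much bigger than the single $A_i$-string $\chi_{i,a}$, and "the $j\neq i$ directions are used up by the $\chi_j$ factors" is exactly the assertion that still needs proof. The paper gets the sharp upper bound differently: it introduces finite-dimensional approximations $L(\wt{\Psib}_i^{(N,M)})$ (products of $\wt{Y}$'s), computes their \emph{truncated} $q$-characters $\chi_q^{<M}$ using \cite[Lemma 5.5]{HCrelle} together with the elimination criterion of \cite[Theorem 5.1]{H} to kill every monomial off the $A_i$-string, passes to the limit $N\to-\infty$ via \cite[Theorem 6.1]{HJ}/\cite[Theorem 7.6]{HL}, and only then writes $X_{i,a}$ as a subquotient of $L(\wt{\Psib}_i^{(M)})\otimes(\text{positive prefundamentals})$. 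Your plan to "express $[\wt{\Psib}_{i,a}]^{-1}\chi_q(X_{i,a})$ as a limit of KR $q$-characters exactly as for $L_{i,a}^-$" presupposes an identification of $X_{i,a}$ with such a limit that is not available and is not how the paper proceeds.

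The more serious gap is that your plan contains no argument for the reverse inequality $\chi_q(X_{i,a})\succeq[\wt{\Psib}_{i,a}]\chi_{i,a}\prod_{j\neq i}\chi_j^{-C_{i,j}}$, i.e.\ that all of these $\ell$-weights genuinely occur in the simple module. Since every step you describe produces only subquotients of tensor products, you can only ever conclude $\preceq$. The paper's lower bound requires two separate ideas: first, $X_{i,1}\otimes L(\wt{\Psib}_{i,1}^{-1}\Psib_{i,1}^{-1})$ (a tensor product with \emph{negative} prefundamentals, simple by Theorem \ref{stensor}) admits $L_{i,1}^-$ as a constituent, and Theorem \ref{formuachar}(ii) then forces $\chi_q(X_{i,1})\succeq[\wt{\Psib}_{i,1}]\chi_{i,1}$; second, for each $\ell$-weight $\Psib'$ of $\chi_{i,1}$ the product $\Psib_{i,1}\wt{\Psib}_{i,1}\Psib'$ is dominant in the $\Psib_{j,b}$'s, so $L(\Psib_{i,1}\wt{\Psib}_{i,1}\Psib')$ is a simple tensor product of positive prefundamentals occurring as a constituent of $X_{i,1}\otimes L_{i,1}^+$, which supplies the missing factor $\chi_i\prod_{j\neq i}\chi_j^{-C_{i,j}}$ after cancelling $\chi_i$. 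Without something playing this role, your argument establishes at best an upper bound on $\chi_q(X_{i,a})$, not the equality \eqref{adem}.
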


\begin{rem}{\rm This explicit $q$-character formula implies
$$\chi(X_{i,a}) = \chi_i \prod_{j\neq i}\chi_{j}^{-C_{i,j}}.$$}
\qed\end{rem}

By using the automorphism $\tau_a$, it suffices to prove the formula for $a = 1$. 

For $m\in\ZZ$ we denote by $[m]$ its integer part.

For $N \leq 0 < M$ let us set
$$\wt{\Psib}_i^{(N,M)} = \wt{\Psib}_{i,1} \Psib_{i,q_i^{-2N}}
 \left(\prod_{j|C_{i,j} = - 1}\Psib_{j,q^{r_i +
       2r_j[1+(M-r_i)/(2r_j)]}}^{-1}\right) \cdot$$
$$\cdot
\left(\prod_{j|C_{i,j} = -2}\Psib_{j,q^{4[1+M/4]}}^{-1}\Psib_{j,q^{6 +
      4[(M-2)/4]}}^{-1}\right) \cdot$$
$$\cdot\left(\prod_{j|C_{i,j} = -3}\Psib_{j,q^{5+6[(M+1)/6]}}^{-1}\Psib_{j,q^{7 +6[(M-1)/6]}}^{-1}\Psib_{j,q^{8+6[(M-3)/6]}}^{-1}\right)$$
$$=(\wt{Y}_{i,q_i^{-1}}\wt{Y}_{i,q_i^{-3}}\cdots \wt{Y}_{i,q_i^{1 - 2N}})
 \left(\prod_{j|C_{i,j} = - 1}\wt{Y}_{j,q^{r_i +
       r_j}}\wt{Y}_{j,q^{r_i+3r_j}}\cdots \wt{Y}_{j,q^{r_i + 2
       r_j[(M-r_i)/(2r_j)]+r_j}}\right) \cdot$$
$$\cdot\left(\prod_{j|C_{i,j} =
    -2}(\wt{Y}_{j,q^{2}}\wt{Y}_{j,aq^6}\cdots \wt{Y}_{j,q^{4[M/4] +
      2}})(\wt{Y}_{j,q^4}\wt{Y}_{j,q^8}\cdots
  \wt{Y}_{j,q^{4+4[(M-2)/4]}})\right)\cdot$$
$$\cdot \left(\prod_{j|C_{i,j} = -3}
(\wt{Y}_{j,q^{2}}\wt{Y}_{j,q^8}\cdots \wt{Y}_{j,q^{2+6[(M+1)/6]}})
(\wt{Y}_{j,q^4}\wt{Y}_{j,q^{10}}\cdots \wt{Y}_{j,q^{4 +6[(M-1)/6]}})
\right. \cdot
$$
$$
\cdot \left.(\wt{Y}_{j,q^6}\wt{Y}_{j,q^{12}}\cdots
  \wt{Y}_{j,q^{6[1+(M-3)/6]}}) \right).
$$
As $\wt{\Psib}_i^{(N,M)}$ is expressed as a product of variables $\wt{Y}_{j,b}$, the
representation $L(\wt{\Psib}_i^{(N,M)})$ is finite-dimensional (see Section \ref{fdrep}). We
will also consider the $\ell$-weight $\wt{\Psib}_i^{(M)}$  obtained
from $\wt{\Psib}_i^{(N,M)}$ by removing the factors depending on
$N$, that is 
$$\wt{\Psib}_i^{(M)} = \wt{\Psib}_i^{(N,M)} \Psib_{i,q_i^{2N}}^{-1}.$$
As discussed in Section \ref{qchar}, we have
$$\chi_q(L(\wt{\Psib}_i^{(N,M)}))\in
[\wt{\Psib}_i^{(N,M)}]\ZZ[A_{j,q^r}^{-1}]_{j\in I, r\in\ZZ}.$$
As moreover it follows from Theorem \ref{formuachar} that 
$$\chi_q(\Psib_{i,q_i^{2N}}^{-1})\in [\Psib_{i,q_i^{2N}}^{-1}]\ZZ[[A_{j,q^r}^{-1}]]_{j\in I, r\in\ZZ},$$
we get
$$\chi_q(L(\wt{\Psib}_i^{(M)}))\in
[\wt{\Psib}_i^{(M)}]\ZZ[[A_{j,q^r}^{-1}]]_{j\in I, r\in\ZZ}.$$
Following \cite{HL0}, let us consider the truncated $q$-characters
$$\chi_q^{< M}(L(\wt{\Psib}_i^{(N,M)}))\in\mathcal{E}_\ell \text{ and }\chi_q^{< M}(L(\wt{\Psib}_i^{(M)}))\in\mathcal{E}_\ell$$ 
which are the sum (with multiplicity)
of the $\ell$-weights $m$ occurring in $\chi_q(L(\wt{\Psib}_i^{(N,M)}))$
(resp. $\chi_q(L(\wt{\Psib}_i^{(M)}))$) such that
$$m(\wt{\Psib}_i^{(N,M)})^{-1} \in \ZZ[A_{i,q^r}]_{i\in I , r< M}.$$

\begin{lem} We have:
$$\chi_q^{< M}(L(\wt{\Psib}_i^{(N,M)})) = [\wt{\Psib}_i^{(N,M)}]
\sum_{0\leq r\leq -N+1}(A_{i,1}A_{i,q_i^{-2}}\cdots A_{i,q_i^{-2(r-1)}})^{-1},$$
$$\chi_q^{< M}(L(\wt{\Psib}_i^{(M)})) = [\wt{\Psib}_i^{(M)}]\chi_{i,1}.$$
\end{lem}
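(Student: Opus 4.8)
The statement is a computation of the truncated $q$-character of the finite-dimensional module $L(\wt{\Psib}_i^{(N,M)})$ (and its limit $L(\wt{\Psib}_i^{(M)})$), and the strategy I would follow is to reduce it to the well-understood $q$-characters of Kirillov--Reshetikhin modules. First I would observe that, by construction, $\wt{\Psib}_i^{(N,M)}$ is a product of the monomial $\wt{Y}_{i,q_i^{-1}}\wt{Y}_{i,q_i^{-3}}\cdots\wt{Y}_{i,q_i^{1-2N}}$ — which up to a one-dimensional twist is the highest monomial of the KR module $W^{(i)}_{-N+1,q_i^{1-2N}}$ — times a ``positive'' factor $P$ involving only variables $\wt{Y}_{j,b}$ with $j\neq i$. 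So $L(\wt{\Psib}_i^{(N,M)})$ is a simple subquotient of $W^{(i)}_{-N+1,\bullet}\otimes L(P)$ (up to the one-dimensional twist that only shifts $\ell$-weights by an overall $[\,\cdot\,]$). The point of truncating at $< M$, in the sense $m(\wt{\Psib}_i^{(N,M)})^{-1}\in\ZZ[A_{i,q^r}]_{i\in I,r<M}$, is precisely that the indices $r$ appearing in the ``$j\neq i$'' factor $P$ are arranged to lie on or above the threshold $M$, so that no $A_{j,b}^{-1}$ with $j\neq i$ can contribute to a truncated monomial: lowering in any direction $j\neq i$ immediately produces a factor $A_{j,q^r}^{-1}$ with $r\geq M$, which is killed by the truncation. (This is the role of all the elaborate integer-part bookkeeping in the definition of $\wt{\Psib}_i^{(N,M)}$.)

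Granting that, the truncated $q$-character only sees lowering in the single direction $i$, so it is computed by the $q$-character of the KR module $W^{(i)}_{-N+1}$ truncated in the same way. The $q$-character of $W^{(i)}_{k,b} = L(Y_{i,b}Y_{i,bq_i^2}\cdots Y_{i,bq_i^{2(k-1)}})$ is classical: it equals $\sum_{0\leq r\leq k}M_r$ where $M_0$ is the highest monomial and $M_r = M_{r-1}A_{i,\cdot}^{-1}$ for the appropriate spectral parameter, i.e. $\chi_q(W^{(i)}_{k,b}) = (\text{hw})\sum_{0\leq r\leq k}(A_{i,bq_i^{2k-1}}A_{i,bq_i^{2k-3}}\cdots)^{-1}$ — a ``string'' of length $k+1$. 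Translating through the twist by $[\,-\omega_i\,]$ that converts $Y$'s into $\wt{Y}$'s and $A_{i,a}$ into $\wt A_{i,a}$, and matching spectral parameters with the normalization in the Lemma (highest $\ell$-weight $\wt{\Psib}_i^{(N,M)}$, string running with $A_{i,1},A_{i,q_i^{-2}},\dots$), this gives exactly $[\wt{\Psib}_i^{(N,M)}]\sum_{0\leq r\leq -N+1}(A_{i,1}A_{i,q_i^{-2}}\cdots A_{i,q_i^{-2(r-1)}})^{-1}$.

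For the second identity I would take the limit $N\to-\infty$. By definition $\wt{\Psib}_i^{(M)} = \wt{\Psib}_i^{(N,M)}\Psib_{i,q_i^{2N}}^{-1}$, and the factor $\Psib_{i,q_i^{2N}}^{-1}$ ``opens up'' the $i$-string from a finite one of length $-N+1$ to the infinite series $\chi_{i,1} = \sum_{r\geq 0}(A_{i,1}A_{i,q_i^{-2}}\cdots A_{i,q_i^{-2(r-1)}})^{-1}$, exactly as in the passage from $\chi_q(L_{i,a}^+)$ (a KR-string limit) described in Theorem \ref{formuachar}(ii) and Example \ref{ex-calcul}. Concretely, $\chi_q^{<M}$ is continuous for the formal-power-series topology in the $A_{j,q^r}^{-1}$, and $\chi_q^{<M}(L(\wt{\Psib}_i^{(N,M)}))\to \chi_q^{<M}(L(\wt{\Psib}_i^{(M)}))$ as $N\to-\infty$ because each truncated monomial stabilizes; the right-hand side $[\wt{\Psib}_i^{(N,M)}]\sum_{0\leq r\leq -N+1}(\cdots)$ visibly converges to $[\wt{\Psib}_i^{(M)}]\chi_{i,1}$.

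\textbf{Main obstacle.} The delicate point is the claim that truncating at $<M$ forbids any contribution from lowering in directions $j\neq i$ — i.e. that the threshold $M$ has been chosen large enough, in every one of the three cases $C_{i,j}=-1,-2,-3$, that the first $A_{j,\cdot}^{-1}$ one could possibly apply already sits at index $\geq M$. Verifying this requires carefully checking the integer-part exponents in the definition of $\wt{\Psib}_i^{(N,M)}$ against the spectral-parameter shifts in $A_{j,a}$ (the shifts by $q_i^{\pm1}$, $q_j^{\pm1}$, $q^{\pm1},\dots$ encoded in the definition of $A_{i,a}$), case by case on the value of $C_{i,j}$ and on the residues of $M$ modulo $2r_j$. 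This is exactly the bookkeeping the $[\,\cdot\,]$'s in $\wt{\Psib}_i^{(N,M)}$ were engineered to make work, so it is routine but must be done honestly; once it is in place, the rest is the standard KR-string computation plus a limit.
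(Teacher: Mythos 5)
Your overall architecture matches the paper's: you factor $\wt{\Psib}_i^{(N,M)}$ as an $i$-direction KR-type highest monomial times a product of $\wt{Y}_{j,b}$, $j\neq i$, placed at spectral parameters at or above the threshold $M$; you realize $L(\wt{\Psib}_i^{(N,M)})$ as a simple subquotient of the corresponding tensor product; you observe that after truncation at $<M$ the second factor contributes only its highest $\ell$-weight (this is exactly the paper's use of \cite[Lemma 4.4]{HCrelle}); and you obtain the second identity by the limit $N\to-\infty$, as the paper does via \cite[Theorem 6.1]{HJ} and \cite[Theorem 7.6]{HL}.

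However, there is a genuine gap in the middle step. You assert that the $q$-character of the KR module $W^{(i)}_{k,b}$ ``equals $\sum_{0\leq r\leq k}M_r$, a string of length $k+1$.'' This is false for every $\g\neq\sw_2$: already for $\g=\sw_3$ one has $\chi_q(L(Y_{1,a}))=Y_{1,a}+Y_{1,a}A_{1,aq}^{-1}+Y_{1,a}A_{1,aq}^{-1}A_{2,aq^2}^{-1}$, and the last monomial is not on the $i=1$ string. Crucially, these extra monomials are \emph{not} removed by the truncation at $<M$: they arise by applying some $A_{j,q^c}^{-1}$ at spectral parameters adjacent to the $A_{i,q_i^{-2s}}$'s already present, so $c$ is near $0$ or negative, well below the threshold $M>0$. (Your ``main obstacle'' paragraph addresses only the other tensor factor, whose variables are deliberately placed at parameters $\geq M$; that part of your argument is fine and is indeed what the integer-part bookkeeping is engineered for.) After your reduction you therefore only have an upper bound of $\chi_q^{<M}(L(\wt{\Psib}_i^{(N,M)}))$ by $[\wt{\Psib}_i^{(N,M)}]$ times the \emph{full} KR $q$-character, which strictly exceeds the claimed string. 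The paper closes this gap with a separate elimination argument that uses simplicity of $L(\wt{\Psib}_i^{(N,M)})$ rather than the truncation: by the description of KR monomials in \cite[Lemma 5.5]{HCrelle}, a non-string monomial would force the presence of a monomial of weight $-r\alpha_i-\alpha_j$ with $j\sim i$, and for minimal such $r$ this monomial is excluded from the $q$-character of the simple module by the smallness criterion of \cite[Theorem 5.1]{H}. Without this step (or an equivalent substitute) the first formula is not established.
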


\begin{rem}{\rm The first formula proves a particular case of \cite[Conjecture 7.15]{HL}.}\qed\end{rem}

\begin{proof} For the first formula, note that
  $L(\wt{\Psib}_i^{(N,M)})$ is a subquotient of
$$L(\wt{\Psib}_i^{(N,0)})\otimes L(\wt{\Psib}_i^{(0,M)}),$$
where we set 
$$\wt{\Psib}_i^{(0,M)} = \wt{\Psib}_i^{(N,M)}(\wt{\Psib}_i^{(N,0)})^{-1}.$$
Here $L(\wt{\Psib}_i^{(N,0)})$ is a KR-module and $
L(\wt{\Psib}_i^{(0,M)})$ is a tensor product of KR-modules which is
also simple (we can argue as in \cite[Proposition 5.3]{HCrelle}). Then
it is proved in \cite[Lemma 4.4]{HCrelle} that $\wt{\Psib}_i^{(0,M)}$
is the only $\ell$-weight in $\chi_q(L(\wt{\Psib}_i^{(0,M)}))$ which
may occur in $\chi_q^{< M}(L(\wt{\Psib}_i^{(0,M)}))$; that is,
$$\chi_q^{< M}(L(\wt{\Psib}_i^{(0,M)})) = [\wt{\Psib}_i^{(0,M)}].$$
Consequently the $\ell$-weights occurring in $\chi_q^{<
  M}(L(\wt{\Psib}_i^{(N,M)}))$ are of the form 
$$\wt{\Psib}_i^{(N,M)}\Psib$$ 
where $(\wt{\Psib}_i^{(0,M)})^{-1}\Psib$ is an $\ell$-weight occurring
in $\chi_q(L(\wt{\Psib}_i^{(N,0)}))$. As $L(\Psib_i^{(N,0)})$ is a KR
modules, it is proved in \cite[Section 4]{HCrelle} that its
$q$-character can be computed by using the algorithm introduced in
\cite[Section 5.5]{Fre2}.  We also have precise information on the
monomials occurring in ts $q$-character in \cite[Lemma
5.5]{HCrelle}. In particular, we have the following: suppose that
$\Psib$ is not of the form
$$(A_{i,1}A_{i,q_i^{-2}}\cdots A_{i,q_i^{-2(r-1)}})^{-1}\text{ for
  some }0\leq r\leq -N+1,$$ that is it is not in the set denoted by
$\mathcal{B}'$ in \cite[Lemma 5.5]{HCrelle}. Then there is an
$\ell$-weight $\Psib'$ occurring in $\chi_q^{<
  M}(L(\wt{\Psib}_i^{(N,M)}))$ whose weight is of the form
$-r\alpha_i-\alpha_j$ for some $0\leq r\leq -N+1$ and some $j\sim i$
(in \cite[Lemma 5.5]{HCrelle} this is stated with $j\neq i$, but the
Frenkel-Mukhin algorithm mentioned above gives immediately that
necessarily $j\sim i$).  Let $r$ minimal with this property. There is
$\alpha\in\ZZ$ such that
$$\wt{\Psib}_i^{(N,M)}  (A_{i,1}A_{i,q_i^{-2}}\cdots
A_{i,q_i^{-2(r-1)}})^{-1}A_{j,q^\alpha}^{-1}$$
occurs as an $\ell$-weight in $\chi_q(L(\wt{\Psib}_i^{(N,M)}))$. But
such an $\ell$-weight
satisfies exactly the hypothesis of \cite[Theorem 5.1]{H} which gives
sufficient conditions
so that a monomial do not occur in the $q$-character of a simple
module (here the $i$ in
\cite[Theorem 5.1]{H} is $j$, $m$ is $\wt{\Psib}_i^{(N,M)}  (A_{i,1}A_{i,q_i^{-2}}\cdots A_{i,q_i^{-2(r-1)}})^{-1}A_{j,q^\alpha}^{-1}$ and $M$ is $m A_{j,q^\alpha}$ up to a constant $\ell$-weight multiple). Hence we get a contradiction.

For the second formula, it follows from \cite[Theorem 6.1]{HJ} generalized in \cite[Theorem 7.6]{HL} 
that we can take the limit $N\rightarrow -\infty$, that is 
$[\wt{\Psib}_i^{(N,M)}]^{-1}\chi_q^{< M}(L(\wt{\Psib}_i^{(N,M)}))$ converges to 
$[\wt{\Psib}_i^{(M)}]^{-1}\chi_q^{< M}(L(\wt{\Psib}_i^{(M)}))$ as a formal power series
in the $A_{j,b}^{-1}$.
\end{proof}

Consider the partial ordering $\preceq$ is defined on $\mathcal{E}_\ell$ so that $\chi\preceq \chi'$ if the 
coefficients of $\chi$ are lower than those of $\chi'$.

\begin{lem}We have $\chi_q(X_{i,1} ) \preceq [\wt{\Psib}_{i,1}]  \chi_{i,1}  \prod_{j\neq i}\chi_j^{-C_{i,j}}$.
\end{lem}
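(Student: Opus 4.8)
The plan is to prove the inequality $\chi_q(X_{i,1}) \preceq [\wt{\Psib}_{i,1}] \chi_{i,1} \prod_{j\neq i}\chi_j^{-C_{i,j}}$ by realizing $X_{i,1}$ as a limit of the finite-dimensional modules $L(\wt{\Psib}_i^{(M)})$ studied in the preceding Lemma. First I would observe that $\wt{\Psib}_i^{(M)} \to \wt{\Psib}_{i,1}$ (in the appropriate sense of formal power series in the $A_{j,b}^{-1}$, or after normalization) as $M\to\infty$: indeed, inspecting the definition of $\wt{\Psib}_i^{(M)}$, the ``correction factors'' of the form $\Psib_{j,q^{\cdots}}^{-1}$ indexed by $j\sim i$ get pushed off to the boundary as $M$ grows, so what remains stabilizes to $\wt{\Psib}_{i,1}$. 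The key technical input is a monotonicity/stabilization statement: as $M$ increases, the truncated $q$-characters $[\wt{\Psib}_i^{(M)}]^{-1}\chi_q^{<M}(L(\wt{\Psib}_i^{(M)}))$ form a directed system converging coefficientwise to $[\wt{\Psib}_{i,1}]^{-1}\chi_q(X_{i,1})$, using the limit construction of \cite[Theorem 6.1]{HJ} and its generalization in \cite[Theorem 7.6]{HL} together with the characterization of $X_{i,a} = L(\wt{\Psib}_{i,a})$ as a highest $\ell$-weight limit of the $L(\wt{\Psib}_i^{(M)})$.

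Concretely, I would proceed as follows. Step one: by the automorphism $\tau_a$ reduce to $a=1$, as already noted. Step two: show that $L(\wt{\Psib}_{i,1})$ is a subquotient of the (in general infinite) limit of $L(\wt{\Psib}_i^{(M)})$, so that every $\ell$-weight $\Psib$ occurring in $\chi_q(X_{i,1})$, after multiplying by $[\wt{\Psib}_{i,1}]^{-1}$, lies in $\ZZ[[A_{j,b}^{-1}]]$ and already occurs (with at least the same multiplicity bounded above) in the stable limit of $[\wt{\Psib}_i^{(M)}]^{-1}\chi_q^{<M}(L(\wt{\Psib}_i^{(M)}))$ — this is where the truncation index $<M$ is harmless in the limit because any fixed $\ell$-weight of $X_{i,1}$ has bounded weight and hence is captured by $\chi_q^{<M}$ for $M$ large enough. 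Step three: invoke the second formula of the preceding Lemma, namely $\chi_q^{<M}(L(\wt{\Psib}_i^{(M)})) = [\wt{\Psib}_i^{(M)}]\chi_{i,1}$, combined with the fact — to be extracted from Theorem \ref{formuachar} and the limit construction — that the ``displaced'' factors $\Psib_{j,q^{\cdots}}^{-1}$ that differentiate $\wt{\Psib}_i^{(M)}$ from $\wt{\Psib}_{i,1}$ contribute, in the limit, exactly the characters $\prod_{j\neq i}\chi_j^{-C_{i,j}}$ (each $\chi_j = [\Psib_{j,b}^{-1}]\chi_q(L_{j,b}^+)$ being the contribution of one ``missing'' prefundamental factor, with multiplicity $-C_{i,j}$ coming from the number of such $j$-factors in the definition of $\wt{\Psib}_{i,a}$). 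Assembling these gives $[\wt{\Psib}_{i,1}]^{-1}\chi_q(X_{i,1}) \preceq \chi_{i,1}\prod_{j\neq i}\chi_j^{-C_{i,j}}$, which is the claim.

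The main obstacle I anticipate is the bookkeeping in Step three: controlling precisely which $\ell$-weights of the displaced factors survive in the limit and establishing that their combined contribution is bounded above by $\prod_{j\neq i}\chi_j^{-C_{i,j}}$ rather than something larger. The subtlety is that in $L(\wt{\Psib}_i^{(M)})$ the factors indexed by $j\sim i$ are not isolated but interact (through the Frenkel–Mukhin algorithm and the $A$-monomials) with the $A_{i,q^r}^{-1}$ tail that produces $\chi_{i,1}$; one must argue that after truncating to degrees $<M$ and passing to the limit, this interaction contributes nothing extra, i.e. the $q$-character factorizes in the limit as a product of the ``$i$-tail'' part and the ``$j$-boundary'' parts. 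The inequality (rather than equality) in the statement of this Lemma is precisely what makes this step tractable: I only need an upper bound, which can be obtained by comparing with the tensor product $L(\wt{\Psib}_i^{(N,0)})\otimes L(\wt{\Psib}_i^{(0,M)})$ as in the proof of the previous Lemma and using that $\chi_q$ is multiplicative while $L(\wt{\Psib}_i^{(M)})$ is a subquotient, so all multiplicities can only go down. The reverse inequality, needed to upgrade this to the equality \eqref{adem} of Proposition \ref{genfor}, will presumably be handled separately (e.g. by a weight/dimension count or by exhibiting enough $\ell$-weights explicitly), and is not part of the present statement.
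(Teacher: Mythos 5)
There is a genuine gap. Your key technical input --- that the normalized truncated $q$-characters $[\wt{\Psib}_i^{(M)}]^{-1}\chi_q^{< M}(L(\wt{\Psib}_i^{(M)}))$ converge to $[\wt{\Psib}_{i,1}]^{-1}\chi_q(X_{i,1})$ as $M \to \infty$ --- is false. By the second formula of the preceding Lemma, $[\wt{\Psib}_i^{(M)}]^{-1}\chi_q^{< M}(L(\wt{\Psib}_i^{(M)})) = \chi_{i,1}$ for \emph{every} $M$, so this system is constant and its limit is just $\chi_{i,1}$, whereas the bound you are trying to prove (and, by Proposition \ref{genfor}, the actual value of $[\wt{\Psib}_{i,1}]^{-1}\chi_q(X_{i,1})$) is $\chi_{i,1}\prod_{j\neq i}\chi_j^{-C_{i,j}}$. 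No limit of the truncated characters can produce the factor $\prod_{j\neq i}\chi_j^{-C_{i,j}}$; your Step three attributes it to the ``displaced'' factors $\Psib_{j,q^{\cdots}}^{-1}$ but gives no mechanism, and those factors sit inside $\wt{\Psib}_i^{(M)}$ and are normalized away. Relatedly, $\wt{\Psib}_i^{(M)}$ does not tend to $\wt{\Psib}_{i,1}$ in any sense usable here: the two always differ by exactly $-C_{i,j}$ prefundamental factors for each $j\sim i$, whose spectral parameters run off to infinity but never disappear, so $X_{i,1}$ is not a subquotient of any ``limit of the $L(\wt{\Psib}_i^{(M)})$'' in the sense of \cite{HJ}.

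The paper's proof supplies precisely the missing mechanism. Write $\wt{\Psib}_{i,1} = \wt{\Psib}_i^{(M)}\cdot \Psib'$ with $\Psib' = \wt{\Psib}_{i,1}(\wt{\Psib}_i^{(M)})^{-1}$; by construction $\Psib'$ is a product of \emph{positive} prefundamental $\ell$-weights $\Psib_{j,b}$, with $-C_{i,j}$ factors for each $j\sim i$. By Theorem \ref{stensor}, $L(\Psib')$ is a simple tensor product of positive prefundamental representations, so Theorem \ref{formuachar}(i) gives $\chi_q(L(\Psib')) = [\Psib']\prod_{j\neq i}\chi_j^{-C_{i,j}}$ --- this is where the product over $j$ comes from. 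Since $X_{i,1}$ is a subquotient of $L(\wt{\Psib}_i^{(M)})\otimes L(\Psib')$ and $\chi_q$ is multiplicative, one gets $[\wt{\Psib}_{i,1}^{-1}]\chi_q(X_{i,1}) \preceq [(\wt{\Psib}_i^{(M)})^{-1}]\chi_q(L(\wt{\Psib}_i^{(M)}))\prod_{j\neq i}\chi_j^{-C_{i,j}}$ for every $M$, and only then does truncation enter: for each fixed $\ell$-weight on the left one chooses $M$ so large that it can only be seen in $\chi_q^{< M}(L(\wt{\Psib}_i^{(M)})) = [\wt{\Psib}_i^{(M)}]\chi_{i,1}$. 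Your instincts that only an upper bound is needed and that a subquotient-of-a-tensor-product comparison is the right tool are correct, but the tensor product to compare with is $L(\wt{\Psib}_i^{(M)})\otimes L(\Psib')$, not $L(\wt{\Psib}_i^{(N,0)})\otimes L(\wt{\Psib}_i^{(0,M)})$ (the latter lives entirely inside the analysis of $L(\wt{\Psib}_i^{(N,M)})$ in the previous Lemma and never produces the $\chi_j$'s).
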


\begin{proof} $X_{i,1}$ is a subquotient of 
$$L(\wt{\Psib}_i^{(M)})\otimes L(\wt{\Psib}_{i,1}(\wt{\Psib}_i^{(M)})^{-1}).$$
By Theorem \ref{stensor}, $L(\wt{\Psib}_{i,1}(\wt{\Psib}_i^{(M)})^{-1})$ is a simple tensor product of positive prefundamental representations and
$$\chi_q(L(\wt{\Psib}_{i,1}(\wt{\Psib}_i^{(M)})^{-1})) = [\wt{\Psib}_{i,1}(\wt{\Psib}_i^{(M)})^{-1}] \prod_{j\neq i}\chi_j^{-C_{i,j}}.$$
This implies
$$[\wt{\Psib}_{i,1}^{-1}]\chi_q(X_{i,1})\preceq [\wt{\Psib}_{i,1}^{-1}]\chi_q(L(\wt{\Psib}_i^{(M)})) \chi_q(L(\wt{\Psib}_{i,1}(\wt{\Psib}_i^{(M)})^{-1}))$$
$$ =   [(\wt{\Psib}_i^{(M)})^{-1}]\chi_q(L(\wt{\Psib}_i^{(M)})) \prod_{j\neq i}\chi_j^{-C_{i,j}}.$$
This is true for any $M> 0$. For each $\ell$-weight $\Psib$ in the left term, there is $M$ such that no $A_{j,q^r}^{-1}$ with $r \geq M$ occurs as a factor in $\Psib$. So this $\ell$-weight $\Psib$ occurs only 
in the product with the truncated $q$-character
$$[(\wt{\Psib}_i^{(M)})^{-1}]\chi_q^{< M}(L(\wt{\Psib}_i^{(M)})) \prod_{j\neq i}\chi_j^{-C_{i,j}} = \chi_{i,1} \prod_{j\neq i}\chi_j^{-C_{i,j}}.$$
\end{proof}

To conclude, we prove

\begin{lem}We have $\chi_q(X_{i,1} ) \succeq [\wt{\Psib}_{i,1}  \chi_{i,1}]  \prod_{j\neq i}\chi_j^{-C_{i,j}}$.
\end{lem}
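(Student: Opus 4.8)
Since the previous lemma gives the reverse inequality, proving this one establishes the equality
$$\chi_q(X_{i,1}) = [\wt{\Psib}_{i,1}]\,\chi_{i,1}\,\prod_{j\neq i}\chi_j^{-C_{i,j}},$$
which is Proposition~\ref{genfor} for $a=1$, hence --- via the automorphism $\tau_a$ --- in general. Because the $\chi_j$ are constant-$\ell$-weight elements of $\mathcal{E}$, the content of the present lemma is that for every $r\geq 0$ and every $\omega$ in the support of $\prod_{j\neq i}\chi_j^{-C_{i,j}}$ the $\ell$-weight $\wt{\Psib}_{i,1}\,(A_{i,1}A_{i,q_i^{-2}}\cdots A_{i,q_i^{-2(r-1)}})^{-1}[\omega]$ occurs in $\chi_q(X_{i,1})$ with multiplicity at least that of $[\omega]$ in $\prod_{j\neq i}\chi_j^{-C_{i,j}}$.

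The plan is to promote the subquotient inclusion used in the previous lemma to an isomorphism once the spectral parameters are separated. Recall that $L(\wt{\Psib}_{i,1}(\wt{\Psib}_i^{(M)})^{-1})$ is a simple tensor product of $-C_{i,j}$ positive prefundamental representations for each $j\sim i$, with parameters that move away from those occurring in $\wt{\Psib}_i^{(M)}$ as $M\to\infty$ (this is precisely what the floor-function prescription defining $\wt{\Psib}_i^{(M)}$ arranges), and that $X_{i,1}$ is a quotient of the highest-$\ell$-weight submodule of $L(\wt{\Psib}_i^{(M)})\otimes L(\wt{\Psib}_{i,1}(\wt{\Psib}_i^{(M)})^{-1})$. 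I would show that for $M$ large this tensor product is in fact irreducible, and therefore isomorphic to $X_{i,1}$ since its highest $\ell$-weight is $\wt{\Psib}_{i,1}$. This should follow from the $R$-matrix and cyclicity results for category $\mathcal{O}$ of \cite{HJ,FH}: a tensor product $V\otimes L^+_{j,b}$ with $V\in\mathcal{O}$ is irreducible for $b$ outside a finite set determined by the highest $\ell$-weight of $V$, and one iterates over the finitely many prefundamental factors, choosing $M$ so large that all their parameters avoid the relevant bad sets. Verifying this irreducibility uniformly in $M$ --- the bad set must be controlled even though $L(\wt{\Psib}_i^{(M)})$ varies --- is the main obstacle. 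If a clean irreducibility statement turns out to be unavailable, one instead refines the argument of the previous lemma from a comparison of supports to a comparison of multiplicities, ruling out the contributions of the other composition factors to the relevant $\ell$-weights by induction on the weight, using Theorem~\ref{formuachar}(ii) together with the monomial estimates of \cite{H,HCrelle}.

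Granting the isomorphism, one applies $\chi_q$, uses $\chi_q(L^+_{j,b}) = [\Psib_{j,b}]\chi_j$ from Theorem~\ref{formuachar}(i) for the prefundamental factors, and the identity $\chi_q^{<M}(L(\wt{\Psib}_i^{(M)})) = [\wt{\Psib}_i^{(M)}]\chi_{i,1}$ established above. As in the proof of the previous lemma, for the coefficient of any fixed $\ell$-weight one may replace $\chi_q(L(\wt{\Psib}_i^{(M)}))$ by its truncation $\chi_q^{<M}(L(\wt{\Psib}_i^{(M)}))$ once $M$ is chosen large, the discarded part involving only the variables $A^{-1}_{j,q^r}$ with $r\geq M$; letting $M\to\infty$ then yields $\chi_q(X_{i,1}) = [\wt{\Psib}_{i,1}]\chi_{i,1}\prod_{j\neq i}\chi_j^{-C_{i,j}}$, which contains in particular the asserted lower bound. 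This completes the proof of Proposition~\ref{genfor}, and with it of Theorem~\ref{relation}.
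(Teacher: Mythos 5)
Your strategy is genuinely different from the paper's, and, as you yourself flag, it rests on an unproved irreducibility claim that is in fact the entire mathematical content you would need to supply. The results available in \cite{HJ,FH} (in particular Theorem \ref{stensor}) give simplicity only for tensor products of prefundamental representations among themselves; there is no statement of the form ``$V\otimes L^+_{j,b}$ is irreducible for $b$ outside a finite set depending on $V$'' for an arbitrary simple $V$ in category $\mathcal{O}$. Even granting such a statement, here the factor $L(\wt{\Psib}_i^{(M)})$ changes together with the spectral parameters $b\sim q^M$ of the prefundamental factors, so the ``bad set'' moves with $M$ and the avoidance argument is not automatic --- and note that irreducibility would force $\chi(L(\wt{\Psib}_i^{(M)}))=\chi_i$, which is itself not known (only the truncated $q$-character of $L(\wt{\Psib}_i^{(M)})$ is computed in the preceding lemma). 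Your fallback (upgrading the support comparison to a multiplicity comparison by induction on the weight) is a direction rather than an argument. So as written the proposal does not establish the lower bound.

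The paper's proof avoids any irreducibility claim by two tensor-product manipulations. First one tensors $X_{i,1}$ with $L(\wt{\Psib}_{i,1}^{-1}\Psib_{i,1}^{-1})$, which by Theorem \ref{stensor} is a simple tensor product of negative prefundamental representations $L^-_{j,b}$ with $j\sim i$; the constituent $L(\Psib_{i,1}^{-1})=L^-_{i,1}$ of this tensor product carries the $\ell$-weights $\Psib_{i,1}^{-1}\Psib'$ with $\Psib'$ in $\chi_{i,1}$, and since the weight deficit of such a $\Psib'$ is a multiple of $\alpha_i$ while, by Theorem \ref{formuachar}(ii), any non-highest $\ell$-weight of an $L^-_{j,b}$ with $j\sim i$ contributes a deficit involving $\alpha_j$, the whole factor $\Psib'$ must come from $\chi_q(X_{i,1})$; this yields $\chi_q(X_{i,1})\succeq[\wt{\Psib}_{i,1}]\chi_{i,1}$. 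Second, one tensors with $L^+_{i,1}$: for each $\Psib'$ in $\chi_{i,1}$ the $\ell$-weight $\Psib_{i,1}\wt{\Psib}_{i,1}\Psib'$ is the highest $\ell$-weight of a simple tensor product of prefundamental representations, whose $q$-character equals $[\Psib_{i,1}\wt{\Psib}_{i,1}\Psib']\,\chi_i\prod_{j\neq i}\chi_j^{-C_{i,j}}$ by Theorems \ref{stensor} and \ref{formuachar}(i); these modules occur as constituents of $X_{i,1}\otimes L^+_{i,1}$, and comparing $\chi_q(X_{i,1})\chi_q(L^+_{i,1})=[\Psib_{i,1}]\chi_i\,\chi_q(X_{i,1})$ with the sum of their $q$-characters and cancelling $[\Psib_{i,1}]\chi_i$ produces the missing factor $\prod_{j\neq i}\chi_j^{-C_{i,j}}$. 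If you want to pursue your route, this second step is the mechanism you would need to replace your irreducibility claim with.
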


\begin{proof}
Consider the representation 
$$X_{i,1}\otimes L(\wt{\Psib}_{i,1}^{-1}\Psib_{i,1}^{-1}).$$ 
It admits $L(\Psib_{i,1}^{-1})$ as a simple constituent. By Theorem \ref{stensor}, $L(\wt{\Psib}_{i,1}^{-1}\Psib_{i,1}^{-1})$ is a 
tensor product of negative prefundamental representations. 
Let $\Psib'$ be an $\ell$-weight occurring in 
$$\chi_{i,1}\preceq \Psib_{i,1}\chi_q(L(\Psib_{i,1}^{-1})).$$ 
Hence by (ii) in Theorem \ref{formuachar}, $\Psib'$ is a product $M\wt{\Psib}_{i,1}^{-1}$ where $M$ is an $\ell$-weight of $\chi_q(X_{i,1})$. 
We get 
$$\chi_q(X_{i,1})\succeq [\wt{\Psib}_{i,1}]\chi_{i,1}.$$
Note that for $\Psib'$ an $\ell$-weight in $\chi_{i,1}$, the product $\Psib_{i,1}\wt{\Psib}_{i,1}\Psib'$ is a monomial in the $\Psib_{j,a}$. So  by Theorem \ref{stensor} and (i) in Theorem \ref{formuachar}, $L(\Psib_{i,1}\wt{\Psib}_{i,1}\Psib')$ is a 
 simple tensor products of positive prefundamental representations and
$$\chi_q(L(\Psib_{i,1}\wt{\Psib}_{i,1}\Psib')) = [\Psib_{i,1}\wt{\Psib}_{i,1}\Psib']\chi_i \prod_{j\neq i}\chi_j^{-C_{i,j}}.$$
 Hence these simple modules are simple constituents of 
$$[X_{i,1}\otimes L_{i,1}^+],$$ that is
$$[\Psib_{i,1}]\chi_i \chi_q(X_{i,1}) = \chi_q(X_{i,1})\chi_q(L_{i,1}^+)\succeq \chi_{i,1} [\Psib_{i,1}\wt{\Psib}_{i,1}] \chi_i \prod_{j\neq i}\chi_j^{-C_{i,j}},$$
which implies the result.
\end{proof}

\subsection{Completion of the proof of Theorem \ref{relation}}

We can now complete the proof of Theorem \ref{relation}.

Note that $C_{i,j} < -1$ implies $r_i = 1$.  It suffices to prove that
$$
 \left(\prod_{j|C_{i,j} = - 1}\Psib_{j,a}\right)\left(\prod_{j|C_{i,j} = -2}\Psib_{j,aq^{-1}}\Psib_{j,aq}\right)\left(\prod_{j|C_{i,j} = -3}\Psib_{j,aq^{-2}}\Psib_{j,a}\Psib_{j,aq^2}\right) \chi_{i,aq_i^{-1}}$$ 
$$= [-\alpha_i] \Psib_{i,aq_i}\Psib_{i,aq_i^{-3}}^{-1}
 \left(\prod_{j|C_{i,j} = - 1}\Psib_{j,aq_i^{-2}}\right)\left(\prod_{j|C_{i,j} = -2}\Psib_{j,aq^{-3}}\Psib_{j,aq^{-1}}\right)$$
$$\times\left(\prod_{j|C_{i,j} = -3}\Psib_{j,aq^{-4}}\Psib_{j,aq^{-2}}\Psib_{j,a}\right)\chi_{i,aq_i^{-3}}$$ 
$$+  \left(\prod_{j|C_{i,j} = - 1}\Psib_{j,a}\right)\left(\prod_{j|C_{i,j} = -2}\Psib_{j,aq^{-1}}\Psib_{j,aq}\right)\left(\prod_{j|C_{i,j} = -3}\Psib_{j,aq^{-2}}\Psib_{j,a}\Psib_{j,aq^2}\right),$$
that is
$$\chi_{i,aq_i^{-1}}
= 1  +
 [-\alpha_i] \Psib_{i,aq_i}\Psib_{i,aq_i^{-3}}^{-1}\chi_{i,aq_i^{-3}}$$
\begin{equation}\label{term}\times \left(\prod_{j|C_{i,j} = - 1}\Psib_{j,a}^{-1}\Psib_{j,aq_i^{-2}}\right)\left(\prod_{j|C_{i,j} = -2}\Psib_{j,aq}^{-1}
\Psib_{j,aq^{-3}}\right)\left(\prod_{j|C_{i,j} = -3}
\Psib_{j,aq^2}^{-1}
\Psib_{j,aq^{-4}}\right)
.\end{equation}
Note that $A_{i,aq_i^{-1}} [-\alpha_i] \Psib_{i,aq_i}^{-1}\Psib_{i,aq_i^{-3}}$ is equal to
$$
\left(\prod_{j|C_{j,i} = - 1}\Psib_{j,aq^{-r_i-r_j}}\Psib_{j,aq^{r_j-r_i}}^{-1}\right)
\left(\prod_{j|C_{j,i} = -2}\Psib_{j,aq^{-r_i-2}}\Psib_{j,aq^{2-r_i}}^{-1}\right)$$
$$\times\left(\prod_{j|C_{j,i} = -3}\Psib_{j,aq^{-r_i-3}}\Psib_{j,aq^{3-r_i}}^{-1}\right)
.$$ 
This is exactly the last factor in Equation (\ref{term}):

if $r_j = 1$, then $C_{i,j} = -1$ and both factors are equal to $\Psib_{j,a}^{-1}\Psib_{j,aq_i^{-2}}$.

if $r_j = 2$ and $r_i= 1$, then $C_{i,j} = -2$, $C_{j,i} = -1$ and both factors are equal to $\Psib_{j,aq}^{-1}\Psib_{j,aq^{-3}}$.

if $r_j = r_i = 2$, then $C_{i,j} = C_{j,i} = -1$ and both factors are equal to $\Psib_{j,a}^{-1}\Psib_{j,aq^{-4}}$.

if $r_j = 3$ and $r_i = 1$, then $C_{i,j} = -3$, $C_{j,i} = -1$ and both factors are equal to $\Psib_{j,aq^2}^{-1}\Psib_{j,aq^{-4}}$.

\noindent We get the desired result because
$$\chi_{i,aq_i^{-1}} = 1 +  A_{i,aq_i^{-1}}^{-1}\chi_{i,aq_i^{-3}}.$$
\qed

\section{Bethe Ansatz}    \label{bethe}

We now derive the Bethe Ansatz equations from the $Q\wt{Q}$-system
\eqref{relation1}, following \cite{MRV1,MRV2}. We focus of the case
of untwisted affine algebras, but one can obtain the Bethe Ansatz
equations for the twisted affine algebras from the $Q\wt{Q}$-system
\eqref{QQ tw} in a similar way.

Suppose that we have an action of the commutative algebra $K_0({\mc
  O})$ on a vector space $V$, and let $v$ be one of its joint
eigenvectors. Then we obtain an algebra homomorphism from $K_0({\mc
  O})$ to $\C$. Let us denote the values of the elements $Q_{i,u}$ and
$\wt{Q}_{i,u}$ of $K_0({\mc O})$ under this homomorphism by ${\mb
  Q}_{i}(u)$ and $\wt{\mb Q}_{i}(u)$, respectively. Depending on the
space $V$, these functions will have different analytic properties.

In addition, under any homomorphism from $K_0({\mc O})$ to $\C$, we
have
$$
\left [\pm\frac{\alpha_i}{2}\right] \mapsto v_i^{\pm 1}
$$
for some $v_i \in \C^\times$, for all $i \in I$. (Note that what we
denoted by $v_i$ in \cite{FH} corresponds to $v_i^2$ here; however,
that $v_i$ was a formal variable in \cite{FH}, whereas here it is a
non-zero complex number.)

The relations in \eqref{relation} then give rise to algebraic
relations between these functions:
\begin{equation}    \label{Qsyst}
v_i {\mb Q}_{i}(uq_i^{-1})\wt{{\mb
    Q}}_{i}(uq_i) - v_i^{-1} {\mb
  Q}_{i}(uq_i)\wt{{\mb Q}}_{i}(uq_i^{-1})
\end{equation}
$$=
\left(\prod_{j|C_{i,j} = - 1}{\mb Q}_{j}(u)\right)\left(\prod_{j|C_{i,j} =
    -2}{\mb Q}_{j}(uq^{-1}){\mb Q}_{j}(uq)\right)\left(\prod_{j|C_{i,j} =
    -3}{\mb Q}_{j}(uq^{-2}){\mb Q}_{j}(u){\mb Q}_{j}(uq^2)\right) .$$

Now suppose that $w$ is a zero of ${\mb Q}_i(u)$ that is not a zero of
$\wt{\mb Q}_i(u)$ and that the terms in formula \eqref{Qsyst} have no
poles when $u=wq_i^{\pm 1}$ (we will refer to this as a genericity
condition). Substituting $u=wq_i^{\pm 1}$ into \eqref{Qsyst} and
taking the ratio of the resulting equations, we obtain:
\begin{equation}    \label{BAE}
v_i^{-2} \prod_{j \in I}
\frac{\mb{Q}_j(wq^{B_{ij}})}{{\mb Q}_j(wq^{-B_{ij}})} = -1,
\end{equation}
where $(B_{ij})$ is the symmetrized Cartan matrix, $B_{ij} =
(\al_i,\al_j)$. These are the Bethe Ansatz equations.

Thus, under the genericity condition, the zeros of ${\mb Q}_i(u)$ must
satisfy the Bethe Ansatz equations \eqref{BAE}.

In Section 5.6 of \cite{FH} (see also Section 6 of \cite{Fre}) we
obtained these equations in the case that $V$ is the tensor product of
irreducible finite-dimensional representations of $U_q(\ghat)$ and the
action of $K_0({\mc O})$ on $V$ is obtained using the standard
transfer-matrix construction.

If we switch to the dual category ${\mc O}^*$, so that $Q_{i,u}$
becomes $[R_{i,u}^+]$, as explained in Remark \ref{rem 3 parts},(iii),
then the corresponding Bethe Ansatz equation \eqref{BAE} is equivalent
to formula (5.8) of \cite{FH} (note that an overall minus sign is
missing in that formula). This case is special in that for a given
$V$, any eigenvalue of the transfer-matrix of $R_{i,a}^+$ on $V$ has
the form ${\mb Q}_i(u) = f_i(u) Q_i(u)$, where $f_i(u)$ is a universal
factor that depends only on $V$ and $i$, and $Q_i(u)$ is a polynomial
(this is an analogue of the Baxter polynomial). Thus, the analytic
behavior of ${\mb Q}_i(u)$ has a very special form in this case.

Though we did not prove it in \cite{FH}, we do expect that the
eigenvalues of the transfer-matrix of $L^+_{i,u}$ have the same
general form as those of the transfer-matrix of $R_{i,u}^+$. If this
is indeed the case, then equations (5.8) of \cite{FH} may also be
viewed as the equations on the zeros of the generalized Baxter
polynomials occurring in the eigenvalues of the transfer-matrix of
$L^+_{i,u}$ on $V$.

However, the derivation of the Bethe Ansatz equations presented in
\cite{FH} (following the analytic Bethe Ansatz method
\cite{R1,R2,R3,BR,KS}) is much less direct than the derivation
presented in this section. Indeed, the argument of \cite{FH} started
with the formula expressing the eigenvalues of the transfer-matrix of
a finite-dimensional representation $W$ of $U_q(\ghat)$ in terms of the
eigenvalues of the transfer-matrices of $R_{i,u}^+$ (or $L^+_{i,u}$),
see Theorem 5.11 of \cite{FH} (these are the analogues of Baxter's
$TQ$-relation). If we make a specific assumption about how poles get
canceled in this formula (namely, that the cancellation happens
between the terms in the formula corresponding to the monomials $M$
and $M A_{i,aq_i}^{-1}$ from the $q$-character), then we obtain the
above Bethe Ansatz equations \eqref{BAE}, see Section 5.8 of \cite{FH}
for details. In contrast, in our present argument we immediately get
the Bethe Ansatz equations under a mild genericity condition.

\medskip

The non-local quantum KdV Hamiltonians give us (conjecturally, see
Section \ref{nlham} below) another way to construct an action of
$K_0({\mc O})$, as explained in Section \ref{qkdv}. In this case, $V$
is a graded component in a representation of a ${\mc W}$-algebra
associated to $\g$; for example, a Fock representation. Under the same
genericity condition, the zeros of the corresponding eigenvalues
${\mb Q}_i(u)$ satisfy Bethe Ansatz equations \eqref{BAE} (with
specific values of $v_i$). Note that for $\wh{\sw}_2$ this was shown
in \cite{BLZ} using the quantum Wronskian relation, to which the
$Q\wt{Q}$-system reduces in the case of $\wh\sw_2$ (see Section
\ref{firstex}). In this case, the function ${\mb Q}_1(u)$ is expected
to be an entire function of $u$, see \cite{BLZ4,BLZ}.

\medskip

We close this section with two remarks. First, the Bethe Ansatz
equations \eqref{BAE} can be derived, in a similar fashion, from the
$QQ^*$-system of \cite{HL} (see Section \ref{QQstar}), under an
assumption that is similar to the above genericity condition.

Second, for the quantum ${\mathfrak g}{\mathfrak l}_1$ toroidal
algebra, a system of relations in $K_0({\mc O})$ was established in
\cite{FJMM}. It could be viewed as an analogue of the $QQ^*$-system of
\cite{HL} (for the analogue of the $Q\wt{Q}$-system, see Section
\ref{toroidal} above). In \cite{FJMM}, Bethe Ansatz equations were
derived from that system in a similar fashion. However, the authors of
\cite{FJMM} went a step further: they proved that the analogue of the
above genericity assumption is in fact not necessary. This gives us
hope that the genericity assumption can be dropped in the affine case
as well, for both the $QQ^*$-system and the $Q\wt{Q}$-system.

Finally, we can derive the Bethe Ansatz equations of \cite{FJMM} from
the $Q\wt{Q}$-system for the quantum ${\mathfrak g}{\mathfrak l}_1$
toroidal algebra from Section \ref{toroidal}, under the genericity
assumption. Namely, if $w$ is a zero of ${\mb Q}(z)$ which is not a zero of
$\wt{{\mb Q}}(z)$, then we get the following Bethe Ansatz equation:
$$
{\mb Q}(wq_1){\mb Q}(wq_2){\mb Q}(wq_3) + [\alpha]{\mb
  Q}(wq_1^{-1}){\mb Q}(wq_2^{-1}){\mb Q}(wq_3^{-1}) = 0.
$$
It coincides with the Bethe Ansatz equation obtained in a different
way in \cite{FJMM}.

\section{Classical KdV system}    \label{ckdv}

In the rest of this paper, we discuss the affine opers that should
encode the eigenvalues of the quantum $\ghat$-KdV Hamiltonians
according to the conjecture of \cite{FF:sol}. We start by recalling
the definition of the classical KdV systems and opers.

\subsection{Drinfeld--Sokolov reduction and opers}    \label{untw}

The phase space of the classical $\ghat$-KdV system is obtained from a
certain space of first order differential operators by Hamiltonian
reduction, which is called the Drinfeld--Sokolov reduction
\cite{DS}. We will first discuss the case of an untwisted affine
algebra $\ghat$, by which we mean the universal central extension of
the formal loop algebra $\g\ppart$:
$$
0 \longrightarrow \C {\mathbf 1} \longrightarrow \ghat \longrightarrow
\g\ppart \longrightarrow 0
$$
(we are slightly abusing notation here, because in our discussion of
the quantum affine algebras $\ghat$ stands for the Laurent polynomial
version). The commutation relations read: $[{\mathbf 1},A(t)]=0$ and
$$
[A(t),B(t)] = [A(t),B(t)] - \on{Res}_{t=0} \kappa_0(A(t),dB(t)),
$$
where $\kappa_0$ is the invariant inner product on $\g$ normalized in
the standard way, so that the square length of the maximal root is
equal to 2.

Consider the space of differential operators
\begin{equation}    \label{eta L}
\pa_t + A(t), \qquad A(t) \in \g\ppart,
\end{equation}
The inner product
$$
\langle A(t),B(t) \rangle = \on{Res}_{t=0} \kappa_0(A(t),B(t)) dt
$$
enables us to identify $\g\ppart$ with its dual space. It is known
(see, e.g., \cite{FB}, Ch. 16.4) that under this identification, the
space of differential operators \eqref{eta L} may be identified with a
hyperplane in the dual space to $\ghat$ that consists of all linear
functionals on $\ghat$ taking value 1 on the central element ${\mathbf
  1}$. The standard Kirillov--Kostant Poisson structure on the dual
space to $\ghat$ restricts to a Poisson structure on the
hyperplane. So do the coadjoint actions of the group $G\ppart$ and its
Lie algebra $\g\ppart$, and when written in terms of the operators
\eqref{eta L}, they become the gauge actions of $G\ppart$ and
$\g\ppart$, respectively.

Fix the Cartan decomposition
$$
\g = \n_+ \oplus \h \oplus \n_-,
$$
where $\n_+$ and $\n_-$ are the upper and lower nilpotent subalgebras
of $\g$, respectively, and $\h$ is the Cartan subalgebra. The above
inner product on $\g\ppart$ identifies the dual space to $\n_+\ppart$
with $\n_-\ppart$. Let $f_i, i=1,\ldots,n$, be generators of $\n_-$
corresponding to negative simple roots of $\g$. Consider the
Hamiltonian reduction of the space of the operators \eqref{eta L} with
respect to the gauge (that is, coadjoint, hence Poisson) action of the
Lie algebra $\n_+\ppart$ and its character (that is, a one-point
coadjoint orbit in $\n_-\ppart$) corresponding to the element
\begin{equation} \label{olp} \ol{p}_{-1} = \sum_{i=1}^n f_i \; \in
  \; \n_- \subset \n_-\ppart = \n_+\ppart^*.
\end{equation}
This is the {\em Drinfeld--Sokolov reduction} \cite{DS}.

The reduced phase space of the Drinfeld--Sokolov reduction is
therefore the quotient of the space $\wt{\mc M}(\ghat)$ of operators
of the form
\begin{equation}    \label{reduced}
\pa_t + \ol{p}_{-1} + {\mb v}(t), \qquad {\mb v}(t) \in \bb_+\ppart,
\end{equation}
where $\bb_+ = \h \oplus \n_+$ is the Borel subalgebra of $\g$,
under the gauge action of the loop group $N_+\ppart$.

According to \cite{DS}, the action of $N_+\ppart$ on $\wt{\mc
  M}(\g)$ is free. The resulting quotient space
$$
{\mc M}(\g) = \wt{\mc M}(\g)/N_+\ppart
$$
is called the space of $\g$-{\em opers} on the punctured disc
$D^\times = \on{Spec} \C\ppart$ (for a general curve, the space of
$\g$-opers has also been defined by Beilinson and Drinfeld
\cite{BD,BD:opers}). The Poisson algebra of local functionals on ${\mc
  M}(\g)$ is known as the {\em classical ${\mc W}$-algebra}. We denote
it by ${\mc W}(\g)$.

For example, for $\g = \sw_2$ we have
$$
\ol{p}_{-1} = f_1 = \begin{pmatrix} 0 & 0 \\
  1 & 0 \end{pmatrix},
$$
and so ${\mc M}(\sw_2)$ is the quotient of the space of operators of
the form
$$
\pa_t + \begin{pmatrix} a(t) & b(t) \\
  1 & -a(t) \end{pmatrix}, \qquad a(t),b(t) \in \C\ppart,
$$
by the upper triangular gauge transformations depending on $t$. It is
easy to see that each gauge equivalence class contains a unique
operator of the form
$$
\pa_t + \begin{pmatrix} 0 & v(t) \\
  1 & 0 \end{pmatrix}, \qquad v(t) \in \C\ppart,
$$
and hence we may identify ${\mc M}(\sw_2)$ with the space of such
operators, or, equivalently, with the space of second order
differential operators
$$
\pa_t^2 - v(t), \qquad v(t) \in \C\ppart.
$$

Likewise, the space ${\mc M}(\sw_r)$ may be identified with the
space of $n$th order differential operators
$$
\pa_t^r - v_1(t) \pa_t^{r-2} + \ldots + (-1)^r v_{r-2}(t) \pa_t - (-1)^r
v_{r-1}(t).
$$
In a similar way, for Lie algebras of types $B$ and $C$ one can
identify $\g$-opers with self-adjoint and anti-self adjoint scalar
differential operators, and for type $D$, pseudo-differential
operators of a special kind \cite{DS}. However, there is no such
uniform identification for a general Lie algebra $\g$. The best we can
do in general is to choose special representatives in the
$N_+\ppart$-gauge equivalence classes on the space of operators of the
form \eqref{reduced} in the following way.

Recall the element $\ol{p}_{-1} \in \n_-$ given by formula
\eqref{olp}. There exists a unique element of $\n_+$ of the form
$$
\ol{p}_1 = \sum_{i=1}^n c_i e_i, \qquad c_i \in \C,
$$
where $e_i, i=1,\ldots,n$, are generators of $\n_+$, such that
$\ol{p}_1, \ol{p}_{-1}$, and $\ol{p}_0 = [\ol{p}_1,\ol{p}_{-1}]$ form
an $\sw_2$ triple. The element $\frac{1}{2} \ol{p}_0 \in \h$ then
defines the principal grading on $\g$ such that $\deg \ol{p}_1 = 1,
\deg \ol{p}_{-1} = -1$. Let
$$
V_{\can} = \bigoplus_{i \in E} V_{\can,i}
$$
be the space of $\on{ad} \ol{p}_1$-invariants in $\n_+$, decomposed
according to the principal grading. Here
$$
E = \{ d_1,\ldots,d_n \}
$$
is the set of exponents of $\g$. Then $\ol{p}_1$ spans
$V_{\on{can},1}$. Choose a linear generator $\ol{p}_j$ of
$V_{\can,d_j}$ (if the multiplicity of $d_j$ is greater than one,
which happens only in the case $\ghat=D^{(1)}_{2n}, d_j=2n$, then we
choose linearly independent vectors in $V_{\on{can},d_j}$). The
following result is due to Drinfeld and Sokolov \cite{DS} (see also
\cite{BD,BD:opers}).

\begin{lem}    \label{free}
  The gauge action of $N_+\ppart$ on the space $\wt{\mc M}(\g)$ is
  free, and each gauge equivalence class contains a unique operator of
  the form $\pa_t + \ol{p}_{-1} + {\mathbf v}(t)$, where ${\mathbf
    v}(t) \in V_{\can}\ppart$, so that we can write
\begin{equation} \label{coeff fun}
{\mathbf v}(t) = \sum_{j=1}^n v_j(t) \cdot \ol{p}_j, \qquad v_j(t) \in
\C\ppart.
\end{equation}
\end{lem}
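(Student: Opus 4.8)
The plan is to run the classical Drinfeld--Sokolov argument \cite{DS}: bring $\mb v(t)$ to canonical form one step at a time with respect to the principal grading, so that freeness and the existence/uniqueness of the canonical representative both fall out of a single recursion.

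First I would set up the linear algebra attached to the principal $\sw_2$-triple $(\ol{p}_1,\ol{p}_0,\ol{p}_{-1})$. Write $\g = \bigoplus_{k\in\Z}\g_k$ for the principal grading, so that $\deg\ol{p}_{\pm 1} = \pm 1$, $\g_0 = \h$, $\n_+ = \bigoplus_{k\ge 1}\g_k$, and operators in $\wt{\mc M}(\g)$ have the form $\pa_t + \ol{p}_{-1} + \mb v(t)$ with $\mb v(t)\in\bigoplus_{k\ge 0}\g_k\ppart$. Decomposing the adjoint representation of $\g$ into irreducible $\sw_2$-modules of highest weights $2d_1,\dots,2d_n$ (the $d_j$ being the exponents), a weight-space count shows that $\on{ad}\ol{p}_{-1}\colon\g_k\to\g_{k-1}$ is injective for every $k\ge 1$ and that
\[
\g_{k-1} \;=\; [\ol{p}_{-1},\g_k]\;\oplus\; V_{\can,k-1}\qquad(k\ge 1),
\]
where, as in the statement, $V_{\can,k-1} = \ker(\on{ad}\ol{p}_1)\cap\g_{k-1}$; in particular $V_{\can,0}=0$. (Componentwise: in a module of highest weight $2d_j$ the lowering operator maps the weight-$2k$ space onto the weight-$(2k-2)$ space unless $k-1=d_j$, in which case the weight-$(2k-2)$ space is the highest-weight line.)

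Next I would analyze the gauge action. For $u = \sum_{k\ge 1}u_k$ with $u_k\in\g_k\ppart$, conjugation of $L = \pa_t+\ol{p}_{-1}+\mb v$ by $e^{u}$ produces $\pa_t+\ol{p}_{-1}+\mb v'$ with
\[
\mb v' \;=\; e^{\on{ad}u}(\ol{p}_{-1}+\mb v)\;-\;\ol{p}_{-1}\;-\;\tfrac{e^{\on{ad}u}-1}{\on{ad}u}(\pa_t u).
\]
The crux is a degree count: since every bracket- or $\pa_t$-term built from $u_j$ has principal degree $\ge j$, the $\g_{k-1}\ppart$-component of $\mb v'$ equals $v_{k-1}-[\ol{p}_{-1},u_k]+F_{k-1}$, where $v_{k-1}$ is the $\g_{k-1}\ppart$-component of $\mb v$ and $F_{k-1}$ depends only on $\mb v$ and on $u_1,\dots,u_{k-1}$; in particular $u_k$ enters this component only through the linear term $-[\ol{p}_{-1},u_k]$, and $u_{k+1},u_{k+2},\dots$ do not enter it at all. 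Using the decomposition above I would then determine $u_k\in\g_k\ppart$ recursively for $k=1,2,3,\dots$: given $u_1,\dots,u_{k-1}$, there is a unique $u_k$ making the $\g_{k-1}\ppart$-component of $\mb v'$ lie in $V_{\can,k-1}\ppart$ (project $v_{k-1}+F_{k-1}$ along $\g_{k-1}=[\ol{p}_{-1},\g_k]\oplus V_{\can,k-1}$ and invert $\on{ad}\ol{p}_{-1}$ on the first summand). Since the $k$-th step only alters components in degrees $\ge k-1$, earlier normalizations are undisturbed, so this produces a gauge transformation carrying $L$ to $\pa_t+\ol{p}_{-1}+\mb v$ with $\mb v\in V_{\can}\ppart$; it is the unique such, by the uniqueness at each step, and choosing a homogeneous basis $\{\ol{p}_j\}$ of $V_{\can}$ gives \eqref{coeff fun}. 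Freeness is the same computation read backwards: if $e^{u}$ fixes some $L\in\wt{\mc M}(\g)$ and $u\ne 0$, let $k_0$ be minimal with $u_{k_0}\ne 0$; then the degree-$(k_0-1)$ component of $e^{u}Le^{-u}-L$ is exactly $-[\ol{p}_{-1},u_{k_0}]$, which is nonzero because $\on{ad}\ol{p}_{-1}$ is injective on $\g_{k_0}$ --- a contradiction.

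The main obstacle is the degree bookkeeping in the expansion of $\mb v'$: one must verify carefully that $u_k$ appears in the $\g_{k-1}$-component only via $-[\ol{p}_{-1},u_k]$, that $u_{\ge k+1}$ drop out of that component, and that the terms $\pa_t u_j$ (of degree $j$) never spoil lower degrees. Once this is pinned down the whole argument is purely linear-algebraic, one degree at a time --- there is no differential equation to integrate, so replacing $\g$ by $\g\ppart$ is harmless --- and existence, uniqueness, and freeness all follow from the single recursion.
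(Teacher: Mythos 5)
Your proposal is correct and is precisely the standard Drinfeld--Sokolov argument: the paper itself gives no proof of this lemma, simply citing \cite{DS}, and your degree-by-degree recursion based on the decomposition $\g_{k-1}=[\ol{p}_{-1},\g_k]\oplus V_{\can,k-1}$ coming from the principal $\sw_2$-triple (together with injectivity of $\on{ad}\ol{p}_{-1}$ on positive degrees, which also gives freeness) is exactly the argument of that reference. The degree bookkeeping you flag as the main obstacle is handled correctly, and your observation that everything is purely linear-algebraic over $\C\ppart$ is the right reason the loop-algebra setting causes no trouble.
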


Thus, each point of the reduced phase space ${\mc M}(\g)$ of the
Drinfeld--Sokolov reduction is canonically represented by an operator
$\pa_t + \ol{p}_{-1} + {\mathbf v}(t)$, where ${\mathbf v}(t)$ is of
the form \eqref{coeff fun}.

\subsection{Spectral parameter}    \label{mon}

Now we insert the ``spectral parameter'' $z$ into our operators. This
means that we go from $\g$ to $\g\zpart$, and from $\g\ppart$ to
$\g\zpart\ppart$. Let
$$
f_0 = e_{\theta} z \; \in \g\zpart, \quad \on{where} \quad e_{\theta}
\in \n_+ \subset \g
$$
is a non-zero element in the one-dimensional weight subspace of the
nilpotent subalgebra $\n_+$ of $\g$ corresponding to the maximal root
(this is a highest weight vector in the adjoint representation of
$\g$). For instance, if $\g=\sw_r$, we can take as $e_{\theta}$ the
matrix with 1 in the upper right corner and 0 in all other
places.

Note that $f_i, i=0,\ldots,n$, are the generators of the lower
nilpotent subalgebra $\wt\n_-$ of $g\zpart$, which consists of all
elements of $\g[z]$ whose value at $z=0$ is in $\n_- \subset
\g$. Therefore,
$$
p_{-1} = \sum_{i=0}^n f_i 
$$
may be viewed as a ``principal nilpotent element'' of $\wt\n_-$.

Now we shift the operators \eqref{eta L} by $f_0 = e_\theta z$. We
then obtain the following operators:
\begin{equation}    \label{eta L1}
\pa_t + A(t) + e_\theta z, \qquad A(t) \in \g\ppart,
\end{equation}
Since $f_0 = e_{\theta} z$ is stable under the action of $N_+\ppart$,
this shift does not change the gauge action of $N_+\ppart$. Therefore,
we can identify the reduced phase space ${\mc M}(\g)$ of the
Drinfeld--Sokolov reduction with the quotient of the space $\wt{\mc
  M}(\g)$ of operators of the form
\begin{equation}    \label{reduced1}
\pa_t + p_{-1} + {\mb v}(t), \qquad {\mb v}(t) \in \bb_+\ppart,
\end{equation}
by the gauge action of the loop group $N_+\ppart$.

According to Lemma \ref{free}, each gauge equivalence class contains a
unique operator of the form
\begin{equation}    \label{untw form}
\pa_t + p_{-1} + {\mathbf v}(t), \qquad {\mathbf v}(t) \in
V_{\can}\ppart.
\end{equation}
We will denote this quotient by ${\mc M}(\ghat)$. It is isomorphic to
the space ${\mc M}(\g)$, but its elements are differential operators
with ``spectral parameter'' $z$ that we need to construct the
$\ghat$-KdV Hamiltonians.

For example, ${\mc M}(\wh\sw_2)$ is the quotient of the space of
operators of the form
$$
\pa_t + \begin{pmatrix} a(t) & b(t) + z \\
  1 & -a(t) \end{pmatrix}, \qquad a(t),b(t) \in \C\ppart,
$$
by the upper triangular gauge transformations depending on $t$ (but
not on $z$). Each gauge equivalence class contains a unique operator of
the form
$$
\pa_t + \begin{pmatrix} 0 & v(t) + z\\
  1 & 0 \end{pmatrix}, \qquad v(t) \in \C\ppart,
$$
and hence we may identify ${\mc M}(\wh\sw_2)$ with the space of such
operators, or, equivalently, with the space of second order
differential operators with spectral parameter
$$
\pa_t^2 - v(t) - z, \qquad v(t) \in \C\ppart.
$$

Likewise, the space ${\mc M}(\wh\sw_r)$ may be identified with the
space of $n$th order differential operators with spectral parameter
$$
\pa_t^r - v_1(t) \pa_t^{r-2} + \ldots + (-1)^r v_{n-2}(t) \pa_t - (-1)^r
v_{r-1}(t) - (-1)^r z.
$$

The reason why inserting the spectral parameter is important is that
after we do that we can define the $\ghat$-KdV Hamiltonians. These are
Poisson commuting functions (more properly, functionals) on the
reduced phase space ${\mc M}(\ghat) = {\mc M}(\g)$. There are two
types of $\ghat$-KdV Hamiltonians: local and non-local, and they are
both constructed using the formal monodromy matrix of the operators
\eqref{reduced} specialized to different representations of
$\g$. Because our operators now depend on $z$, the monodromy matrix
depends on $z$ as well, and this enables us to take the coefficients
of its expansion.

More precisely, let $M_V(z) \in G$ be the monodromy matrix of the
operator \eqref{eta L1} specialized to an irreducible representation
$V$ of $\g$ (see Section 3.2 of \cite{FF:sol} for the precise
definition). For any $\g$-invariant function $\varphi$ on $V$ the
corresponding function $\wt{H}_\varphi(z) = \varphi(M_V(z))$ on
$\wt{\mc M}(\ghat)$ is invariant under the gauge action of $N_+\ppart$
and hence gives rise to a well-defined function $H_\varphi(z)$ on the
quotient ${\mc M}(\ghat)$.

The asymptotic expansion of $H_\varphi(z)$ at $z=\infty$ yields the
local $\ghat$-KdV Hamiltonians, which generate the $\ghat$-KdV
hierarchy of commuting Hamiltonian flows on the Poisson manifold ${\mc
  M}(\ghat)$ (see \cite{DS}). These Hamiltonians have the form
\begin{equation}    \label{class local}
H_s = \int P_s(v_j(t),v'_j(t),\ldots) dt, \qquad s = d_i + Nh, \quad N
\in \Z_+,
\end{equation}
where $d_i \in E$ is an exponent of $\ghat$ and $h$ is the Coxeter
number. The integrand $P_s$ is a differential polynomial of degree
$s+1$, where we set $\deg v^{(m)}_j = d_j+m+1$.

On the other hand, the $z$-expansion of $H_\varphi(z)$ at $z=0$ yields
Poisson commuting non-local $\ghat$-KdV Hamiltonians.

Poisson commutativity of these Hamiltonians is easily proved from the
commutativity of the functions $H_\varphi(z)$ (see, for example,
\cite{RS,RSF,Reyman}).

\subsection{Miura transformation}

A convenient way to compute the higher order terms in the
$z$-expansion of $\varphi(M_V(z))$ is to realize the variables of the
KdV hierarchy in terms of the variables of the modified KdV (mKdV)
hierarchy. This provides a kind of ``free field realization,'' also
known as the Miura transformation, for the commuting Hamiltonians.

Consider the space $\ol{\mc M}(\ghat)$ of operators of the form
\begin{equation}    \label{u t}
\pa_t + p_{-1} + {\mb u}(t), \qquad {\mb u}(t) \in \h\ppart.
\end{equation}
The natural map $\ol{\mc M}(\ghat) \to {\mc M}(\ghat)$ given by the
composition of the inclusion $\ol{\mc M}(\ghat) \to \wt{\mc M}(\ghat)$
and the projection $\wt{\mc M}(\ghat) \to {\mc M}(\ghat)$ is called
the Miura transformation (see \cite{DS}). It is a Poisson map with
respect to the Heisenberg--Poisson structure on $\ol{\mc M}(\ghat)$
and the Drinfeld--Sokolov Poisson structure on ${\mc
  M}(\ghat)$. Therefore it gives rise to an embedding of the classical
${\mc W}$-algebra ${\mc W}(\g)$ into the Heisenberg--Poisson algebra
of functions on $\ol{\mc M}(\ghat)$.

Because the operator \eqref{u t} has such a simple structure, it is
easier to compute the monodromy matrix $M_V(z)$, and hence the
functions $\varphi(M_V(z))$, for it rather than for the operators of
the form \eqref{reduced1}. The coefficients of the asymptotic
expansion of the function $\varphi(M_V(z))$ are the local Hamiltonians
of the modified KdV (or mKdV) hierarchy associated to $\ghat$. They are
connected to the above $\ghat$-KdV Hamiltonians by the Miura
transformation. On the other hand, the coefficients in the
$z$-expansion of $\varphi(M_V(z))$ are the non-local $\ghat$-mKdV
Hamiltonians.

For example, in the case when $\g=\sw_2$ the operator \eqref{u t} has
the form
$$
\pa_t + \begin{pmatrix} u(t) & z \\ 1 & -u(t) \end{pmatrix}.
$$
The coefficients in the $z$-expansion of the trace of the monodromy of
this operator are written down explicitly in \cite{BLZ1}. They are
given by multiple integrals of $\exp(\pm 2\phi(t))$, where $\phi(t)$
is the anti-derivative of $u(t)$, that is, $u(t) = \phi'(t)$ (these
are classical screening operators, see \cite{FF:laws}).

Similar formulas may be obtained for other affine Kac--Moody algebras.

\subsection{Twisted affine algebras}    \label{twisted aff}

Finally, we consider the case of a twisted affine Kac--Moody algebra
$\ghat$. We recall that it is constructed from a finite-dimensional
simple Lie algebra $\gt$ whose Dynkin diagram has an automorphism of
order $r=2$ or $3$. Using the Cartan generators of $\gt$, we obtain
an automorphism of $\gt$ of the same order denoted by $\sigma$. The
Lie algebra $\gt$ decomposes into a direct sum of eigenspaces of
$\sigma$:
$$
\gt = \bigoplus_{\ol{i} \in \Z/r\Z} \gt_{\ol{i}},
$$
where $\gt_{\ol{0}}$ is the simple Lie algebra corresponding to the
quotient of the Dynkin diagram of $\gt$ by the action of the
automorphism. The twisted affine Kac--Moody algebra $\ghat$ is defined
as the universal central extension of the twisted loop algebra ${\mc
  L}_\sigma \g$, which is the completion of the Lie algebra
$$
\bigoplus_{n \in \Z} \gt_{\ol{n}} \otimes z^n
$$
in $\gt\zpart$.

Let $f_i, i=1,\ldots,n$, be the generators of the lower nilpotent
subalgebra of $\gt_{\ol{0}}$, and $f_0 = e_{\theta_0} z$, where
$e_{\theta_0}$ is a non-zero generator of the one-dimensional highest
weight subspace of the $\g_{\ol{0}}$-module $\g_{\ol{1}}$ (this is the
twisted affine algebra analogue of the element $e_\theta z$). We
denote its weight (from the point of view of the Cartan subalgebra of
$\gt_0$) by $\theta_0$. The element
$$
p_{-1} = \sum_{i=0}^n f_i
$$
is then the ``principal nilpotent element'' of the twisted affine
Kac--Moody algebra $\ghat$. Therefore the analogues of the operators
\eqref{reduced1} are the operators of the form
\begin{equation}    \label{reduced tw}
\pa_t + p_{-1} + {\mb v}(t),
  \qquad {\mb v}(t) \in \bb_{\ol{0},+}\ppart,
\end{equation}
where $\bb_{\ol{0},+} = \bb'_+ \cap \gt_{\ol{0}}$ is the Borel
subalgebra of $\gt_{\ol{0}}$. We denote the space of operators
\eqref{reduced tw} by $\wt{\mc M}(\ghat)$.

Next, we take the quotient of the space $\wt{\mc M}(\ghat)$ by the
gauge action of the loop group $N_{\ol{0},+}\ppart$, where
$N_{\ol{0},+}$ is the unipotent Lie group corresponding to the Lie
algebra $\n_{\ol{0},+} = \n'_+ \cap \gt_{\ol{0}}$.

Since $f_0$ is invariant under the action of $N_{\ol{0},+}\ppart$, we
can remove $f_0$ from \eqref{reduced tw} (in the same way as in the
untwisted case); that is, we can replace $p_{-1}$ by the
element $\ol{p}_{-1} = \sum_{i=1}^n f_i$ of $\gt_{\ol{0}}$. The
resulting space consists of the operators of the form
\begin{equation}    \label{reduced tw1}
\pa_t + \ol{p}_{-1} + {\mb v}(t),
  \qquad {\mb v}(t) \in \bb_{\ol{0},+}\ppart
\end{equation}
and hence coincides with the space $\wt{\mc M}(\gt_{\ol{0}})$ that we
considered in Section \ref{untw} when we discussed the untwisted
case. Therefore the quotient of $\wt{\mc M}(\ghat)$ by the gauge
action of $N_{\ol{0},+}\ppart$ is nothing but the reduced space ${\mc
  M}(\gt_{\ol{0}})$ arising in the Drinfeld--Sokolov reduction of the
untwisted affine algebra $\wh\gt_{\ol{0}}$ which is the central
extension of the loop algebra $\gt_{\ol{0}}\zpart$. The reduced phase
space is therefore the same as the one which we get in the case of the
untwisted affine algebra $\wh\gt_{\ol{0}}$, and so the corresponding
Poisson algebra is nothing but the classical ${\mc W}$-algebra ${\mc
  W}(\gt_{\ol{0}})$.

However, we now insert the spectral parameter differently, by
shifting our operators by $e_{\theta_0} z$ (rather than $e_\theta
z$). Then, using Lemma \ref{free}, we can realize the space ${\mc
  M}(\gt_{\ol{0}})$ as the space of differential operators of the form
\begin{equation} \label{tw form} \pa_t + \ol{p}_{-1} + e_{\theta_0} z
  + {\mathbf v}(t), \qquad {\mathbf v}(t) \in V_{\can}\ppart.
\end{equation}
That is to say, we ``insert'' the element $f_0 = e_{\theta_0} z \in
\gt_{\ol{1}} z$ of the twisted loop algebra ${\mc L}_\sigma \gt$
rather than the element $e_\theta z \in \gt_{\ol{0}} z$ of the
untwisted loop algebra $\g_{\ol{0}}\zpart$ (see formula \eqref{untw
  form}).

We then construct the invariants of the monodromy matrices for
these operators in the same way as in the untwisted case. Note,
however, that these monodromy matrices are different from the
monodromy matrices for the operators \eqref{untw form} corresponding
to the untwisted affine algebra $\wh{\gt_{\ol{0}}}$, even though both
may be viewed as functionals on the same reduced phase space ${\mc
  M}(\gt_{\ol{0}})$. The expansions of the invariants of the monodromy
matrices at $z = \infty$ and $z=0$ give rise to the local and
non-local classical Hamiltonians, respectively, for the KdV system
corresponding to the twisted affine algebra $\ghat$ we started
with. These Hamiltonians are different from the those defined for the
operators \eqref{untw form} corresponding to the untwisted affine
algebra $\wh{\gt_{\ol{0}}}$, even though in both cases they are
functionals on the same reduced phase space ${\mc M}(\gt_{\ol{0}})$.

\section{Quantum KdV system}    \label{qkdv}

\subsection{Local Hamiltonians}

KdV Hamiltonians can be quantized. First, let's consider the problem
of quantization of the classical local $\ghat$-KdV Hamiltonians. They
are Poisson commuting elements of the classical ${\mc W}$-algebra
${\mc W}(\g)$; namely, the Poisson algebra of local functionals on the
space ${\mc M}(\ghat)$ discussed in the previous section. It has been
shown in \cite{FF:laws} that ${\mc W}(\g)$ can be quantized; that is,
there exists a one-parameter associative algebra ${\mc W}_\beta(\g)$
whose limit as $\beta \to 0$ is a commutative algebra with a natural
Poisson structure that is isomorphic to ${\mc W}(\g)$. For example,
${\mc W}_\beta(\sw_2)$ is a completed enveloping algebra of the
Virasoro algebra.

More precisely, in \cite{FF:laws} it was shown that ${\mc
  W}_\beta(\g)$ may be defined as a subalgebra in a Heisenberg
algebra. Here by ``Heisenberg algebra'' we mean a completion of the
universal enveloping algebra of the Heisenberg Lie subalgebra $\hhat$
of $\ghat$ (central extension of the formal loop algebra $\h\ppart$)
in which the central element is identified with the identity. The
${\mc W}$-algebra ${\mc W}_\beta(\g)$ is defined for generic $\beta$
(i.e., such that $\beta^2$ is not a rational number) as the
intersection of the kernels of the so-called screening operators,
which depend on $\beta$, associated to the simple roots of $\g$ (see
\cite[Section 4.6]{FF:laws} or \cite[Section 15.4.11]{FB}). The
resulting algebra is then extended to all complex values of
$\beta$. This implies, in particular, that each Fock representation
$\pi_\mu$ of $\hhat$ (where $\mu \in \h^*$ is the highest weight) is
naturally a module over ${\mc W}_\beta(\g)$. In fact, ${\mc
  W}_\beta(\g)$ is first defined as a vertex subalgebra of the
Heisenberg vertex algebra $\pi_0$, and then as an associative algebra
corresponding to this vertex algebra (see \cite{FF:laws} and
\cite[Chapter 15]{FB}).

In the limit $\beta \to 0$ we obtain an embedding of ${\mc W}(\g)$
(viewed as a Heiseberg--Poisson algebra) into the Heisenberg--Poisson
algebra of functions on the space $\ol{\mc M}(\ghat)$ discussed in the
previous section. This embedding is induced by the Miura
transformation $\ol{\mc M}(\ghat) \to {\mc M}(\ghat)$ discussed in the
previous section. Thus, the embedding of the quantum ${\mc W}$-algebra
${\mc W}_\beta(\g)$ into the Heisenberg algebra may be viewed as a
quantization of the Miura transformation.

Alternatively, the quantum ${\mc W}$-algebra ${\mc W}_\beta(\g)$ may
be defined via the quantum Drinfeld--Sokolov reduction, see
\cite{FF:ds,FF:gd} and \cite[Chapter 15]{FB} (the equivalence between
the two constructions of ${\mc W}_\beta(\g)$ is discussed in
\cite[Chapter 15.4]{FB}).

Now, by a quantization of local $\ghat$-KdV Hamiltonians we understand
a commutative subalgebra of ${\mc W}_\beta(\g)$ whose limit as $\beta
\to 0$ is the Poisson commutative subalgebra of ${\mc W}(\g)$
generated by the classical local KdV Hamilnonians \eqref{class
  local}. (If $\ghat$ is a twisted affine algebra, then by $\g$ we
mean here the Lie algebra $\gt_{\ol{0}}$, as in the previous section.)
The existence of this quantization is a non-trivial statement, which
has been proved in \cite{FF:laws}. Local quantum $\ghat$-KdV
Hamiltonians are elements $\wh{H}_s$ of this commutative subalgebra
which are quantizations of the $H_s$ given by formula \eqref{class
  local} in the sense that $\wh{H}_s$ tends to $H_s$ when $\beta \to
0$, if we rescale the generators of ${\mc W}_\beta(\g)$ in the
appropriate way (see \cite{FF:laws} for details).

If we apply the quantum Miura transformation to the local quantum
$\ghat$-KdV Hamiltonians $\wh{H}_s$, we obtain the corresponding local
quantum $\ghat$-mKdV Hamiltonians. Those may also be viewed as quantum
integrals of motion of the affine Toda field theory associated to
$\ghat$. The latter is equivalent to saying that they commute with the
screening operators associated to the simple roots of $\ghat$ (see
\cite{FF:laws} for details).

The commutative algebra of local quantum $\ghat$-KdV Hamiltonians acts
on any module over ${\mc W}_\beta(\g)$ on which the eigenvalues of the
Virasoro operator $L_0$ are ``bounded from below'' (that is, the set
of these eigenvalues is the union of the sets of the form $\gamma +
\Z_+, \gamma \in \C$). We will refer to such modules as highest weight
modules. In particular, we can consider their action on the Fock
representations $\pi_\mu, \mu \in \h^*$.

After passing to the periodic coordinate $\varphi$ such that
$z=e^{i\varphi}$, we obtain a commuting algebra of quantum
Hamiltonians which includes the Virasoro operator $L_0$. Thus, all
local quantum Hamiltonians are homogeneous of degree $0$ with respect
to the grading defined by $L_0$, and so they preserve the homogeneous
components of highest weight modules. In the case of Fock
representations $\pi_\mu$ these components are finite-dimensional. It
is natural to ask what are the spectra of the local quantum
$\ghat$-KdV Hamiltonians on these components. This problem naturally
arises in the study of deformations of conformal field theories with
${\mc W}$-algebra symmetry \cite{Zam,EY,KM,FF:laws}. However, it
proved to be elusive for general $\beta$, because we do not have much
structure on the commutative algebra generated by the local quantum
KdV Hamiltonians.

\subsection{Non-local Hamiltonians}    \label{nlham}

A breakthrough in the study of the quantum KdV system was made by
Bazhanov, Lukyanov, and Zamolodchikov in a series of papers starting
with \cite{BLZ1}, in which a procedure for quantization of the {\em
  non-local} classical KdV Hamiltonians was developed. We recall from
the previous section that those can be obtained from the traces of the
monodromy matrix of the first order differential operators \eqref{u t}
(taken in the Miura form) with spectral parameter $z$, expanded as a
power series near $z=0$.

The quantum non-local $\ghat$-KdV Hamiltonians were introduced in
\cite{BLZ1} in the case of $\ghat=\wh\sw_2$ and in \cite{BHK} in the
case of $\g=\wh\sw_3$. As noted in \cite{BHK}, the latter construction
generalizes in a straightforward way to other affine algebras $\ghat$.

These non-local Hamiltonians are defined as elements of the completed
Heisenberg algebra acting on the Fock representations $\pi_\mu$ for
$\mu \in \h^*$. More precisely, according to the construction of
\cite{BLZ1,BHK}, for each finite-dimensional representation $V$ of
$U_q(\ghat)$, one defines a generating series of commuting non-local
Hamiltonians $T_{V,n}$ by the formula
\begin{equation}    \label{tv}
T_V(z) = \sum_{n\geq 0} T_{V,n} z^n = \on{Tr}_{V(z)}(e^{\pi i {\mb P}
  \cdot h} \; {\mc L}).
\end{equation}
Here ${\mc L}$ is the operator obtained from the reduced universal
$R$-matrix of $U_q(\ghat)$, which is an element of a completion of the
tensor product $U_q(\wh\n_+) \otimes U_q(\wh\n_-)$, by mapping the first
factor to $\on{End} V(z)$ and the second factor to $\on{End} \oplus
\pi_\mu$ using the screening operators corresponding to the simple
roots of $\ghat$ (they satisfy the Serre relations of $U_q(\wh\n_-)$),
and
$$
{\mb P} \cdot h = \sum_{j=1}^n {\mb P}^j h_j.
$$
Here $\{ h_j \}$ is a basis in $\h$ and $\{ {\mb P}^j \}$ is the dual
basis in $\h$ with respect to the inner product on $\h$ obtained by
restricting the normalized invariant inner product $\kappa_0$ on $\g$
(see Section \ref{untw}). The elements $h_j$ are defined so that $K_j
= q^{h_j}$ are the standard Drinfeld-Jimbo generators of $U_q(\g)
\subset U_q(\ghat)$, where
$$
q = e^{\pi i \beta^2}
$$
The $h_j$'s act on $V$. On the other hand, the elements ${\mb P}^j$
are elements of the constant Cartan subalgebra $\h$ in the Heisenberg
algebra $\hhat$. They act on the Fock representation $\pi_\mu$
according to the formula ${\mb P}^j \mapsto \mu({\mb P}^j) \on{Id}$.

Note that this $T_V(z)$ may be viewed as a quantum analogue of the
monodromy matrix $M_V(z)$, see Section \ref{mon}.

The claim of \cite{BLZ1,BHK} is that the formal power series $T_V(z)$
commute with each other:
$$
[T_V(z),T_W(w)] = 0
$$
for any finite-dimensional representations $V$ and $W$ of
$U_q(\ghat)$. They also commute with the local quantum $\ghat$-KdV
Hamiltonians. Technically, this is proved in \cite{BLZ1,BHK} for
$\g=\sw_2$ and $\sw_3$, but the proof generalizes to other simple Lie
algebras. Alternatively, these statements can be proved \cite{FFS}
using the methods of \cite{FF:laws} (see Sects. 3.3 and 5.3 of
\cite{FF:sol} for an outline).

Furthermore, it follows from the construction that
$$
T_{V \oplus W}(z) = T_V(z) + T_W(z), \qquad T_{V \otimes W}(z) =
T_V(z) T_W(z).
$$
Thus, we obtain an action of the commutative algebra $K_0({\mc C})$,
where ${\mc C}$ is the category of finite-dimensional representations
of $U_q(\ghat)$ (see Section \ref{fdrep}), by endomorphisms of the
Fock representation $\pi_\mu$ for any $\mu \in \h^*$.

All of the non-local Hamiltonians $T_{V,n}$ are homogeneous
endomorphisms of $\pi_\mu$ of degree 0 with respect to the grading by
the Virasoro operator $L_0$ (which is in fact the simplest local
quantum KdV Hamiltonian). Therefore we obtain an action of $K_0({\mc
  C})$ on each finite-dimensional graded component of $\pi_\mu$. This
action of non-local quantum $\ghat$-KdV Hamiltonians commutes with the
action of the local quantum $\ghat$-KdV Hamiltonians, and so these
Hamiltonians have common eigenvectors. Furthermore, according to
\cite{BLZ1,BHK,BLZ}, the eigenvalues of the local Hamiltonians can be
recovered from the eigenvalues of the non-local ones as coefficients
of their asymptotic expansions. Therefore it makes sense to replace
the question of describing the spectra of the local $\ghat$-KdV
Hamiltonians with the question of describing the joint eigenvalues of
the non-local Hamiltonians. We now have a lot of additional structure
because, by construction, these Hamiltonians correspond to elements of
the algebra $K_0({\mc C})$ and therefore they must satisfy the
relations in this algebra, such as the $T$-system satisfied by the
classes of the KR modules (see \cite{HCrelle, IIKNS} and references
therein).

Bazhanov, Lukyanov, and Zamolodchikov then made an additional step of
generalizing this construction to infinite-dimensional representations
of $U_q(\wh\bo_+)$. Indeed, formula \eqref{tv} only requires $V$ to be
a representation of $U_q(\wh\bo_+)$, and hence we can take as $V$ a
representation from the category ${\mc O}$ (see Section
\ref{catO}). For instance, we can take $V=L^+_{i,a}$. Then, in the
case $\g=\sw_2$, if we denote $T_V(z)$ by $Q(z)$, we obtain the Baxter
relation linking $Q(z)$ and $T_W(z)$, where $W$ is the two-dimensional
fundamental representation. In fact, it is in this context that the
representations $L^+_{i,a}$ were discovered in \cite{BLZ2,BLZ3} and
\cite{BHK} for $\g=\sw_2$ and $\sw_3$ (in \cite{Ko} this construction
was generalized to the case of $\sw_r$).

However, if $V$ is infinite-dimensional, then we need to address the
convergence of the trace over $V$. {\em A priori}, it is not clear
that the trace over $V$, and hence $T_V(z)$ given by formula
\eqref{tv}, is well-defined. One way to approach this question is to
use the term $e^{\pi i {\mb P} \cdot h}$ in formula \eqref{tv}, which
acts on $\pi_\mu$ as $\exp(\pi i \sum_j \mu({\mb P}^j) h_j)$. If we
denote $e^{\pi i \mu({\mb P}^j)}$ by $u_j$, we can express the trace
as a power series in the $u_j$. Then the trace will make sense as a
formal power series in the $u_j, j=1,\ldots,n$. This is similar to the
approach taken in \cite{FH}. However, in applications to quantum field
theories (such as conformal field theories and their deformations),
one needs to consider the $u_j$ as specific numbers. Then the
convergence of the trace could become problematic. Nevertheless, it is
natural to conjecture that the series will converge for generic values
of $u_j$ -- that is, for generic $\mu \in \h^*$. We formulate this as
the following

\begin{conj}    \label{generic}
  For generic $\mu \in \h^*$, there is an action of the commutative
  algebra $K_0({\mc O})$ on the Fock representation $\pi_\mu$ of
  $\hhat$ which commutes with the local quantum $\ghat$-KdV
  Hamiltonians (including the operator $L_0$).
\end{conj}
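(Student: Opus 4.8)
The plan is to establish the single substantive point: for $\mu$ in a suitable nonempty open region of $\h^*$, the formal expression \eqref{tv} defining $T_V(z)$ for $V$ in the category $\mc O$ produces a genuine operator on $\pi_\mu$ --- equivalently, since each coefficient $T_{V,n}$ is a degree-$0$ endomorphism, it restricts to an honest finite matrix on every $L_0$-homogeneous component of $\pi_\mu$. Granting this, the remaining assertions come for free: the identities $T_{V\oplus W}=T_V+T_W$, $T_{V\otimes W}=T_VT_W$, $[T_V(z),T_W(w)]=0$ and $[T_V(z),\wh{H}_s]=0$ hold at the level of formal power series by the construction of \cite{BLZ1,BHK} (and, at the level of the $\mc L$-operator, by the methods of \cite{FF:laws,FFS}), so once convergence is in hand they become genuine operator identities, yielding an action of the commutative algebra $K_0(\mc O)$ that commutes with the local quantum $\ghat$-KdV Hamiltonians, $L_0$ included.

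To prove the convergence I would first reduce to the building blocks of $\mc O$. Since $\Psib\in\mfr$ means that each $\Psi_i(z)$ is rational, it is a product of linear factors $(1-za)^{\pm1}$ times a constant, so every simple $L(\Psib)$ is a subquotient of a tensor product of prefundamental representations $L^\pm_{i,a}$ and one-dimensional modules $[\omega]$; by the multiplicativity and additivity above it is enough to treat $V=L^\pm_{i,a}$ and $V=[\omega]$ (the latter being trivial, and the finite-dimensional modules $W_{1,a}^{(i)}$ being covered already since their trace is finite). Fix such a $V$ and an $L_0$-homogeneous component $\pi_\mu[m]$. A matrix coefficient of $T_{V,n}$ on $\pi_\mu[m]$ is then an infinite sum, indexed by the weight spaces $V_\omega$ with $\omega=\overline{\lambda_{\mathrm{top}}}\prod_i\ga_i^{-n_i}$, of the scalar by which $e^{\pi i{\mb P}\cdot h}$ acts on $V_\omega$ times a matrix element of the operator $\mc L$ (built from the reduced universal $R$-matrix and the screening operators). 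That scalar has modulus $\asymp\prod_i|c_i(\mu)|^{n_i}$ for fixed complex numbers $c_i(\mu)$ depending on $\mu$ through an exponential, so it decays geometrically in $\sum_i n_i$ precisely on the open region where $|c_i(\mu)|<1$ for every $i$ --- this is the ``generic $\mu$'' region, which one can enlarge by trading decay against powers of $z$. Against this decay one must control (a) $\dim V_\omega$, which grows at most polynomially in the $n_i$ for the prefundamental modules (hence also for their tensor products), and (b) the size of the matrix elements of $\mc L$ that enter the $V_\omega$-block of the trace.

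The hard part is estimate (b). The reduced universal $R$-matrix of $U_q(\ghat)$ factorizes, Khoroshkin--Tolstoy-style, as an ordered product of $q$-exponentials of the root vectors over the positive roots of $\ghat$; realizing its lower factor through the screening operators $S_i$ --- contour integrals of vertex operators acting on $\bigoplus_\mu\pi_\mu$ --- one should be able to bound the contribution of total ``size'' $\sum_i n_i$ on $\pi_\mu[m]$ by $C(\mu,m)^{\sum_i n_i}$, or better, with an inverse-factorial suppression coming from the $q$-exponentials, uniformly in the combinatorial type of that contribution. This is exactly the estimate carried out by hand for $\ghat=\wh\sw_2$ in \cite{BLZ2,BLZ3} (where $\mc L$ is essentially a single $q$-oscillator exponential and the $q$-oscillator trace converges for generic highest weight) and for $\ghat=\wh\sw_3$ in \cite{BHK}; the content of the conjecture is that it goes through uniformly for all untwisted $\ghat$, and I would obtain it by combining the explicit control of the $q$-deformed root vectors with standard estimates on iterated vertex-operator contour integrals. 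Once (a) and (b) are in place the matrix-coefficient series converge absolutely on the chosen region of $\mu$, and, being the restrictions of a single formal power series in the variables $u_j=e^{\pi i\mu({\mb P}^j)}$, the resulting operators do not depend on the region and extend by analytic continuation. I expect this uniform screening-operator bound --- together with the bookkeeping needed to pass from the building blocks to all of $K_0(\mc O)$, in particular to its formal infinite sums --- to be the principal difficulty; everything else is formal.
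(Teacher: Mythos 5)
The statement you are trying to prove is stated in the paper as a \emph{conjecture}, and the paper offers no proof of it: Section \ref{nlham} explains that the trace \eqref{tv} over an infinite-dimensional $V$ makes sense as a formal power series in the variables $u_j=e^{\pi i\mu({\mb P}^j)}$, but that its convergence for numerical $u_j$ ``could become problematic,'' and this convergence is exactly what Conjecture \ref{generic} asserts. Your proposal correctly isolates that single substantive point and correctly observes that the algebraic identities ($T_{V\oplus W}=T_V+T_W$, $T_{V\otimes W}=T_VT_W$, commutativity with the $\wh{H}_s$) are formal consequences of the construction of \cite{BLZ1,BHK} once convergence is in hand. But the convergence itself is not established by your argument: your estimate (b) --- a bound of the form $C(\mu,m)^{\sum_i n_i}$, uniform in the combinatorial type, on the matrix elements of ${\mc L}$ arising from the Khoroshkin--Tolstoy factorization realized through iterated screening-operator contour integrals --- is precisely the open analytic content of the conjecture. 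Saying that one ``should be able to bound'' these contributions by combining explicit control of the $q$-deformed root vectors with ``standard estimates on iterated vertex-operator contour integrals'' names the difficulty without resolving it; no such uniform estimate is known for general untwisted $\ghat$, and the fact that it had to be done by hand even for $\wh\sw_2$ and $\wh\sw_3$ in \cite{BLZ2,BLZ3,BHK} (and only in restricted ranges of $\beta$) is an indication that it is not routine.

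Two secondary points also need attention. First, your reduction to the building blocks $L^\pm_{i,a}$ and $[\omega]$ via subquotients of tensor products is not automatic for traces: the trace over a subquotient is controlled by the trace over the tensor product only if you already have absolute convergence of the weight-graded sum together with norm bounds on the blocks of ${\mc L}$, which is again estimate (b); moreover a general element of $K_0({\mc O})$ is a formal infinite sum $\sum_{\Psibs}\lambda_{\Psibs}[L(\Psib)]$, so a further layer of convergence (in $\Psib$, not just in the weight grading of a fixed $V$) must be addressed, and your proposal only mentions this in passing. Second, the ``generic $\mu$'' region you produce depends on the requirement $|c_i(\mu)|<1$, and the claimed extension to all generic $\mu$ by analytic continuation presupposes that the matrix coefficients, as functions of the $u_j$, continue to a dense open set --- which again requires knowing more about the series than formal power series structure. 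In short, the proposal is a reasonable strategy that matches the paper's own heuristic, but it does not constitute a proof: the central estimate is assumed, not proven, and it is exactly what makes the statement a conjecture.
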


\subsection{Connection to the $Q\wt{Q}$-system}

Conjecture \ref{generic} means that for any relation in $K_0({\mc
  O})$, the joint eigenvalues of the corresponding non-local
$\ghat$-KdV Hamiltonians will satisfy this relation for any joint
eigenvector in $\pi_\mu$ for generic $\mu \in \h^*$. In particular,
each joint eigenvector of the non-local $\ghat$-KdV Hamiltonians
should give rise to a solution of the $Q\wt{Q}$-system \eqref{Qsyst}
with $q=e^{\pi i \beta^2}$ and the corresponding Bethe Ansatz
equations \eqref{BAE} (provided that the genericity assumption of
Section \ref{bethe} holds).

In the case $\g=\sw_2$, the $Q\wt{Q}$-system reduces to the quantum
Wronskian relation of \cite{BLZ2,BLZ3,BLZ}, and in the case $\g=\sw_3$
the $Q\wt{Q}$-system reduces to relations considered in \cite{BHK}
(see Section \ref{firstex} for more details). However, for general
simple Lie algebras the $Q\wt{Q}$-system has not previously been
considered as a relation on the spectra of the non-local $\ghat$-KdV
Hamiltonians. Using the $Q\wt{Q}$-system in the study of non-local
$\ghat$-KdV Hamiltonians has the important advantage that we can use
the results of \cite{MRV1,MRV2}, where it was shown that solutions of
the same $Q\wt{Q}$-system naturally arises from the $^L\ghat$-opers
that were proposed in \cite{FF:sol} as the parameters for the spectra
of the non-local $\ghat$-KdV Hamiltonians. We will discuss these
affine opers in detail in the next section.

\subsection{Langlands duality of the spectra of quantum KdV
  Hamiltonians}    \label{duality}

We recall the Langlands duality of quantum ${\mc W}$-algebras
established in \cite{FF:gd} (see \cite[Section 4.8.1]{FF:laws} or
\cite[Section 15.4.15]{FB} for an exposition):
$$
{\mc W}_\beta(\g) \simeq {\mc W}_{\check\beta}({}^L\g),
$$
where $^L\g$ is the Langlands dual Lie algebra to $\g$ and
\begin{equation}    \label{betacheck}
\check\beta = -(\check{r})^{1/2}/\beta,
\end{equation}
$\check{r}$ being the maximal number of edges connecting the vertices
of the Dynkin diagram of $\g$ (so $\check{r}=1$ for simply-laced $\g$;
$\check{r}=2$ for $\g$ of types $B_n, C_n$, and $F_4$; and
$\check{r}=3$ for $\g=G_2$). The proof follows from the fact that for
generic $\beta$ the kernels of the screening operators corresponding
to simple roots of $\g$ and parameter $\beta$ are equal to the kernels
of the screening operators corresponding to the simple roots of $^L\g$
and parameter $\check\beta$. This implies the above isomorphism for
all $\beta$.

Since the local quantum $\ghat$-KdV Hamiltonians are defined as
elements in the intersection of the kernels of the screening operators
corresponding to the simple roots of $\g$ and parameter $\beta$, by
using the same argument, we identify for generic $\beta$ the
commutative algebra of local quantum $\ghat$-KdV Hamiltonians and the
commutative algebra of local quantum $^L\ghat$-KdV Hamiltonians (see
the end of \cite[Section 4.8.1]{FF:laws}).

Furthermore, recall from Section \ref{nlham} that the non-local
quantum $\ghat$-KdV Hamiltonians are defined \cite{BLZ,BHK} using the
the screening operators corresponding to the simple roots of $\ghat$
and parameter $\beta$. Since the latter essentially commute with the
screening operators corresponding to the simple roots of $^L\ghat$ and
parameter $\check\beta$, it is natural to expect that the non-local
quantum $\ghat$-KdV Hamiltonians commute with the non-local quantum
$^L\ghat$-KdV Hamiltonians (we use the normalized inner product on
$\h$ to identify $\h^*$ with $(^L\h)^* = \h$, so that the $\ghat$- and
$^L\ghat$-KdV Hamiltonians act on the same Fock representations). This
has been stated in \cite{BLZ2} in the case $\ghat=\wh\sw_2$ (see
formula (2.26)). We formulate this as a conjecture in general. (Note
that since the local quantum $\ghat$- and $^L\ghat$-KdV Hamiltonians
coincide, they automatically commute with the non-local quantum
$\ghat$- and $^L\ghat$-KdV Hamiltonians.)

\begin{conj}    \label{commute}
  The action of the non-local quantum $\ghat$-KdV Hamiltonians on a
  Fock representation $\pi_\mu, \mu \in \h^*$, commutes with the
  action of the non-local quantum $^L\ghat$-KdV Hamiltonians.
\end{conj}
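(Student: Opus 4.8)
\smallskip
\noindent\emph{Sketch of a possible proof of Conjecture \ref{commute}.}\quad
The plan is to reduce the statement to a single mutual-locality property of screening currents, following the philosophy of the proof of the Langlands duality of the $\mc W$-algebras themselves in \cite{FF:gd} (see also \cite{FF:laws}). Recall from Section \ref{nlham} that for a finite-dimensional $U_q(\ghat)$-module $V$ the series $T_V(z)$ is obtained from the reduced universal $R$-matrix of $U_q(\ghat)$ --- an element of a completion of $U_q(\wh\n_+)\otimes U_q(\wh\n_-)$ --- by evaluating the first tensor factor on $\on{End} V(z)$, evaluating the second tensor factor on $\on{End}\bigoplus_\mu\pi_\mu$ via the $\ghat$-screening operators $S_0,\dots,S_n$ (which satisfy the Serre relations of $U_q(\wh\n_-)$), and then taking the partial trace against $e^{\pi i\mb P\cdot h}$. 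The non-local $^L\ghat$-KdV Hamiltonians $T^L_W(w)$ are built in the same way from the reduced universal $R$-matrix of $U_{\check q}({}^L\ghat)$ with $\check q=e^{\pi i\check r/\beta^2}$, from the $^L\ghat$-screening operators $\check S_0,\dots,\check S_n$ at parameter $\check\beta$ of \eqref{betacheck}, and from an auxiliary space $W(w)$. I claim Conjecture \ref{commute} follows once one establishes
\begin{equation}\label{screencomm}
[\,S_i,\ \check S_j\,]=0\qquad\text{as operators on }\bigoplus_\mu\pi_\mu,\qquad i,j\in\{0,1,\dots,n\}.
\end{equation}

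First I would record why \eqref{screencomm} suffices. The auxiliary spaces $V(z)$ and $W(w)$ are independent tensor factors, so the image of $U_q(\wh\n_+)$ in $\on{End} V(z)$ commutes with the image of $U_{\check q}({}^L\wh\n_+)$ in $\on{End} W(w)$; and the two prefactors $e^{\pi i\mb P\cdot h}$ commute as well, since on $\pi_\mu$ the operators $\mb P^j$ act by the scalars $\mu(\mb P^j)$. Hence, granting \eqref{screencomm}, the two ``$\mc L$-operators'' $e^{\pi i\mb P\cdot h}\mc L$ and $e^{\pi i\mb P\cdot h}\check{\mc L}$ commute: each is a completed sum of terms of the form (operator on the respective auxiliary space) $\otimes$ (word in the respective screening operators), and the two kinds of words commute term by term by \eqref{screencomm}. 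Taking the partial traces $\on{Tr}_{V(z)}$ and $\on{Tr}_{W(w)}$ --- over independent auxiliary spaces --- then yields $[T_V(z),T^L_W(w)]=0$ by the usual $RTT$-type argument, which is Conjecture \ref{commute}. As a consistency check, for $\ghat=\wh\sw_2$ this identity is formula (2.26) of \cite{BLZ2}.

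The heart of the matter, and the step I expect to be the main obstacle, is \eqref{screencomm}. Each $S_i$ is the contour integral of a screening current $\mc S_i(t)$, a vertex operator for the Heisenberg algebra $\hhat$ attached to the simple root $\al_i$ of $\ghat$ with coupling governed by $\beta^2$; similarly $\check S_j=\oint\check{\mc S}_j(s)$, attached to the simple root ${}^L\al_j$ of $^L\ghat$ with coupling $\check\beta^2$. Two such screening charges commute on $\bigoplus_\mu\pi_\mu$ provided the mixed operator product $\mc S_i(t)\,\check{\mc S}_j(s)$ is single-valued, i.e.\ its branch exponent is an integer; that exponent is proportional to $\beta^2\check\beta^2$ times a matrix element of the affine Cartan data, hence to $\check r$ times such a matrix element by \eqref{betacheck}, and one checks that this is an integer using the affine Dynkin data together with the elementary fact that the Langlands dual of an affine Kac--Moody algebra is again affine with a compatible labelling of the nodes. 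For the finite indices $i,j\in\{1,\dots,n\}$ this mutual locality is exactly what underlies \cite{FF:gd,FF:laws}; the genuinely new cases are those involving $\al_0$ and/or ${}^L\al_0$, where the integrality must be checked directly and, more delicately, the relevant contour integrals must be controlled in the completed (loop) setting in which the affine screenings live. Finally, since --- as discussed in Section \ref{nlham} --- the construction of the non-local $\ghat$-KdV Hamiltonians and the convergence of the traces are at present rigorous only for $\ghat=\wh\sw_2$ and $\wh\sw_3$ and merely sketched in general via \cite{FF:laws} (cf.\ \cite{FFS}), the identities in \eqref{screencomm} and the statement of Conjecture \ref{commute} would first be proved as identities of formal power series in $z$, $w$ and in the variables $u_j=e^{\pi i\mu(\mb P^j)}$, with the analytic statement following on the locus where Conjecture \ref{generic} holds.
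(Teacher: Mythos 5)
The statement you are proving is stated in the paper as Conjecture \ref{commute}; the paper offers no proof, only the motivating heuristic in Section \ref{duality} that the $\ghat$-screening operators at parameter $\beta$ ``essentially commute'' with the $^L\ghat$-screening operators at parameter $\check\beta$. Your sketch is a fleshed-out version of exactly that heuristic, and your reduction of the trace identity to the mutual commutation \eqref{screencomm} is sound as far as it goes: since the two auxiliary spaces are independent tensor factors, termwise commutation of the two $\mc L$-operators would indeed follow from \eqref{screencomm}, and no $RTT$/intertwiner argument is needed. But the steps you defer are precisely the ones that make this a conjecture rather than a theorem, so the proposal does not close the statement.

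Two of these deferred steps are genuine gaps rather than routine verifications. First, ``integer branch exponent'' is not by itself sufficient for two screening charges to commute: when the mixed OPE exponent is a negative integer, the commutator of the charges is a residue at the diagonal, and one must check that this residue is a total derivative of a local field (this is what happens for the finite screenings underlying \cite{FF:gd,FF:laws}, where the exponent is $-2$ and the residue is proportional to $\partial_s$ of a normally ordered exponential). For exponents of larger negative magnitude the residue involves higher derivatives and products with $\partial\phi$, and it is not automatic that it integrates to zero; moreover, for the pairs involving $\al_0$ and ${}^L\al_0$ the relevant currents are attached to $-\theta$ and $-{}^L\theta$ rather than to simple roots of $\g$ and $^L\g$, so neither the integrality of $\beta\check\beta\,(\theta,{}^L\theta)$ nor the total-derivative property of the residue follows from the finite-type computation --- you would have to carry out this case explicitly, and you do not. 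Second, even granting \eqref{screencomm}, the passage from termwise commutation of the completed sums defining $\mc L$ and $\check{\mc L}$ to commutation of the traces requires rearranging infinite sums and interchanging them with $\on{Tr}_{V(z)}$ and $\on{Tr}_{W(w)}$ over infinite-dimensional auxiliary spaces from the category ${\mc O}$; your fallback to formal power series in $z$, $w$ and $u_j=e^{\pi i\mu(\mb P^j)}$ is the right instinct, but it presupposes that the individual Hamiltonians are well defined in that formal setting, which is the content of Conjecture \ref{generic} and is itself open beyond $\wh\sw_2$ and $\wh\sw_3$. In short: your strategy is the natural one and agrees with the route the authors themselves indicate, but as written it is a programme, not a proof.
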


This implies that using joint eigenvectors of the non-local quantum
$\ghat$- and $^L\ghat$-KdV Hamiltonians in $\pi_\mu, \mu \in \h^*$, we
obtain a surprising correspondence between solutions of the
$Q\wt{Q}$-systems (as well as other equations stemming from $K_0({\mc
  O})$ such as the $QQ^*$-system of \cite{HL}, see Section
\ref{QQstar}) for $U_q(\ghat)$ and $U_{\check{q}}({}^L\ghat)$, where
\begin{equation}    \label{qcheck}
q = e^{\pi i \beta^2}, \qquad
  \check{q} = e^{\pi i \check\beta^2} = e^{\pi i \check{r}/\beta^2}.
\end{equation}
This correspondence (or duality) deserves further study.

\section{Spectra of the quantum KdV Hamiltonians}    \label{kdv
  quantum}

In \cite{FF:sol}, it was conjectured that the spectra of the quantum
$\ghat$-KdV Hamiltonians can be paramet\-rized by $^L\ghat$-affine
opers of special kind. This conjecture was motivated by the results of
\cite{BLZ} in the case of $\su$ and an analogy between the quantum
$\ghat$-KdV system and the Gaudin model associated to a simple Lie
algebra $\g$, in which case the joint eigenvalues of the commuting
Hamiltonians are known to be parametrized by $^L \g$-opers. We refer
the reader to \cite{FF:sol} (especially, Sections 4.4, 5.4, and 5.5)
for the explanation of this analogy.

The general definition of a $\ghat$-affine oper was given in Section
4.1 of \cite{FF:sol}, inspired by the work of Drinfeld and Sokolov
\cite{DS} and Beilinson and Drinfeld \cite{BD,BD:opers}. Here we
review the $\ghat$-opers that are related to the joint eigenvalues of
the $\ghat$-KdV Hamiltonians according to the conjecture of
\cite{FF:sol}.

\subsection{The case of $\su$}    \label{sl2}

The $\su$-opers that appear here are gauge equivalence classes of the
first order differential operators on $\pone$, equipped with a
coordinate $z$, with values in the Lie algebra $\C {\mb d} \ltimes
\sw_2 \pparl$, where ${\mb d} = \la \pa_\la$, of the form
$$
\pa_z + \begin{pmatrix} a(z) & b(z) + \la \\
  1 & -a(z) \end{pmatrix} + \frac{k}{z} {\mb d}, \qquad k \in \C,
$$
(where $a(z)$ and $b(z)$ are rational functions in $z$) under the
action of the group $N_+$-valued rational functions in $z$, where
$N_+$ is the upper unipotent subgroup of $SL_2$.

Each gauge equivalence class contains a unique operator of the form
\begin{equation}    \label{un op}
\pa_z + \begin{pmatrix} 0 & v(z) + \la \\
  1 & 0 \end{pmatrix} + \frac{k}{z} {\mb d},
\end{equation}
and the operators relevant to the spectra of the quantum KdV
Hamiltonians are the ones with
\begin{equation}    \label{kdv form1}
v(z) = \frac{r(r+1)}{z^2} + \frac{1}{z} \left( 1 - \sum_{j=1}^m
\frac{k}{w_j} \right) + \sum_{j=1}^{m} \frac{2}{(z-w_j)^2} +
\sum_{j=1}^{m} \frac{k}{w_j} \frac{1}{z-w_j},
\end{equation}
such that the coefficients $v_{j,k}$ in the expansion of $v(z)$ in
$z-w_j$ satisfy the equations
\begin{equation}    \label{third order again}
\frac{1}{4} \left( \frac{k}{w_j} \right)^3 - \frac{k}{w_j} v_{j,0} +
v_{j,1} = 0, \qquad j=1,\ldots,m.
\end{equation}
As explained in Section 4.4 of \cite{FF:sol}, this is the condition that
the solutions of the differential equation corresponding to the
operator \eqref{un op} have no monodromy around the singular points
$w_j$.

As in \cite{FF:sol}, applying gauge transformation by $z^{k{\mb d}}$,
we can eliminate the term $\dfrac{k}{z} {\mb d}$ from \eqref{un op}
obtaining the operator
\begin{equation}
\pa_z + \begin{pmatrix} 0 & v(z) + \la z^k \\
  1 & 0 \end{pmatrix},
\end{equation}
which is equivalent to the following second order differential
operator with spectral parameter:
\begin{equation}    \label{final form kdv}
\pa_z^2 - \frac{1}{z} \left( 1 - \sum_{j=1}^m \frac{k}{w_j} \right) -
\frac{r(r+1)}{z^2} - \sum_{j=1}^{m} \frac{2}{(z-w_j)^2} -
\sum_{j=1}^{m} \frac{k}{w_j} \frac{1}{z-w_j} - \la z^k.
\end{equation}
Again, the equations \eqref{third order again} are equivalent to the
condition that this operator has no monodromy around $w_j,
j=1,\ldots,m$, and therefore no monodromy on $\pone$, except around
the points $0$ and $\infty$, for all values of $\la$. This operator
also has regular singularity at $z=0$ and the mildest possible
irregular singularity at $z=\infty$ (indeed, the restriction of
\eqref{final form kdv} to the punctured disc at the point $z=\infty$
has the form $\pa_s^2 - \wt{v}(s)$, where $\wt{v}(s) = 1/s^3 + \ldots$
with respect to the local coordinate $s=z^{-1}$ at $\infty$).

According to the proposal of \cite{FF:sol}, for generic $r$ and $k$
the differential operators \eqref{final form kdv} should encode the
common eigenvalues of the quantum KdV Hamiltonians on the irreducible
highest weight module over the Virasoro algebra with the central
charge
\begin{equation}    \label{ck}
c_k = 1 - \frac{6(k+1)^2}{k+2}
\end{equation}
and highest weight (with respect to the operator $L_0$)
\begin{equation}    \label{delta}
\Delta_{r,k} = \frac{(2r+1)^2-(k+1)^2}{4(k+2)}
\end{equation}
This ``numerology'' is explained as follows: the Virasoro algebra can
be obtained by the Drinfeld--Sokolov reduction (with respect to
$\n_-\ppart$) from the affine Kac--Moody algebra of level $k$ has
central charge $c_k$ (see \cite{FF:lmp}). Further, the
Drinfeld--Sokolov reduction of the irreducible $\su$-module with
highest weight $\la = 2r$ (``spin'' $r$) and level $k$ is the
irreducible module over the Virasoro algebra with the highest weight
$\Delta_{r,k}$ (provided that $k-\la \not\in \Z_+$); see
\cite{FF:lmp}. The number $m$ of poles of the $\su$-oper on $\pone \bs
\{ 0,\infty \}$ should be equal to the $L_0$-degree of the
corresponding eigenvector. By that we mean that it should occur in the
subspace in the irreducible module with highest weight $\Delta_{r,k}$
on which $L_0$ acts by of $\Delta_{r,k}+m$.

Note that in general, some of the poles $w_j$ may coalesce, see
Section 5.5 of \cite{FF:sol}.

\subsection{Change of variables}

The advantage of formula \eqref{final form kdv} is that it is clearly
linked, via a gauge transformation, to an affine oper \eqref{kdv
  form1}. However, the disadvantage of \eqref{final form kdv} is that
the spectral parameter $\lambda$ appears not by itself but multiplied
with $z^k$. To fix that, we make a change of variables $z \mapsto x$,
where
$$
z = \frac{x^{2\al+2}}{(2\al+2)^2}
$$
and
$$
\al = -\frac{k+1}{k+2}
$$
(we assume that $k \neq -2$). We note that this $\al$ is related to
the parameter $\beta$ discussed in Section \ref{qkdv} by the formula
\begin{equation}    \label{albeta}
\al+1 = \frac{1}{\beta^2}.
\end{equation}

Recall (see, e.g., \cite{FB}, Ch. 8.2) that the general transformation
formula for a second order operator (also known as a projective
connection) of the form
$$
\pa_z^2 - v(z): \Omega^{-1/2} \to \Omega^{3/2}
$$
(we need to consider our second order operators as acting from
$\Omega^{-1/2}$ to $\Omega^{3/2}$ to ensure that their property of
having the principal symbol $1$ and subprincipal symbol $0$ is
coordinate-independent) under the change of variables $z=\varphi(x)$
is
\begin{equation}    \label{proj conn}
v(z) \mapsto v(\varphi(x)) \left( \varphi'
\right)^2 - \frac{1}{2} \{ \varphi,x \},
\end{equation}
where
\begin{equation}    \label{schwarzian}
\{\varphi,x\} = \frac{\varphi'''}{\varphi'} - \frac{3}{2}\left(
\frac{\varphi''}{\varphi'} \right)^2
\end{equation}
is the Schwarzian derivative of $\varphi$.

Applying the change of variables $z \mapsto x$ to the operator
\eqref{final form kdv}, we obtain the operator
\begin{equation}    \label{new operator}
\pa_x^2 - \frac{\ell(\ell+1)}{x^2} -
x^{2\al} + 2 \frac{d^2}{dx^2} \sum_{j=1}^m \log(x^{2\al+2}
- z_j) + E,
\end{equation}
where
$$
\ell(\ell+1) = 4(\al+1)^2 r(r+1) + \al^2 + 2\al +
\frac{3}{4} = 4(\al+1)\Delta_{r,k} + \al^2 - \frac{1}{4},
$$
so that we have
$$
\Delta_{r,k} = \Delta(\ell,\al) = \frac{(2\ell+1)^2 -
  4\al^2}{16(\al+1)},
$$
and
\begin{align*}
z_j &= (2\al+2)^2 w_j, \\
E &= -(2\al+2)^{\frac{2\al}{\al+1}} \la.
\end{align*}
The operator \eqref{new operator} has the spectral parameter $E$
(which is obtained by rescaling $\la$) without any additional factors,
so that \eqref{new operator} looks like a typical Schr\"odinger
operator with a spectral parameter (the price to pay for this is that
this operator is multivalued with respect to the coordinate $x$).

As shown in \cite{FF:sol}, the operators \eqref{new operator} coincide
with the Schr\"odinger operators in \cite{BLZ} (formula
(1))\footnote{Note that what we denote by $\ell$ here coincides with
  $\ell$ of \cite{BLZ} but was denoted by $\wt{\ell}$ in
  \cite{FF:sol}, whereas what we denote by $r$ here was denoted by
  $\ell$ in \cite{FF:sol}.} parametrizing the spectra of the quantum
KdV Hamiltonians on the irreducible module over the Virasoro algebra
with highest weight $\Delta(\al,\ell)$ and central charge
$$
c(\al) = c_k = 1 - 6 \frac{\al^2}{\al+1}
$$
Moreover, as explained above, our condition \eqref{third order again}
means that the operator \eqref{new operator} has no monodromy around
the points $z_j$ for all $E$. This condition is equivalent to the
algebraic equations given by formula (3) in \cite{BLZ}.

In particular, the oper associated to the highest weight vector
(corresponding to $m=0$) is given by the formula
$$
\pa_x^2 - \frac{\ell(\ell+1)}{x^2} -
x^{2\al} + E,
$$
or in the matrix form
$$
\pa_x + \begin{pmatrix} \dfrac{\ell}{x} &  x^{2\al} - E \\
  1 & -\dfrac{\ell}{x} \end{pmatrix}.
$$

We note that for generic $\ell$ and $k$ the above irreducible module
over the Virasoro algebra is isomorphic to the Verma module and the
Fock representation $\pi_\mu$ with $\mu = (2\ell+1)(\al+1)/4$.

\subsection{The case of $\wh{\sw}_r$}    \label{slr}

Next, we consider the case of $\ghat=\wh{\sw}_r$. In this case the
Langlands dual Lie algebra $^L\ghat$ is also $\wh\sw_r$. We consider
$\wh\sw_r$-opers on $\pone$ which are the gauge equivalence classes of
differential operators with values in $\C {\mb d} \ltimes \sw_r
\pparl$, where ${\mb d} = \la \pa_\la$, of the form
$$
\partial_z + \left( \begin{array}{ccccc}
* & * & * &\cdots& * + \la \\
1&*&*&\cdots&*\\
0&1&*&\cdots&*\\
\vdots&\ddots&\ddots&\cdots&\vdots\\
0&0&\cdots&1&*
\end{array}\right) + \frac{k}{z} {\mb d}
$$
(where each $*$ stands for a rational function on $\pone$) under the
action of the group of $N_+$-valued rational functions on $\pone$
where $N_+$ is the upper unipotent subgroup of $SL_r$.

Each gauge equivalence class contains a unique operator of the form
\begin{equation}    \label{sln-oper1}
\partial_z + \left( \begin{array}{ccccc}
0&v_1(z)&v_2(z)&\cdots&v_{r-1}(z) + \la\\
1&0&0&\cdots&0\\
0&1&0&\cdots&0\\
\vdots&\ddots&\ddots&\cdots&\vdots\\
0&0&\cdots&1&0
\end{array}\right) + \frac{k}{z} {\mb d}.
\end{equation}
After the gauge transformation by $z^{k{\mb d}}$, we can eliminate the last
term at the cost of multiplying $\la$ by $z^k$:
\begin{equation}    \label{sln-oper2}
\partial_z + \left( \begin{array}{ccccc}
0&v_1(z)&v_2(z)&\cdots&v_{r-1}(z) + z^k \la\\
1&0&0&\cdots&0\\
0&1&0&\cdots&0\\
\vdots&\ddots&\ddots&\cdots&\vdots\\
0&0&\cdots&1&0
\end{array}\right) .
\end{equation}
The last operator is equivalent to the $r$th order scalar differential
operator with spectral parameter:
\begin{equation}    \label{nth order oper}
\pa_z^r - v_1(z) \pa_z^{r-2} + \ldots + (-1)^r v_{r-2}(z) \pa_z - (-1)^r
v_{r-1}(z) - (-1)^r z^k \la,
\end{equation}
acting from $\Omega^{-(r-1)/2}$ to $\Omega^{(r+1)/2}$. This determines
the transformation properties of these operators under the changes of
coordinate $z$; in particular, this ensures that the property that
their principal symbol is $1$ and the subprincipal symbol is $0$ is
preserved by the changes of coordinate.

As conjectured in \cite{FF:sol}, the operators \eqref{nth order oper}
that are relevant to the spectra of the quantum $\wh\sw_r$-KdV
Hamiltonians are those in which the functions $v_i(z)$ is a rational
function in $z$ with poles at $z=0, \infty$, and finitely many other
points $w_j, j=1,\ldots,m$. They satisfy the following properties:

\medskip

\noindent (1) At $z=\infty$, the operator \eqref{nth order oper}
has the mildest possible irregular singularity; namely, we have
\begin{align*}
\wt{v}_i(s) &\sim \frac{\wt{c}_i}{s^{i+1}} + \ldots, \qquad
r=1,\ldots,r-2; \\
\wt{v}_{r-1}(s) &\sim \frac{(-1)^r}{s^{r+1}} + \ldots,
\end{align*}
where $\wt{v}_i(s)$ are the coefficients of the operator obtained from
\eqref{nth order oper} by the change of variables $z \mapsto s =
z^{-1}$. By rescaling the coordinate $z$ we can make the leading
coefficient of $\wt{v}_{r-1}(s)$ to be equal to any non-zero number;
we choose that number to be $(-1)^r$.

\smallskip

\noindent (2) At $z=0$, the operator \eqref{nth order oper} has
regular singularity, that is
$$
v_i(z) \sim \frac{c_i(\nu)}{z^{i+1}} + \ldots,
$$
where the coefficients $c_i(\nu)$ are determined by the highest weight
$\nu$ of the $\wh\sw_r$-module $L_{\nu,k}$. Namely, representing $\nu$
as $(\nu_1,\ldots,\nu_r)$, where $\nu_i \in \C$ and $\sum_{i=1}^r
\nu_i = 0$, we find the $c_i(\nu)$'s from the following formula:
\begin{equation}    \label{zero}
\pa_z^r + \sum_{i=1}^{r-1} (-1)^i \frac{c_i(\nu)}{z^{i+1}}
\pa_z^{r-i-1} = \left(\pa_z - \frac{\nu_1}{z} \right) \ldots
\left(\pa_z - \frac{\nu_r}{z} \right).
\end{equation}

\smallskip

\noindent (3) At the points $w_j$, the operator \eqref{nth order oper}
has regular singularity,
\begin{equation*}
v_i(z) \sim \frac{c_i(\theta)}{(z-w_j)^{i+1}} + \ldots,
\end{equation*}
where $\theta = (1,0\ldots,0,-1)$ is the maximal root of $\sw_r$,
which is the highest weight of its adjoint representation. In
addition, we require that the operator \eqref{nth order oper} has {\em
  trivial monodromy} around the point $w_j$ for each $j=1,\ldots,m$,
and all $\la$.

\medskip

The proposal of \cite{FF:sol} is that the $r$th order differential
operators of this kind should correspond to the common eigenvalues of
the quantum $\wh\sw_r$-KdV Hamiltonians on the subspace of
$L_0$-degree $m$ in the generic irreducible module over the ${\mc
  W}$-algebra obtained by the quantum Drinfeld--Sokolov reduction
(with respect to $\n_-\ppart$) of the irreducible $\wh\sw_r$-module
$L_{\nu,k}$ of generic highest weight $\nu \in \h^*$ and level
$k$. The central charge of this module is
$$
c_k = r-1 - r(r^2-1)\frac{(k+r-1)^2}{k+r}.
$$
Note that for generic $\nu$ and $k$ this irreducible module over the
${\mc W}$-algebra is isomorphic to the Verma module and the Fock
representation $\pi_\mu$ with appropriate $\mu \in \h^*$.

In order to obtain a stand-alone spectral parameter, we apply the
change of variables $z \mapsto x$, where
$$
z = \frac{x^{r\al+r}}{(r\al+r)^r}
$$
and
$$
\al = -\frac{k+r-1}{k+r}.
$$
The dependence of the central charge on $\al$ is
$$
c(\al) = c_k = r-1 - r(r^2-1)\frac{\al^2}{\al+1}.
$$

It is easy to see that under this change of variables the irregular
term $z^{-r+1}$ in the last summand $v_{r-1}$ of our differential
operator becomes the term $x^{r\al}$, and the term $\la x^k$ gives
rise to the new spectral parameter term $-E$, where
$$
E = - (r\al+r)^{\frac{r\al}{\al+1}} \la,
$$
which is independent of $x$ (the sign is just a matter of
normalization). The poles of the new operator will be at the points
$x=0,\infty$ and $x^{r\al+r} = z_j, j=1,\ldots,m$, where
$$
z_j = (r\al+r)^r w_j.
$$

In particular, the $\wh{\sw}_r$-oper corresponding to the highest
weight vector may be written in the following way:
\begin{equation}    \label{special hw}
\pa_x + \ol{p}_{-1} + \frac{\nu}{x} + (x^{r\alpha} - E) e_\theta,
\end{equation}
where $\ol{p}_{-1} = \ds \sum_{i=1}^{r-1} f_i$, the sum of the
generators of the lower nilpotent subalgebra of $\sw_r$ ($f_i$ is the
matrix having 1 in the $i$th place below the diagonal, and 0
everywhere else), and $e_\theta$ is a generator of the maximal root
subspace (the matrix having 1 in the upper right corner and 0
everywhere else).

If we re-write these opers as $r$th order differential operators, we
obtain the differential operators of \cite{BHK,DDT,Dorey1}. They
correspond to the highest weight vectors of the representations of
${\mc W}$-algebras.

\subsection{General simply-laced case}    \label{slcase}

Let $\g$ be a simply-laced simple Lie algebra (that is, of $ADE$
type). Then $^L\g = \g$ and $^L\ghat = \ghat$. If $\g$ is of classical
type ($A$ or $D$), then we can realize the corresponding affine opers
as scalar (pseudo)differential operators, following \cite{DS}. It is
therefore possible to describe those of them that encode the spectra
of the quantum Hamiltonians of the $\ghat$-KdV system in a way that is
similar to the case of $\wh\sw_r$ (see the Section \ref{slr}). And in
fact, in the special case that there are no singular points other than
$0$ and $\infty$ (this is the case of the highest weight vector),
after a change of variables one obtains the (pseudo)differential
operators proposed in \cite{BHK,DDT,Dorey1} (see \cite{Dorey2} for a
survey). We can generalize these operators by including extra singular
points $w_j$ on $\pone$, as in the case of $\wh\sw_r$ (see
above).

However, in order to describe these affine opers for an arbitrary
affine Kac--Moody algebra $\ghat$ (including the case of non-simply
laced $\ghat$ dicussed below), we need to define $\ghat$-affine opers
as gauge equivalence classes of first order $\ghat$-valued
differential operators (as in Section 4.1 of \cite{FF:sol}).

Recall a basis $\{ \ol{p}_1,\ldots,\ol{p}_n \}$ of the canonical slice
$V_{\can} \subset \n_+ \subset \g$, as introduced in Section
\ref{untw} and Lemma \ref{free}. We will also use the elements
$\ol{p}_{-1} = \sum_{i=1}^n f_i$ and $f_0 = e_{\theta} \la$ of
$\ghat$. These are the same elements as those introduced in
\ref{twisted aff} except that now we use the variable $\la$ instead of
$z$ ($z$ is reserved for the spectral parameter of the KdV system and
appears here as the coordinate on $\pone$ on which the affine opers
are defined), and we take the completion in Laurent power series in
$\la$ rather than in $z^{-1}$ (the reason for this is explained in
Section 4.1 of \cite{FF:sol}).

The $\ghat$-opers (or equivalently, $^L \ghat$-opers) that parametrize
the common eigenvalues of the $\ghat$-KdV Hamiltonians are $\C {\mb d}
\ltimes \ghat$-valued differential operators of the form
\begin{equation} \label{oper spectra}
\pa_z + \ol{p}_{-1} + (z^{-h^\vee+1} + \la) e_{\theta} +
\sum_{i=1}^n v_i(z) \cdot \ol{p}_i + \frac{k}{z} {\mb d},
\end{equation}
where each $v_i(z)$ is a rational function on $\pone$ with poles at
$z=0, \infty$, and finitely many other points $w_j, j=1,\ldots,m$.

Here we denote by $h^\vee$ the {\em dual Coxeter number} of $\ghat$,
which coincides with the Coxeter number of $\ghat$ if $\g$ is
simply-laced.

As before, applying the gauge transformation by $z^{k{\mb d}}$ to the
operator \eqref{oper spectra}, we eliminate the last term at the cost
of multiplying $\la$ by $z^k$:
\begin{equation}    \label{oper spectra1}
\pa_z + \ol{p}_{-1} + (z^{-h^\vee+1} + z^k \la) e_{\theta} +
\sum_{i=1}^n v_i(z) \cdot \ol{p}_i.
\end{equation}

Recall that the quantum $\ghat$-KdV Hamiltonians act on the
irreducible module over the ${\mc W}$-algebra associated to $\g$
obtained by the quantum Drinfeld--Sokolov reduction (with respect to
$\n_-\ppart$) \cite{FF:lmp,FF:ds} from the irreducible module
$L_{\nu,k}$ over $\ghat$ with highest weight $\nu$ and level $k$ (such
a module is isomorphic to the Fock representation $\pi_\mu$ with
appropriate $\mu \in \h^*$). The proposal of \cite{FF:sol} is that for
generic $\nu$ and $k$, the common eigenvalues of these Hamiltonians
are encoded by the $\ghat$-opers (equivalently, $^L \ghat$-opers)
\eqref{oper spectra1} satisfying the following properties:

\medskip

\noindent (1) At $z=\infty$, the operator \eqref{oper spectra1}
has the mildest possible irregular singularity. The terms $v_i(z)$ are
regular:
\begin{align*}
\wt{v}_i(s) &\sim \frac{\wt{c}_i}{s^{d_i+1}} + \ldots, \qquad
i=1,\ldots,\ell,
\end{align*}
where $\wt{v}_i(s)$ are the coefficients of the operator obtained from
\eqref{oper spectra1} by the change of variables $z \mapsto s =
z^{-1}$. But the term $z^{-h^\vee+1} e_{\theta}$ creates an
irregular singularity term $(-1)^{h^\vee} s^{-h^\vee-1}
e_{\theta}$.

\smallskip

\noindent (2) At $z=0$, the operator \eqref{oper spectra1} has
regular singularity, that is
$$
v_i(z) \sim \frac{c_i(\nu)}{z^{d_i+1}} + \ldots,
$$
where $\nu \in \h^* \simeq \h$ and the $c_i(\nu)$ are determined by
the following rule: the element $$\ol{p}_{-1} + \sum_{i=1}^n \left(
  c_i(\nu) + \frac{1}{4} \delta_{i,1} \right) \ol{p}_i$$ is the unique
element in the Kostant slice of regular elements of $\g$,
$$
\ol{p}_{-1} + {\mb v}, \qquad {\mb v} \in {}\bb,
$$
which is conjugate to $\ol{p}_{-1} - \nu$ (see \cite{F:book},
Section 9.1).

Here we use the identification of $\h^*$ and $\h$ (so $\nu \in \h^*$
becomes an element of $\h \subset \bb$) provided by the restriction of
the invariant inner product on $\g$ normalized so that the square
length of each root is equal to $2$.

\smallskip

\noindent (3) At the points $w_j$, the operator \eqref{oper spectra1}
has regular singularity, and for all $i=1,\ldots,\ell$,
\begin{equation*}
  v_i(z) \sim \frac{c_i(\theta)}{(z-w_j)^{d_i+1}} + \ldots,
\end{equation*}
where $\theta$ is the maximal root of $\g$. In addition, we require
{\em trivial monodromy} around the point $w_j$ for each
$j=1,\ldots,m$, and all $\la$.

\medskip

As before, the number $m$ should correspond to the $L_0$-degree of the
corresponding eigenvector.

\subsection{Change of variables: general case}    \label{change
  var}

Next, we make a change of variables so as to make the spectral
parameter appear independently of the coordinate:
\begin{equation}    \label{change}
z = \frac{x^{h^\vee(\al+1)}}{(h^\vee(\al+1))^{h^\vee}}
\end{equation}
where
$$
\al = -\frac{k+h^\vee-1}{k+h^\vee}.
$$
When we make this change, we find that $\ol{p}_{-1}$, $\ol{p}_i$, and
$e_{\theta}$ all get multiplied by
$$
\frac{dz}{dx} = \frac{x^{h^\vee(\al+1)-1}}{(h^\vee(\al+1))^{h^\vee-1}}.
$$
In order to bring the operator to the oper form $\pa_x + \ol{p}_{-1} +
\ldots$, we then need to apply a gauge transformation by an element of
the Cartan subgroup $H$.

Recall the weight $\rho \in \h^*$, the half-sum of positive roots. We
have
$$
\langle \rho,\al_i^\vee \rangle = 1, \qquad i=1,\ldots,n.
$$
Here we view $\rho$ as a coweight of $\h$ identified with $\h^*$ as
above, via a normalized invariant inner product. Therefore, $\rho$
gives rise to a homomorphism (one-parameter subgroup) $\C^\times \to
{}H$, the Cartan subgroup of the simply-connected Lie group $G$ with
the Lie algebra $\g$. We will denote this one-parameter subgroup by
$\rho$. It acts as follows:
$$
\rho(a) \cdot \ol{p}_{-1} = a^{-1} \ol{p}_{-1}, \qquad \rho(a) \cdot
\ol{p}_i = a^{d_i} \ol{p}_i, \quad i=1,\ldots,n,
$$
and
$$
\rho(a) \cdot e_{\theta} = a^{h^\vee-1} e_{\theta}.
$$

Let us apply the gauge transformation by
$$
\frac{\rho(x)^{h^\vee(\al+1)-1}}{\rho(h^\vee(\al+1))^{h^\vee-1}}.
$$
As the result we get back $\ol{p}_{-1}$, and the coefficient
$z^k\lambda e_{\theta}$ becomes $-E e_{\theta}$, where
$$
E = - (h^\vee(\al+1))^{h^\vee\frac{\al}{\al+1}} \la,
$$
which is a pure spectral parameter, independent of $x$ (due to the
identity $kh^\vee(\al+1)+h^\vee(h^\vee(\al+1)-1) = 0$). In addition,
the term $z^{-h^\vee+1} e_{\theta}$ becomes $x^{h^\vee \al}$ (that
was the reason for including the coefficient
$(h^\vee(\al+1))^{-h^\vee}$ in the coordinate change from $z$ to $x$).

The singularities at $z=w_j$ become singularities at $x^{h^\vee(\al+1)}
= z_j$, where
$$
z_j = (h^\vee(\al+1))^{h^\vee} w_j.
$$

The oper \eqref{oper spectra1} now takes the form
\begin{equation}    \label{oper spectra gen}
\pa_z + \ol{p}_{-1} + (x^{h^\vee \al} - E) e_{\theta} +
\sum_{i=1}^n \ol{v}_i(x) \cdot \ol{p}_i,
\end{equation}
where
\begin{align*}
\ol{v}_1(x) &= v_1(\varphi(x))(\varphi')^2 - \frac{1}{2} \{ \varphi,x
\}, \\
\ol{v}_i(x) &= v_i(\varphi(x))(\varphi')^{d_i+1}, \qquad i>1,
\end{align*}
and $\varphi(x)$ is the function on the right hand side of formula
\eqref{change} (see \cite{F:book}, Section 4.2.4).

In the case $m=0$, the $^L\ghat$-oper corresponding to the highest
weight vector may be written in the form
\begin{equation}    \label{special hw gen}
\pa_x + \ol{p}_{-1} + \frac{\nu}{x} +  (x^{h^\vee \al} - E) e_{\theta},
\end{equation}
where $\nu \in \h^* = {}^L\h$.

\subsection{The non-simply laced case}    \label{nslcase}

Let us recall that to each affine Dynkin diagram we can associate an
affine Kac--Moody algebra. We are interested in its quotient by the
central element. In the untwisted case, this is the semi-direct
product of ${\mb d} = \la \pa_\la$ and the universal central extension
of $\g\pparl$, where $\g$ is a simple Lie algebra. In the twisted
case, it is the semi-direct product of ${\mb d} = \la \pa_\la$ and the
universal central extension of the twisted loop algebra $L_\sigma\gt$,
defined as in Section \ref{twisted aff}, except that the loop variable
is now denoted by $\lambda$ rather than $z$, and we take the
completion in formal power series in $\lambda$ rather than in
$z^{-1}$.

Given an affine Kac--Moody algebra $\ghat$ associated to a Dynkin
diagram $\wh\Gamma$, we define its Langlands dual as the affine
Kac--Moody algebra $^L\ghat$ associated to the dual Dynkin diagram $^L
\wh\Gamma$ which is obtained from $\wh\Gamma$ by reversing all
arrows. The dual of $^L\ghat$ is of course $\ghat$ itself.

Note that if we remove the 0th nodes from the Dynkin diagrams
$\wh\Gamma$ and $^L \wh\Gamma$, we obtain the Dynkin diagrams of two
simple Lie algebras that are Langlands dual to each other. These are
the degree zero Lie subalgebras of $\ghat$ and $^L\ghat$ with respect
to the grading defined by $d$. We denote them by $\g$ and $^L\g$.

If $\ghat$ is the untwisted Kac--Moody algebra associated to a
simply-laced simple Lie algebra $\g$, then $^L\ghat = \ghat$. This is
the case we have just discussed. However, if $\ghat$ is the untwisted
Kac--Moody algebra associated to a non-simply laced simple Lie algebra
$\g$, then $^L\ghat$ is a twisted affine Kac--Moody algebra. In this
case the Dynkin diagram of $^L\g$ is the quotient of the Dynkin
diagram of a simply-laced Lie algebra $\gt$ by an automorphism of
order $r^\vee=2$ or $3$, and $^L\ghat$ is the corresponding twisted
affine algebra of type $\gt^{(r^\vee)}$ (see Section \ref{twisted
  aff}). Note that in this case $^L\g = \gt_0$, the $\sigma$-invariant
part of $\gt$, where $\sigma$ is an outer automorphism of $\gt$ of
order $r^\vee$ corresponding to the automorphism of the Dynkin diagram
of $\gt$.

Here we consider the case that $\g$ is non-simply laced, and so
$^L\ghat$ is a twisted affine Kac--Moody algebra. The conjecture of
\cite{FF:sol} is that the joint eigenvalues of the quantum $\ghat$-KdV
Hamiltonians are encoded by the $^L \ghat$-affine opers, and so the
relevant finite-dimensional Lie algebra is $^L\g$. Hence we consider
the basis $\{ \ol{p}_1,\ldots,\ol{p}_n \}$ of the canonical slice
$V_{\can}$ of $^L\g$ rather than $\g$ (see Section \ref{untw} and
Lemma \ref{free}). We will also use the elements $\ol{p}_{-1} =
\sum_{i=1}^n f_i$ and $f_0$ of $^L\ghat$. The latter is now equal to
$f_0 = e_{\theta_0} \la$, where $e_{\theta_0}$ is a highest weight
vector of the $^L\g$-module $\gt_1$, the eigenspace of $\sigma$ with
the eigenvalue $e^{2\pi i/r^\vee}$ (see Section \ref{twisted aff}).

It is natural to generalize formula \eqref{oper spectra} to this case
as follows:
\begin{equation} \label{oper spectra-ns}
\pa_z + \ol{p}_{-1} + (z^{-h^\vee+1} + \la) e_{\theta_0} +
{\mb v}(z) + \frac{k}{z} {\mb d}.
\end{equation}

However, as D. Masoero and A. Raimondo pointed out to us, this formula
requires a further justification since the operator \eqref{oper
  spectra-ns} does not, on the face of it, take values in $^L\ghat$
if $\ghat$ is non-simply laced. Indeed, according to the definition of
the twisted affine Kac--Moody algebras (see Section \ref{twisted
  aff}), the $\la^0$-part of an element of $^L\ghat$ should be in
$^L\g = \gt_0$, and the $\la^1$-part should be in $\gt_1$. But in
formula \eqref{oper spectra-ns}, we have the $\la^0$-term
$z^{-h^\vee+1} e_{\theta_0}$, which is in $\gt_1$.

In order to make sense of formula \eqref{oper spectra-ns} in the
non-simply laced case, we recall the notion of twisted opers
introduced by B. Gross and one of the authors in \cite{FG}, Section
5.1.

The idea is to make the automorphism $\sigma$ act on both the Lie
algebra $\gt$ and the space on which our differential operators are
defined; that is, the projective line with the coordinate $z$. We will
view this $\pone = \pone_z$ as an $r^\vee$-sheeted cover of another
projective line $\pone_t$ with the coordinate $t$ such that
$z^{r^\vee} = t$. We define an automorphism $\wt\sigma$ of $\pone_z
\times \gt$ by the formula
$$
(z,g) \mapsto (ze^{-2\pi i/r^\vee},\sigma(g)).
$$
We then have a natural notion of a $\sigma$-twisted connection on
$\pone_t$: namely, a $\wt\sigma$-invariant (meromorphic) connection on
the trivial $G'$-bundle on $\pone_z$ (it is automatically flat, since
$\pone_z$ is one-dimensional as a complex manifold, so the flatness
condition is vacuous),
\begin{equation}    \label{tw conn}
\nabla = d + A(z) dz,
\end{equation}
where $A(z) dz$ is a $\wt\sigma$-invariant $\gt$-valued one-form on
$\pone_z$ (and $d$ is the de Rham differential on $\pone_z$).

Furthermore, in \cite{FG} the notion of a twisted oper was introduced,
as a $\wt\sigma$-invariant connection \eqref{tw conn} satisfying a
natural generalization of the oper condition. Here's an example of a
twisted oper constructed in \cite{FG} (we use the notation of the
present paper):
\begin{equation}    \label{tw oper}
d + (\ol{p}_{-1} + z e_{\theta_0}) \frac{dz}{z}.
\end{equation}
It is clear that the restriction of this operator to the formal disc
around the point $z=0$ can be viewed as an element of the dual space
to the twisted affine Kac--Moody algebra $^L\ghat$ (with respect to
the coordinate $z$).

The difference between the twisted opers introduced in \cite{FG}, such
as \eqref{tw oper}, and the $^L\ghat$-affine opers that we need here
is that we have to incorporate the additional (formal) variable
$\la$. We do that as follows.

Given $\gt$ and $^L\ghat$ as above, we define a $^L\ghat$-affine
oper on $\pone_t$ (where, as above, $t=z^{1/r^\vee}$) as an operator
\begin{equation}    \label{double tw conn}
\nabla = d + A(z,\la) dz, \qquad A(z,\la) \in \left( \gt \otimes
  \C(z)\pparl \right) \oplus \left( {\mb d} \otimes \C(z) \right)
\end{equation}
(note that we denote the element of the affine algebra $\la \pa_\la$
by ${\mb d}$ to avoid any confusion with the de Rham differential
$d$), which are invariant under the action of the automorphism
$\wt{\wt\sigma}$:
$$
(z,\la,g) \mapsto (ze^{-2\pi i/r^\vee},\la e^{-2\pi
  i/r^\vee},\sigma(g)), \qquad g \in \gt.
$$
Thus, in our definition of $^L\ghat$-affine opers, the automorphism is
given not only as $\sigma$ on $\gt$ and the map $\la \mapsto \la
e^{-2\pi i/r^\vee}$, but also the map $z \mapsto z e^{-2\pi i/r^\vee}$
acting on the coordinate $z$ of $\pone_z$, on which the oper is
defined.

Now we interpret our operator \eqref{oper spectra-ns} as a
$^L\ghat$-affine oper in the sense of this definition. Let us recall
the element $\rho \in \h^*$ defined in Section \ref{slcase}. Since we
have a canonical isomorphism $\h^* \simeq {}^L\h$, we view $\rho$ as
an element of $^L\h$. Therefore, $\rho$ gives rise to a homomorphism
$\C^\times \to {}^LH$, the Cartan subgroup of the simply-connected Lie
group $^LG$ with the Lie algebra $^L\g$, and we have
$$
\rho(a) \cdot \ol{p}_{-1} = a^{-1} \ol{p}_{-1}, \qquad
\rho(a) \cdot e_{\theta_0} = a^{h^\vee-1} e_{\theta_0}.
$$
Applying the gauge transformation by $\rho(z)$ to the operator
\eqref{oper spectra-ns} (and multiplying it by $dz$ in order to make
it into a connection operator of the form \eqref{double tw conn}), we
obtain
\begin{equation}    \label{oper spectra-ns1}
d + \left( \ol{p}_{-1} + (z + \la z^{h^\vee}) e_{\theta_0} +
\ol{\mb v}(z) \right) \frac{dz}{z} + k {\mb d} \; \frac{dz}{z},
\end{equation}
where $\ol{\mb v}(z) = z \rho(z) \cdot {\mb v}(z) -
\rho$. Finally, we apply the gauge transformation by
$z^{h^\vee {\mb d}}$ to bring it to the following form:
\begin{equation}    \label{oper spectra-ns2}
d + \left( \ol{p}_{-1} + (z + \la) e_{\theta_0} +
\ol{\mb v}(z) \right) \frac{dz}{z} + (k+h^\vee) {\mb d} \;
\frac{dz}{z}
\end{equation}
Note that the resulting shift of $k$ by the dual Coxeter number
$h^\vee$ is akin to the shift of ${\mb v}(z)$ by $-\rho$; in a sense,
it properly centers the weights around the affine version of $\rho$
(note that the shift of ${\mb v}(z)$ is here by $-\rho$, rather than
$\rho$, because of our convention that the generator $f_0$ is
$e_{\theta_0} \la$, rather than $e_{\theta_0} \la^{-1}$ which leads to
the multiplication of the weights by $-1$, so that the ``central''
weight is $\rho$ rather than the more traditional $-\rho$).

The connection \eqref{oper spectra-ns2} is $\wt{\wt\sigma}$-invariant,
and is indeed a $^L\ghat$-oper, provided that
\begin{equation}    \label{v inv}
\ol{\mb v}(z) = {\mb v}(ze^{-2\pi i/r^\vee})
\end{equation}
(in other words, $\ol{\mb v}(z)$ really depends on $t=z^{r^\vee}$).

Note that formula \eqref{oper spectra-ns2} makes perfect sense for
{\em any} affine Kac--Moody algebra $^L\ghat$ and can be taken as an
alternative definition of the $^L\ghat$-opers in the simply-laced case
(in this case, of course, $\sigma$ is the identity, $r^\vee=1$, $t=z$,
and we set $\theta_0=\theta$).

As before, we can eliminate the last term in \eqref{oper spectra-ns2}
by applying the gauge transformation by $z^{(k+h^\vee) {\mb d}}$. Then
we obtain
\begin{equation}    \label{oper spectra-ns3}
d + \left( \ol{p}_{-1} + (z + \la z^{k+h^\vee}) e_{\theta_0} +
\ol{\mb v}(z) \right) \frac{dz}{z}.
\end{equation}

By using an $^LN$-valued gauge transformation, we can bring $\ol{\mb
  v}(z)$ to the form
$$
\ol{\mb v}(z) = \sum_{i=1}^\ell \ol{v}_i(z) \ol{p}_i.
$$

Now we list the conditions that the $^L\ghat$-opers \eqref{oper
  spectra-ns3} should satisfy in order to encode joint eigenvalues of
the quantum KdV Hamiltonians. Note that these conditions are slightly
different from (and simpler than) the conditions listed in Section
\ref{slcase}. This is in part because in formula \eqref{oper
  spectra-ns3} we have the overall factor $1/z$ that creates a more
convenient ``gauge'' for our opers.

\medskip

\noindent (0) $\ol{\mb v}(z)$ should satisfy \eqref{v inv}, and hence
each $\ol{v}_i(z)$ should satisfy the same invariance property.

\smallskip

\noindent (1) At $z=\infty$ (if we set $\la=0$), the operator
\eqref{oper spectra1} should have the mildest possible irregular
singularity. The terms $\ol{v}_i(z)$ are regular:
\begin{equation*}
\wt{\ol{v}}_i(s) \sim - \ol{c}_i(\nu+\rho) + \ldots, \qquad
i=1,\ldots,\ell,
\end{equation*}
but the term $e_{\theta_0} dz$ creates an irregular singularity term
$-s^{-2} e_{\theta_0} ds$ (here, as above, $s=z^{-1}$).

\smallskip

\noindent (2) At $z=0$, the operator \eqref{oper spectra1} should have
regular singularity, that is
$$
\ol{v}_i(z) \sim \ol{c}_i(\nu+\rho) + \ldots,
$$
for some $\nu \in \h^* = {}^L\h$, where $\ol{c}_i(\mu)$ is
determined by the following rule:

\noindent The element
$$
\ol{p}_{-1} + \sum_{i=1}^\ell \ol{c}_i(\mu) \ol{p}_i
$$
is the unique element in the Kostant slice of regular elements of
$\g$, which is conjugate to $\ol{p}_{-1} - \mu$ (see \cite{F:book},
Section 9.1).

\smallskip

\noindent (3) The $\ol{v}_i(z)$ are allowed to have regular
singularities at $m \cdot r^\vee$ points on $\C^\times \subset
\pone_z$,
$$
w^{(p)}_j = w_j e^{2\pi i
  p/r^\vee}, \qquad j=1,\ldots,m, \quad p=0,1,r^\vee-1,
$$
so that we have the following expansions in $z-w^{(p)}_j$:
\begin{equation*}
  \ol{v}_i(z) \sim \ol{c}_i(\theta_0) +
  \ldots
\end{equation*}
In addition, the connections \eqref{oper spectra-ns2} and \eqref{oper
  spectra-ns3} should have {\em trivial monodromy} around each of the
points $w^{(p)}_j$ for all $\la$.

\medskip

Thus, we see that the set of singular points (other than $0$ and
$\infty$) forms a union of $r^\vee$ families -- orbits of the cyclic
group $\Z_{r^\vee}$ naturally acting on $\pone_z$; that's because we
need $\ol{\mb v}(z)$ to be invariant under the action of this
group. As before, the number $m$ should correspond to the $L_0$-degree
of the corresponding eigenvector of the $\ghat$-KdV Hamiltonians.

\bigskip

We can make a change of variables \eqref{change} so as to make the
spectral parameter appear independently of the coordinate. All
calculations of Section \ref{change var} apply in the same way as in
the simply-laced case.

\subsection{$Q\wt{Q}$-system from affine opers}    \label{QQ tw aff}

Let $\ghat$ be an untwisted quantum affine algebra (simply-laced or
non-simply laced). In the papers
\cite{MRV1,MRV2} a solution of the $Q\wt{Q}$-system \eqref{Qsyst} is
assigned to the affine $^L\ghat$-oper \eqref{special hw gen} for
generic $\nu \in \h^*$ (note that our $\al$ corresponds to $M$ in
\cite{MRV1,MRV2}, and our $q$ corresponds to $\Omega^{-1/2}$ in
\cite{MRV1,MRV2}). This construction generalizes earlier results
\cite{DT,BLZ4,DT1,DDT,BHK,S}.

More precisely, for an untwisted affine Kac--Moody algebra $\ghat$,
the authors of \cite{MRV1,MRV2} define $n$ evaluation representations
$V^{(i)}, i=1,\ldots,n$, of the affine algebra $^L\ghat$ (which is
twisted if $\g$ is not simply-laced) and study the first order linear
differential equations obtained from the operator \eqref{special hw
  gen} specialized in the representations $V^{(i)}$. They show that
each of these equations has a unique (properly normalized) solution
that goes to 0 most rapidly as $x \to +\infty$. Then they define ${\mb
  Q}_i$ and $\wt{\mb Q}_i$ as the leading coefficients appearing in
the expansion of the above solutions near $x=0$ (these functions may
be viewed as generalizations of the spectral determinants). Finally,
they show that for generic $\nu \in \h^*$ these functions are entire
functions of $E$ which satisfy the $Q\wt{Q}$-system \eqref{Qsyst} (with
$E$ instead of $u$ and particular values of $v_i, i=1,\ldots,n$). They
obtain this system from a system of equations satisfied by the above
solutions (which they call the $\Psi$-system).

Now let us consider the case that $\ghat$ is a twisted affine
algebra. In this case $^L\ghat$ is untwisted (unless $\ghat =
A_{2n}^{(2)}$, in which case $^L\ghat = A_{2n}^{(2)}$ as well). Note
that this case was not considered in \cite{MRV1,MRV2}. However, it is
natural to expect that using the construction of \cite{MRV1,MRV2}, one
can attach to the $^L\ghat$-affine oper \eqref{special hw gen} a
solution of the $Q\wt{Q}$-system \eqref{QQ tw} associated to $\ghat$
from Section \ref{QQ twisted}. This leads us to the following
conjecture.

\begin{conj} \label{tw aff op} Let $\ghat$ be a twisted affine
  algebra. Then to any $^L\ghat$-affine oper \eqref{special hw gen}
  with generic $\nu$ one can attach a solution of the $U_q(\ghat)$
  $Q\wt{Q}$-system \eqref{QQ tw}, with $\left
    [\pm\frac{\alpha_i}{2}\right]$ mapping to some $v_i^{\pm 1} \in
  \C^\times$ depending on $\al$ and $\nu$.
\end{conj}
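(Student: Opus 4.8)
The plan is to adapt the analytic construction of \cite{MRV1,MRV2} to the present situation, in which the roles of ``twisted'' and ``untwisted'' are exchanged: the oper \eqref{special hw gen} is now valued in $\C d \ltimes {}^L\ghat$ with $^L\ghat$ \emph{untwisted} (except for the self-dual case $\ghat = A_{2n}^{(2)}$, treated separately below), while the $Q\wt{Q}$-system \eqref{QQ tw} to be produced is the one for the \emph{twisted} algebra $U_q(\ghat)$. First I would choose, for each $i \in I_\sigma$, an evaluation representation $V^{(i)}$ of $^L\ghat$ and consider the first-order linear differential equation obtained by specializing the operator \eqref{special hw gen} in $V^{(i)}$. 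The choice of $V^{(i)}$ has to be pinned down so that, near $x = 0$, the two leading exponents of the reduced scalar equation correspond to the weights $\omega_i$ and $\omega_i - \al_i$ of $^L\g$, matching the highest $\ell$-weights of $L^+_{i,a}$ and $X_{i,a}$ on the representation-theoretic side; for the classical fundamental modules of the non-simply-laced $^L\g$ (types $B_n, C_n, F_4, G_2$) this amounts to checking that $\omega_i - \al_i$ occurs with multiplicity one among the weights adjacent to the highest weight, which can be verified type by type.

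Next I would carry out the standard asymptotic analysis at the irregular singularity $x = \infty$. The cyclic element $\ol{p}_{-1} + e_{\theta_0}$, which governs the leading behaviour after the change of variables \eqref{change}, is regular semisimple in the relevant graded sense, so Sibuya-type theory yields a unique, suitably normalized, subdominant solution $Y^{(i)}(x, E)$ decaying most rapidly as $x \to +\infty$; realizing $Y^{(i)}$ through a Volterra integral equation whose data depend polynomially on $E$ shows that $Y^{(i)}$ is entire in $E$, and for generic $\nu$ the exponents at $x = 0$ are distinct modulo $\Z$, so the Frobenius basis there is well defined. Expanding $Y^{(i)}$ near $x = 0$ in that basis, I would define ${\mb Q}_i(E)$ and $\wt{\mb Q}_i(E)$ as the coefficients of the two extreme terms (those attached to $\omega_i$ and $\omega_i - \al_i$), with the normalization constants appearing in $\left[\pm\tfrac{\alpha_i}{2}\right] \mapsto v_i^{\pm 1}$ read off from the connection data between $x = 0$ and the Stokes rays at $x = \infty$; these $v_i$ then depend on $\al$ and on $\nu$ through $\ol{p}_{-1} + \nu$, as required.

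The core of the argument is the derivation of a ``$\Psi$-system'' relating the $Y^{(i)}$ for different $i$, obtained from the homomorphisms of $^L\ghat$-modules that express a distinguished subquotient inside the tensor products of the $V^{(j)}$ with $j \sim i$. Evaluating these relations at $x \to 0$ turns them into algebraic relations among the ${\mb Q}_i$ and $\wt{\mb Q}_i$, and one must then check that, because $^L\g$ is non-simply-laced with exactly the orbit structure recorded in Section \ref{QQ twisted}, these relations coincide with \eqref{QQ tw}: the three cases $d_i \in \{r, 1, \tfrac12\}$ should match the three possible positions of a node relative to a $\sigma$-orbit, and the substitutions $a \mapsto a^r$ and $a \mapsto -a$ should come out of the monodromy of $x^{h^\vee\al}$ around $x = 0$ together with the orbit sizes.

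The step I expect to be the main obstacle is precisely this last matching: showing that the $\Psi$-system coming from the untwisted oper ``unfolds'' to the twisted system \eqref{QQ tw} with the correct spectral shifts and constants $v_i$, since the module homomorphisms for the non-simply-laced $^L\g$ are more delicate than in the simply-laced case and must be treated type by type. A secondary but genuine obstacle is the case $\ghat = {}^L\ghat = A_{2n}^{(2)}$: there the oper is valued in a twisted loop algebra, so one must first extend the construction of \cite{MRV1,MRV2} to twisted opers (not done there) before folding, and I would handle it using the twisted-loop-algebra realization already invoked in Section \ref{twisted aff}. Throughout, a useful consistency check is to compare the output with the ``folding'' of the simply-laced $Q\wt{Q}$-system for $\widehat{\gt}$ attached by \cite{MRV1,MRV2} to the $\widehat{\gt}$-oper, which should agree once the relevant affine opers are identified.
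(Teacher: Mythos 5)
The paper offers no proof of this statement: it is formulated and left as Conjecture \ref{tw aff op}, supported only by the remark that one ``naturally expects'' the construction of \cite{MRV1,MRV2} to extend to the case where $\ghat$ is twisted (so that $^L\ghat$ is untwisted, except for $A_{2n}^{(2)}$). Your proposal follows exactly that expected route, so there is no methodological divergence to compare; the issue is that what you have written is a programme, not a proof. The steps that would constitute the actual mathematical content are precisely the ones you defer as ``obstacles'': (i) the existence of the $^L\ghat$-module homomorphisms among tensor products of the evaluation modules $V^{(i)}$ that would yield a $\Psi$-system for the subdominant solutions $Y^{(i)}$ — in \cite{MRV1,MRV2} this is carried out only when $\ghat$ is untwisted, and the reversed situation (untwisted $^L\ghat$ with non-simply-laced $^L\g$, orbits indexed by $I_\sigma$) is not a formal consequence of their results; (ii) the identification of the resulting relations with the folded system \eqref{QQ tw}, including the case distinction $d_i\in\{r,1,\tfrac12\}$, the spectral substitutions $a\mapsto a^r$ and $a\mapsto -a$, and the explicit constants $v_i^{\pm 1}$, which you assert ``should come out'' of the monodromy of $x^{h^\vee\al}$ but do not derive; and (iii) the self-dual case $A_{2n}^{(2)}$, where even the input (asymptotic analysis and subdominant solutions for a twisted-loop-algebra-valued oper) is not available in the cited literature and would have to be built from scratch.

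To be fair, you are candid about where the difficulties lie, and your outline is consistent with what the authors themselves anticipate (including the consistency check against folding the simply-laced $Q\wt{Q}$-system of $\wh{\gt}$). But flagging the hard steps is not the same as resolving them: until the $\Psi$-system is actually constructed for these evaluation modules and matched term by term with \eqref{QQ tw}, the conjecture remains open, exactly as it does in the paper.
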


In this section, we have discussed the conjecture of \cite{FF:sol}
linking the eigenvectors (or, equivalently, the spectra) of the
$\ghat$-KdV Hamiltonians to $^L\ghat$-affine opers on $\pone$ with
special analytic behavior. Note that this conjecture was not based
on a direct construction, but rather on an analogy with the Gaudin
model (the fact that the spectra of the $\g$-Gaudin model can be
encoded by $^L\g$-opers on $\pone$ with special analytic behavior, see
\cite{FF:sol}, especially Sections 4.4, 5.4, and 5.5, for more
details).

However, if the results of \cite{MRV1,MRV2} (as well as Conjecture
\ref{tw aff op}) could be generalized to other $^L\ghat$-opers
discussed in this section, given by formulas \eqref{oper spectra1} and
\eqref{oper spectra-ns3}, that we expect to correspond to the excited
states in the highest weight representations of ${\mc W}$-algebra,
then, as we discussed in the Introduction, this would indeed open the
possibility of establishing a direct link between the spectra of
quantum $\ghat$-KdV Hamiltonians and $^L\ghat$-opers.

In \cite[Section 4]{FF:sol}, another quantum integrable system was
discussed: the ``shift of argument'' $\ghat$-Gaudin model, and its
spectrum was also conjectured to be encoded by $^L\ghat$-affine opers
on $\pone$. These opers differ from the $^L\ghat$-affine opers arising
in the quantum $\ghat$-KdV system in the way they behave near $\infty
\in \pone$. Nevertheless, we expect that solutions of the $U_q(\ghat)$
$Q\wt{Q}$-system can be attached to these $^L\ghat$-affine opers as
well. At the same time, the joint eigenvalues of the quantum
Hamiltonians of this $\ghat$-Gaudin model should satisfy the same
$Q\wt{Q}$-system for the same reason as in the quantum KdV
case. Therefore, we again expect that $Q\wt{Q}$-system would provide a
link between between these joint eigenvalues and the $^L\ghat$-affine
opers described in \cite[Section 4]{FF:sol}.

\subsection{Duality of affine opers}    \label{duality opers}

Let us recall that Conjecture \ref{commute} implies that there is a
correspondence between solutions of the $Q\wt{Q}$-systems (as well as
other equations stemming from $K_0({\mc O})$ such as the $QQ^*$-system
of \cite{HL}, see Section \ref{QQstar}) for $U_q(\ghat)$ and
$U_{\check{q}} {}^L\ghat$, where
$$
q = e^{\pi i \beta^2}, \qquad \check{q} = e^{\pi i \check{r}/\beta^2}.
$$
If that is true, then this should also hold on the side of affine
opers. This means that there should be a correspondence (or duality)
between the $^L\ghat$-opers of the form discussed in this section with
the parameter $\alpha$, and $\ghat$-opers of the same form but with
the parameter $\check\alpha$, where
$$
\check\al+1 = \frac{1}{\check{r}(\al+1)}
$$
(see formulas \eqref{betacheck} and \eqref{albeta}). For
$\ghat=\wh\sw_r$, this duality was discussed in \cite{DDT}.

\subsection{Two appearances of opers}    \label{last}

At first glance, it may appear that the conjecture of \cite{FF:sol}
discussed in this section is not so surprising: after all, the phase
space of the classical KdV system is the space of opers. Why should we
then be surprised that the spectra of the quantum KdV Hamiltonians
would be linked to opers as well? However, it is important to realize
that the two spaces of opers appearing here are quite different.

The phase space of the classical $\ghat$-KdV system is the space of
$\g$-opers (not $\ghat$-opers!) on a circle, or a punctured disc
(with coordinate $t$), see Section \ref{ckdv}. We inserted a
spectral parameter $z$ into these $\g$-opers in order to construct the
Poisson commuting KdV Hamiltonians.

On the other hand, the spectra of the corresponding algebra of quantum
$\ghat$-KdV Hamiltonians are conjecturally encoded by $^L\ghat$-affine
opers on the projective line $\pone$ with coordinate $z$ (it is the
same $z$ as the spectral parameter in the classical story). So opers
appear again, but these are {\em affine} opers, and they are
associated to the {\em Langlands dual} affine algebra
$^L\ghat$. Therefore, {\em a priori} they have nothing to do with the
$\g$-opers appearing in the definition of the classical KdV system
(other than the fact that a coordinate on the space on which the
$^L\ghat$-affine opers ``live'' is the spectral parameter of the
$\g$-opers).

For instance, in the case of $\sw_2$, the points of the phase space of
the KdV system are $\sw_2$-opers with spectral parameter (see Section
\ref{mon})
$$
\pa_t^2 - v(t) - z,
$$
where $t$ is a coordinate on a circle, or a punctured disc, and $z$ is
the spectral parameter. The classical KdV Hamiltonians are constructed
by expanding the monodromy matrix of this operator, considered as
function of $z$, near $z=0$ (non-local) or $z=\infty$ (local).

On the other hand, the $\su$-opers that encode the eigenvalues of the
quantum KdV Hamiltonians have the form (see Section \ref{sl2})
$$
\pa_z^2 - v(z) - \lambda z^k,
$$
where $z$ is the spectral parameter of the classical KdV system, which
is now viewed as a coordinate on $\pone$, and $v(z)$ is a meromorphic
function on this $\pone$ with poles at $z=0, \infty$, and finitely
many other points (see formula \eqref{final form kdv}). There is
another spectral parameter $\lambda$. When we make a change of
variables $z \mapsto x$, we obtain the differential operators of the
form \eqref{new operator} with the spectral parameter $E$.

For non-simply laced $\g$, the difference between the two spaces is
even more drastic because of the appearance of the Langlands dual
affine algebra $^L \ghat$.

As argued in \cite{FF:sol}, the ``quantum KdV -- affine opers''
duality may be viewed as a generalization of the duality observed in
the generalized Gaudin quantum integrable systems, in which the
spectra of the quantum Hamiltonians in a model associated to a simple
Lie algebra $\g$ turn out to be encoded by $^L\g$-opers
\cite{FFR,F:icmp,FFT}. The latter is explained by the isomorphism
between the center of the completed enveloping algebra of $\ghat$ at
the critical level and the algebra of functions on $^L\g$-opers on the
formal disc \cite{FF:gd} (see \cite{F:book} for an exposition). In
other words, in order to understand the duality between spectra of the
generalized $\g$-Gaudin systems and $^L\g$-opers, we need to use the
affinization of the Lie algebra $\g$: The ``master algebra''
lurking behind the generalized Gaudin quantum integral systems is the
center of the completed enveloping algebra of $\ghat$ at the critical
level, and the fact that it is isomorphic to the algebra of functions
on $^L\g$-opers on the formal disc gives rise to identifications of
the spectra of the $\g$-Gaudin Hamiltonians with $^L\g$-opers of
particular kind.

Therefore it is natural to expect that in order to understand the
``quantum KdV -- affine opers'' duality we need to study the
affinization of $\ghat$; that is to say, a toroidal algebra of $\g$,
but the big open problem here is to figure out what is the analogue of
the ``critical level'' of $\ghat$ and the corresponding center (see
Section 7 of \cite{FF:sol} for a discussion of this point). It may
well be that to do so, one needs to study the ``gerbal representations''
of the toroidal algebra introduced in \cite{FZ}.

{}From this point of view, the ``quantum KdV -- affine opers'' duality
offers us glimpses into the mysterious ``critical level'' structures
arising in toroidal algebras.

\end{document}